\documentclass[12pt]{amsart}
\usepackage{amsmath,amssymb,amsthm,amscd,esint}

\pagestyle{plain}
\setlength{\oddsidemargin}{12pt}
\setlength{\evensidemargin}{12pt}
\setlength{\topmargin}{0pt}
\setlength{\textwidth}{15cm}
\setlength{\textheight}{21.5cm}
\setlength{\parindent}{0.5cm}
\setlength{\parskip}{1ex plus 0.5ex minus0.5ex}

\newtheorem{theorem}{Theorem}[section]
\newtheorem{lemma}[theorem]{Lemma}

\newtheorem{corollary}[theorem]{Corollary}
\newtheorem{proposition}[theorem]{Proposition}

\theoremstyle{definition}
\newtheorem{definition}[theorem]{Definition}

\theoremstyle{remark}
\newtheorem{remark}[theorem]{Remark}

\numberwithin{equation}{section}
\numberwithin{theorem}{section}

\DeclareMathOperator{\vol}{vol} \DeclareMathOperator{\dist}{dist}
 
\DeclareMathOperator{\Hess}{Hess} 
 
\DeclareMathOperator{\diam}{diam}

\newcommand{\Ric}{\mathrm{Ric}}

\newcommand{\be}{\begin{equation}}
\newcommand{\ee}{\end{equation}}
\newcommand{\ba}{\begin{eqnarray}}
\newcommand{\ea}{\end{eqnarray}}
\newcommand{\ban}{\begin{eqnarray*}}
\newcommand{\ean}{\end{eqnarray*}}
\newcommand{\rp}{\rangle}
\newcommand{\lp}{\langle}
\newcommand{\ra}{\rightarrow}
\newcommand{\ID}{\mathrm{ID}}
\newcommand{\IN}{\mathrm{IN}}
\newcommand{\SD}{\mathrm{SD}}
\newcommand{\SN}{\mathrm{SN}}

\begin{document}
\title[Integral Ricci Curvature Bounds]{ Local Sobolev Constant Estimate for  \\
Integral Ricci Curvature Bounds }
\author{Xianzhe Dai}
\address[Xianzhe Dai]{Department of Mathematics, ECNU, Shanghai and UCSB, Santa Barbara CA 93106 \\ email:dai@math.ucsb.edu}
\author{Guofang Wei}
\address[Guofang Wei]{Department of Mathematics, University of California, Santa Barbara CA 93106 \\ email: wei@math.ucsb.edu}
\author{Zhenlei Zhang}
\address[Zhenlei Zhang]{Department of Mathematics, Capital Normal University, China \\ email: zhleigo@aliyun.com}
\date{}
\subjclass{53C20}
\keywords{Integral curvature bounds, Isoperimetric Constant, $L^2$ Hessian estimates.}
\thanks{XD partially supported by the Simons Foundation, NSF and CNSF, GW partially supported by the Simons Foundation and NSF DMS 1506393, ZZ partially supported by NSFC11371256 and NSFC11431009 }
\begin{abstract}
We obtain a local Sobolev constant estimate for integral Ricci curvature, which enables us to extend several important tools such as the maximal principle, the gradient estimate, the heat kernel estimate and the $L^2$ Hessian estimate to manifolds with integral Ricci lower bounds, without the non-collapsing conditions.
\end{abstract}
\maketitle

\section{Introduction}

Integral curvature is a very natural notion as it occurs in diverse situations, for example, the Chern-Gauss-Bonnet formula, the isospectral problem, and numerous variational problems. Moreover, integral curvature bounds have recently been discovered in various geometric situations, such as the $L^2$ bound of the curvature tensor for noncollapsed manifolds with bounded Ricci curvature, and the (almost) $L^4$ bound of the Ricci curvature for the K\"ahler-Ricci flow as well as the (real) Ricci flow (under certain conditions) \cite{Cheeger-Naber, Jiang-Naber, TiZh13, Bamler-Zhang, Simon, TiZh15, Bamler}. In \cite{PeWe97}, the important Laplacian comparison and volume comparison are generalized to integral Ricci lower bound. Combining this with D. Yang's estimate \cite{Ya92} on the local Sobolev constant, the Cheeger-Colding-Naber theory has now been successfully extended to integral Ricci curvature bound in the noncollapsed case, with important consequences \cite{PeWe00, TiZh13}. In the collapsed case a local Sobolev constant estimate was missing. Here we provide the missing piece and extend many of the basic estimates for integral curvature in \cite{PeWe00, TiZh13} to the collapsed case.

For each $x\in M^n$ let $\rho\left( x\right) $ denote the smallest
eigenvalue for the Ricci tensor $\mathrm{Ric}:T_{x}M\rightarrow
T_{x}M,$ and $\Ric_-^H(x) = \left( (n-1)H - \rho
(x)\right)_+ = \max \left\{ 0, (n-1)H - \rho (x) \right\}$, the amount of Ricci curvature lying below $(n-1)H$. Let
\be \| \Ric_-^H \|_{p, R} = \sup_{x\in M} \left( \int_{B\left( x,R\right) } (\mathrm{Ric}_-^H)^{p}\,
  d vol\right)^{\frac 1p}. \ee
Then  $\| \mathrm{Ric}_-^H \|_p $ measures the amount of Ricci
curvature lying below a given bound, in this case,  $(n-1)H$, in the $L^p$ sense. Clearly $\|
\mathrm{Ric}_-^H \|_{p, R} = 0$ iff $\Ric_M \ge (n-1)H$.
It is often convenient to work with the following {\bf scale invariant} curvature quantity (with $H=0$):
\begin{equation} \label{Lp Ricci: average}
k\left(x, p,R\right) = R^2\left( \fint_{B_R\left( x\right) } \rho_-
^{p}\right) ^{\frac{1}{p}}, \  \ \ \ \ \ \ \ \  k\left( p,R\right) = \sup_{x\in M} k\left(x, p,R\right).
\end{equation}

The main result of the paper is
\begin{theorem} \label{local-iso} For $p> n/2$, there exists $\varepsilon = \varepsilon(p,n)>0$ such that if $M^n$ has $k(p,1) \le \varepsilon,$ then for any $x \in M, \  r \le 1$ with $\partial B_1(x) \not= \emptyset$, the normalized Dirichlet isoperimetric constant has the estimate
\begin{equation}\label{isoperimetric: 1}
\ID^*_n (B_{r}(x)) \le  10^{2n+4} r,
\end{equation}
where
$$ \ID^*_n (B_{r}(x)) = \vol(B_{r}(x))^{\frac{1}{n}} \cdot  \sup_{\Omega} \bigg\{\frac{\vol(\Omega)^{1-\frac{1}{n}}}{\vol(\partial\Omega)} \bigg\}.$$
Here the supremum runs over all subdomains  $\Omega\subset B_{r}(x)$ with smooth boundary  and $\partial \Omega \cap \partial B_{r}(x) = \emptyset$.
\end{theorem}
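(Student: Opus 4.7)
The strategy is to adapt Gromov's classical proof of the local isoperimetric inequality under a pointwise Ricci lower bound to the integral-Ricci setting, replacing the pointwise Bishop--Gromov and Laplacian comparisons by their integral counterparts from \cite{PeWe97}. The hypothesis $\partial B_1(x)\neq\emptyset$ provides a reference point $y\in\partial B_1(x)$ from which to foliate, while the smallness of $k(p,1)$ yields volume doubling on scales $\leq 1$ inside $B_1(x)$, so that $\vol(B_r(x))$ serves as a uniform proxy for volumes of comparable balls and collapsing is absorbed into this normalisation.

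The core argument is a \emph{foliation by geodesics emanating from $y$}. Parametrise $B_r(x)$ by polar coordinates $(v,t)\in U_yM\times\mathbb{R}_+$ at $y$, and write the volume element as $J(v,t)\,dt\,dv$. For a domain $\Omega\subset B_r(x)$ with $\partial\Omega\cap\partial B_r(x)=\emptyset$, each ray $t\mapsto\exp_y(tv)$ enters and leaves $\Omega$ along some closed set of times, and the coarea formula gives
\[
\vol(\Omega)=\int_{U_yM}\!\int_{\{t:\exp_y(tv)\in\Omega\}}\!J(v,t)\,dt\,dv,\qquad \vol(\partial\Omega)\geq\int_{U_yM}\sum_{t:\exp_y(tv)\in\partial\Omega}\!J(v,t)\,dv.
\]
Combined with the Bishop-type pointwise bound $J(v,t)\leq c(n)\,t^{n-1}$ derived from the Petersen--Wei integral Laplacian comparison, and with H\"older's inequality applied ray-by-ray together with the doubling of $\vol(B_r(x))$, this should yield $\vol(\Omega)^{(n-1)/n}\cdot\vol(B_r(x))^{1/n}\leq c(n)\,r\,\vol(\partial\Omega)$, exactly the target inequality after tracking the constant.

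The main obstacle is converting the $L^p$ \emph{volume} bound $k(p,1)\leq\varepsilon$ into \emph{pointwise} Jacobian control along individual geodesics: the Jacobi equation along $\exp_y(\cdot\,v)$ depends on $\int_0^t\rho_-\circ\exp_y(sv)\,ds$, which a priori may be large on an unruly set of directions even when the ambient $L^p$ norm is small. I plan to handle this by a Vitali-type covering and maximal-function argument in polar coordinates at $y$, showing that the $\sigma$-measure of ``bad'' directions is quantitatively controlled by $\varepsilon$; discarding those directions and applying the Bishop bound on the remainder will suffice, at the cost of an explicit dimensional factor that I expect to be absorbed into the constant $10^{2n+4}$. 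The critical exponent $p>n/2$ enters precisely here, as the threshold making the relevant radial maximal operator $L^p$-bounded with a summable majorant, which is also where the same hypothesis appears throughout the Petersen--Wei theory.
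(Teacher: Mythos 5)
Your plan has two genuine gaps, and the second one is fatal to the architecture of the argument, not just to a technical step. Foliating $B_r(x)$ by rays from a single fixed point $y\in\partial B_1(x)$ can at best produce a Cheeger-type estimate $\vol(\Omega)\le C(n)\,r\,\vol(\partial\Omega)$: each ray crosses $\partial\Omega$ before entering $\Omega$, the chord length in $\Omega$ is $\le 2r$, and a Jacobian ratio comparison relates interior values of $J$ to its value at the first crossing. But the target \eqref{isoperimetric: 1} is equivalent to $\vol(\Omega)\le 10^{2n+4}\,r\,\bigl(\vol(\Omega)/\vol(B_r(x))\bigr)^{1/n}\vol(\partial\Omega)$, which is strictly stronger precisely when $\vol(\Omega)\ll\vol(B_r(x))$; the extra factor $\bigl(\vol(\Omega)/\vol(B_r(x))\bigr)^{1/n}$ cannot come out of a ray-by-ray H\"older inequality from a basepoint at distance $\approx 1$ together with doubling --- already in $\mathbb{R}^n$ the cone (divergence-theorem) argument from an external vertex only yields the linear Cheeger inequality, and the exponent $1-\frac1n$ requires working at the scale of $\Omega$ itself. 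That localization is exactly what the paper supplies and your proposal lacks: for each $x\in\Omega$ one takes the radius $r_x$ at which $B_{r_x}(x)$ is cut into two equal halves by $H=\partial\Omega$, proves $\vol(B_{r_x}(x))\le 2^{n+3}r_x\vol(H\cap B_{2r_x}(x))$ via Gromov's two-set segment lemma (geodesics joining points of the two halves of the ball, not rays from an outside point), and then sums over a Vitali subcover using volume doubling; this is where $\vol(B_r(x))^{1/n}$ and the exponent appear. Note also that the hypothesis $\partial B_1(x)\neq\emptyset$ is not used in the paper as a vertex for a foliation; via Theorem~\ref{loc-vol-small} it guarantees that small balls occupy at most half of $B_1(x)$, which forces the halving radii $r_x$ to be $\le\frac1{10}$.

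The first gap, which you yourself flag, is the pointwise Jacobian bound $J(v,t)\le c(n)t^{n-1}$ (or its Bishop ratio form) along individual rays: this is exactly what an $L^p$ Ricci hypothesis does not give, and your proposed repair --- a maximal-function argument discarding a set of ``bad'' directions of small measure on $U_yM$ --- is the hardest step of your scheme and is left entirely unproven. Two concrete difficulties: smallness of the bad set would have to be measured against the weighted measure $J\,dt\,dv$, which in the collapsed situation is not comparable to $t^{n-1}dt\,dv$, so the $L^p(d\vol)$ bound on $\rho_-$ does not control the spherical measure of directions along which $\int\rho_-\,ds$ is large; and discarding directions destroys the pairing between the upper bound for $\vol(\Omega)$ and the lower bound $\vol(\partial\Omega)\ge\int_\Gamma J(v,t_0(v))\,dv$, which must run over the same rays. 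The paper avoids pointwise control altogether: it keeps the comparison in integrated form, $\mathcal{A}(\theta,t)\le 2^{n-1}\mathcal{A}(\theta,s)+2^{n-1}\int_s^t\psi\,\mathcal{A}$ for $\tfrac t2\le s\le t$, and bounds the error term by H\"older using the $L^{2p}$ estimate \eqref{Laplacian-com} on $\psi$, yielding an additive error $\vol(B_{2r}(x))\,\|\Ric_-\|^{*\frac12}_{p,B_{2r}(x)}$ that is absorbed by volume doubling once $k(p,1)$ is small (Corollary~\ref{iso-equa-ball}). If you wish to salvage your approach, you must at minimum replace the pointwise Bishop step by such an integrated comparison, but even then the single-point foliation cannot deliver the normalization $\vol(B_r(x))^{1/n}$, so the overall strategy needs to be replaced by a localized (covering) argument of the kind used in the paper.
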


See Section~\ref{D-N-S} for a discussion of isoperimetric constants.

\begin{remark}
The smallness of $k(p,1)$ is necessary. Namely the result is not true if we only assume that $k(p,1)$ is bounded; see Section~\ref{smallness} for detail. Also the result is not true when $p \le \frac n2$  \cite{Aubry}.
\end{remark}

\begin{remark}
 In the presence of the non-collapsing condition $\vol B_r(x) \ge c r^n$, our scale invariant curvature quantity
 $k(x, p, r) \le c^{-1/p} r^{2-\frac np} \|\Ric_-\|_{p, B_r(x)}$, which is always small when $\|\Ric_-\|_{p, B_1(x)}$ is bounded and $r$ is small. This has been very nicely applied in \cite{TiZh13, TiZh15}. Namely when applying  to the study of tangent cones,  with local volume growth, one only needs to assume that $\|\Ric_-\|_{p, B_1(x)}$ is bounded.  Note also that when $k(p,r)$ is small for some $r$, it gives control on $k(p, r)$ for all $r$,  see Remark~\ref{all-r-small} for detail.
\end{remark}

\begin{remark}
In terms of the usual isoperimetric constant, our estimate reads
$$   \sup_{\Omega} \bigg\{\frac{\vol(\Omega)^{1-\frac{1}{n}}}{\vol(\partial\Omega)} \bigg\} \leq \frac{10^{2n+4} r }{\vol(B_{r}(x))^{\frac{1}{n}} }.
$$
We emphasize that it is very important that the volume dependence here is $\vol(B_{r}(x))^{\frac{1}{n}}$.
It is of the right scale invariance, and  corresponds to the optimal Sobolev constant. We note that a local isoperimetric constant estimate is given indenpendently in a recent paper \cite{Rose2016} but with weaker result and under much stronger assumptions.
\end{remark}

From (\ref{normal-Ch}) and (\ref{ID*-SD*}), the theorem above immediately gives
\begin{corollary}
Under the same assumption as in Theorem~\ref{local-iso}, we have the Cheeger's constant $$\ID_\infty (B_{r}(x)) \le  10^{2n+4} r$$ and the Sobolev inequality
\be
\left( \fint_{B_r(x)} f^{\frac{n}{n-1}} \right)^{\frac{n-1}{n}}  \le 10^{2n+4} r  \fint_{B_r(x)} |\nabla f |,   \label{L1-local-sobolev} \ee
for all $f \in C_0^\infty (B_r(x))$ where $r\le 1$.
\end{corollary}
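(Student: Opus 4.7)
The plan is to invoke two standard functional-analytic relations recorded earlier in Section~\ref{D-N-S}: formula (\ref{normal-Ch}), which compares the normalized Cheeger constant $\ID_\infty$ with the normalized Dirichlet isoperimetric constant $\ID^*_n$, and formula (\ref{ID*-SD*}), the classical Federer--Fleming identification of $\ID^*_n$ with the normalized $L^1$-Sobolev constant. Neither relation involves curvature; once they are in place, the corollary is a direct substitution of the bound supplied by Theorem~\ref{local-iso}.

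For the Cheeger bound I would observe that every admissible subdomain $\Omega\subset B_r(x)$ satisfies $\vol(\Omega)\le \vol(B_r(x))$, and hence
\[
\frac{\vol(\Omega)}{\vol(\partial\Omega)} = \vol(\Omega)^{1/n}\cdot\frac{\vol(\Omega)^{1-1/n}}{\vol(\partial\Omega)} \le \vol(B_r(x))^{1/n}\cdot\frac{\vol(\Omega)^{1-1/n}}{\vol(\partial\Omega)}.
\]
Taking the supremum over $\Omega$ yields $\ID_\infty(B_r(x))\le \ID^*_n(B_r(x))$, which is the content of (\ref{normal-Ch}); Theorem~\ref{local-iso} then closes the estimate.

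For the Sobolev inequality I would run the standard layer-cake and coarea argument. Writing $|f|^{n/(n-1)}=\tfrac{n}{n-1}\int_0^\infty t^{1/(n-1)}\chi_{\{|f|>t\}}\,dt$, the integral Minkowski inequality gives
\[
\left(\int_{B_r(x)}|f|^{n/(n-1)}\right)^{(n-1)/n} \le \int_0^\infty \vol(\{|f|>t\})^{(n-1)/n}\,dt.
\]
By Sard's theorem almost every level set $\{|f|>t\}$ is a smooth subdomain of $B_r(x)$ with boundary disjoint from $\partial B_r(x)$, so Theorem~\ref{local-iso} applies on each. After the coarea identity $\int_0^\infty\vol(\partial\{|f|>t\})\,dt=\int|\nabla f|$, one recovers exactly (\ref{L1-local-sobolev}) in its normalized form. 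This is precisely the content of (\ref{ID*-SD*}), and no step poses any real obstacle: the corollary is essentially a restatement of Theorem~\ref{local-iso} through two textbook identifications.
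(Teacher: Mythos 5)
Your proof takes the same route as the paper, which simply cites (\ref{normal-Ch}) and (\ref{ID*-SD*}) and substitutes the bound from Theorem~\ref{local-iso}; you have just unwound those two citations into their classical proofs, which is fine. One small technical slip in the Sobolev part: the weighted layer-cake identity $|f|^{n/(n-1)}=\tfrac{n}{n-1}\int_0^\infty t^{1/(n-1)}\chi_{\{|f|>t\}}\,dt$ is not the correct input for the Minkowski step. To obtain
\[
\left(\int_{B_r(x)}|f|^{n/(n-1)}\right)^{(n-1)/n}\le\int_0^\infty \vol(\{|f|>t\})^{(n-1)/n}\,dt
\]
you should instead start from the \emph{unweighted} identity $|f|=\int_0^\infty\chi_{\{|f|>t\}}\,dt$ and apply Minkowski's integral inequality for the $L^{n/(n-1)}$ norm, which gives precisely $\|f\|_{n/(n-1)}\le\int_0^\infty\|\chi_{\{|f|>t\}}\|_{n/(n-1)}\,dt$. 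With that correction, the Sard/coarea finish and the normalization check are fine, and you only need the one inequality $\SD^*_n\le\ID^*_n$ rather than the full equality in (\ref{ID*-SD*}).
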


See Definition~\ref{ID-alpha} for the definition of Cheeger's constant. By Cheeger's inequality \cite{Cheeger1970}, the first eigenvalue $\lambda_1 \ge \frac{1}{4\, \ID^2_\infty}$. Thus we also obtain an eigenvalue lower bound. We emphasize  that in the above Sobolev inequality we use the averaged integral (volume normalized).

\begin{remark}
Under the pointwise Ricci curvature lower bound, estimates of the type above (namely local or Dirichlet) for Cheeger's constant and isoperimetric constant  are proved in \cite{Buser1982, An92}. For integral Ricci curvature lower bound, D. Yang \cite{Ya92} obtained a local Sobolev constant estimate under the additional assumption that the manifold is noncollaped, see Theorem~\ref{D-Yang}. Paeng \cite{Paeng2011} proved a local Cheeger's constant estimate for integral Ricci curvature under some strong assumption.
\end{remark}

\begin{remark}
When $M$ is closed, the global (Neumann) normalized isoperimetric constant (see Section~\ref{D-N-S} for definition) for integral Ricci curvature was already obtained in \cite{Ga88}, see Theorem~\ref{Gallot-iso}. The proof for global one does not apply to local one here since it uses a result from geometric measure theory which only works for closed manifolds or domains with convex boundary.
\end{remark}

The local Sobolev inequality enables us to obtain many applications. First we can extend the maximal principle and gradient estimate in \cite{PeWe00} to the collapsed case. Namely we have the following maximal principle.
\begin{theorem} \label{Max-prin}
	Let $M$ be an $n$-dimensional Riemannian manifold, and $p>n/2$. There is an $\varepsilon=\varepsilon \left(n,p\right) >0$ and $C=C\left( n,p,q\right) >1$ such that if $k(p,1) \leq \varepsilon$ and $R\leq 1$ then any function $%
	u:\Omega \subset B\left( x,R\right) $ $\rightarrow \mathbb{R}$ with $\Delta
	u\geq f$ satisfies
	\begin{equation*}
	\sup_{\Omega }u\leq \sup_{\partial \Omega }u+R^{2}\cdot C\cdot \left\|
	f_-\right\|^*_{q,\Omega},
	\end{equation*}
	for any $q>\frac n2.$ Here the normalized $L^q$ norm $\left\|
	f_-\right\|^*_{q,\Omega}$ is introduced at the beginning of the next section.
\end{theorem}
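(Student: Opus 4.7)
The plan is to run the De Giorgi--Moser iteration exactly as in Petersen--Wei \cite{PeWe00}, the only change being to replace D.~Yang's noncollapsed Sobolev estimate by the volume-normalized local Sobolev inequality (\ref{L1-local-sobolev}), which by Theorem~\ref{local-iso} is now available without any non-collapsing hypothesis. After translating $u$ we may assume $\sup_{\partial\Omega} u = 0$; then $v := u_+$, extended by zero outside $\Omega$, is a nonnegative function in $H^1_0(B(x,R))$ satisfying $\Delta v \geq -f_-$ weakly on $\{v > 0\}$, and the goal becomes a sup-bound on $v$ in terms of $\|f_-\|^*_{q,\Omega}$.

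The first step upgrades (\ref{L1-local-sobolev}) to its $L^2$-form: applying (\ref{L1-local-sobolev}) to $h^{2(n-1)/(n-2)}$ and following with Cauchy--Schwarz yields
$$\Bigl(\fint_{B_R(x)} h^{\frac{2n}{n-2}}\Bigr)^{\frac{n-2}{n}} \leq C(n)\, R^2 \fint_{B_R(x)} |\nabla h|^2, \qquad h \in C_0^\infty(B_R(x)).$$
The second step is a Caccioppoli-type identity: for each $k \geq 1$, pairing $\Delta u \geq f$ with the nonnegative test function $v^{2k-1}$ and integrating by parts (using $v\equiv 0$ on $\partial\Omega$) gives
$$\frac{2k-1}{k^2} \int_\Omega |\nabla v^k|^2 \,\leq\, \int_\Omega v^{2k-1} f_-.$$
H\"older's inequality $\int v^{2k-1} f_- \leq \|v^{2k-1}\|_{q/(q-1)}\|f_-\|_q$ combined with the $L^2$-Sobolev then produces the Moser-type recursion
$$\bigl(\|v\|^*_{2k\chi,\,B_R}\bigr)^{2k} \leq \frac{C(n)\, k^2 R^2}{2k-1}\,\bigl(\|v\|^*_{(2k-1)q',\,B_R}\bigr)^{2k-1}\,\|f_-\|^*_{q,\Omega},$$
with $\chi = n/(n-2)$ and $q' = q/(q-1)$.

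The assumption $q > n/2$ is exactly what forces $q' < \chi$, so the ratio of output to input exponent, $2k\chi/((2k-1)q')$, exceeds some fixed $\lambda > 1$ once $k$ is large enough; iterating along a geometric sequence of exponents starting from $p_0 = q'$ and summing the resulting convergent series $\sum_j p_j^{-1} \log(C k_j)$, then absorbing the residual $\|v\|^*$-factor via Young's inequality, yields
$$\sup_\Omega v \leq C(n,p,q)\, R^2\, \|f_-\|^*_{q,\Omega},$$
which is the desired bound after undoing the translation. No new geometric input is needed beyond Theorem~\ref{local-iso}; the principal difficulty is purely bookkeeping, namely ensuring that the infinite product $\prod_j (C k_j^2/(2k_j-1))^{1/p_j}$ converges uniformly in $n,p,q$ and that the absorption step (or equivalently a starting step at $k_0 = q'/2$) closes the estimate with a constant depending only on $n,p,q$. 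Since the Sobolev constant produced by Theorem~\ref{local-iso} has the optimal $R^2$ scaling and no residual volume factor, the final bound carries the correct power $R^2$ advertised in the statement.
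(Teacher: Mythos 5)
Your argument is correct and is in substance the proof the paper relies on: the paper simply quotes the abstract maximal principle of Petersen--Wei \cite[Corollary 3.2]{PeWe00} (itself established by exactly this Moser iteration against the normalized Sobolev constant $C_s(\Omega)$) and combines it with the new estimate (\ref{L2-Sobolev-r}), i.e. $C_s(\Omega)\le C(n)R$ for $\Omega\subset B_R(x)$. Your proposal just unpacks that citation inline, so it is essentially the same approach.
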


Also we have the gradient estimate.

\begin{theorem} \label{gradient}
	Let $M$ be an $n$-dimensional Riemannian manifold, and $p>n/2$. There is an $\varepsilon \left(
	n,p\right) >0$ and $C\left( n,p\right) >1$ such that if $k(p,1) \leq \varepsilon$ and $R\leq 1$ and $u$ is a function on  $B_1(x)$ satisfying $$\Delta u = f, $$  then
	\be
	\sup_{B_{\frac{R}{2}}(x)}|\nabla u|^2 \leq C(n,p) R^{-2}  \left[ (\|u\|_{2, B_R(x)}^* )^2 + (\|f\|_{2p, B_R(x)}^*)^2\right].
	\ee
\end{theorem}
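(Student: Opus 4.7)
The plan is to carry out a Moser iteration for $v := |\nabla u|^2$, starting from the Bochner identity
\[
\tfrac12\,\Delta v \;=\; |\Hess u|^2 + \Ric(\nabla u,\nabla u) + \langle\nabla u,\nabla f\rangle,
\]
which, together with $|\Hess u|^2 \ge f^2/n$ and $\Ric(\nabla u,\nabla u)\ge -\rho_-\,v$, yields the weak differential inequality
\[
\Delta v \;\ge\; -\,2\rho_-\,v + 2\langle\nabla u,\nabla f\rangle + \tfrac{2}{n}f^2.
\]
Once this is in hand, I would proceed exactly as in the classical De Giorgi--Nash--Moser scheme, using the local Sobolev inequality supplied by the Corollary to Theorem~\ref{local-iso} in the form
\[
\Bigl(\fint_{B_r} |\varphi|^{\frac{2n}{n-2}}\Bigr)^{\frac{n-2}{n}}
\;\le\; C(n)\, r^{2}\,\fint_{B_r}|\nabla\varphi|^{2},\qquad \varphi\in C_0^\infty(B_r),
\]
obtained from the $L^1$ Sobolev inequality by the standard substitution $\varphi\rightsquigarrow \varphi^{\frac{2(n-1)}{n-2}}$.

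The core step is the Caccioppoli estimate: for $\beta\ge 1$ and a cutoff $\phi$ with $\phi\equiv 1$ on $B_{r'}$, $\phi\equiv 0$ outside $B_{r}$, testing the Bochner inequality against $\phi^{2}v^{\beta-1}$ and integrating by parts produces
\[
\int \phi^{2}|\nabla v^{\beta/2}|^{2}
\;\lesssim\; \beta^{2}\!\int v^{\beta}|\nabla\phi|^{2}
\;+\;\beta\!\int \phi^{2}\rho_{-}\,v^{\beta}
\;+\;\beta\!\int \phi^{2} v^{\beta-1}|f|^{2} + \text{l.o.t.}
\]
The Ricci error is controlled by H\"older's inequality, using $p>n/2$: writing $\tfrac{p}{p-1}<\tfrac{n}{n-2}$, one bounds
\[
\int \phi^{2}\rho_{-}v^{\beta} \;\le\; \bigl(R^{-2p}k(x,p,R)\cdot \vol B_{R}\bigr)^{1/p}\,\bigl\|\phi^{2}v^{\beta}\bigr\|_{L^{p/(p-1)}},
\]
and then interpolating $L^{p/(p-1)}$ between $L^{1}$ and $L^{n/(n-2)}$. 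Plugging in the Sobolev inequality and using the smallness of $k(p,1)\le\varepsilon$, this term is absorbed into the Dirichlet term on the left. The result is a reverse H\"older relation
\[
\bigl(\|v\|_{\beta\chi,B_{r'}}^{*}\bigr)^{\beta} \;\le\; (C\beta)^{\,2}\bigl[(r-r')^{-2}(\|v\|_{\beta,B_{r}}^{*})^{\beta} + (\|f\|_{2p,B_{R}}^{*})^{2\beta}\bigr],
\]
with $\chi=n/(n-2)$; iterating $\beta_{i}=\chi^{i}$ over a geometric sequence of radii between $R/2$ and $R$ converts the $L^{1}$ bound on $v$ into the desired $L^\infty$ bound
\[
\sup_{B_{R/2}} v \;\lesssim\; R^{-2}\,\|v\|_{1,B_{R}}^{*} + (\|f\|_{2p,B_{R}}^{*})^{2}.
\]

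Finally, the $L^{1}$ norm of $v$ must be traded for $\|u\|_{2,B_{R}}^{*}$ and $\|f\|_{2p,B_{R}}^{*}$. For this I would choose a cutoff $\psi$ supported in $B_{R}$ with $\psi\equiv 1$ on $B_{3R/4}$ and integrate by parts in $\int \psi^{2}|\nabla u|^{2} = -\int \psi^{2}u\,f - 2\int \psi\,u\,\langle\nabla\psi,\nabla u\rangle$, then apply Cauchy--Schwarz and Young to obtain $\|v\|_{1,B_{3R/4}}^{*}\lesssim R^{-2}(\|u\|_{2,B_{R}}^{*})^{2}+(\|u\|_{2,B_{R}}^{*})(\|f\|_{2,B_{R}}^{*})$; combining with a preliminary run of the Moser iteration on slightly larger balls closes the estimate. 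The main obstacle is the absorption step for the $\rho_{-}$ term: without a pointwise Ricci lower bound, the term $\int\phi^{2}\rho_{-}v^{\beta}$ cannot be controlled by volume alone, and one must carefully exploit that $p>n/2$ gives a strict improvement over the Sobolev exponent so that smallness of $k(p,1)$ actually suffices. It is precisely here that Theorem~\ref{local-iso} is indispensable—in the collapsed case the Yang-type $L^{2}$ Sobolev estimate was unavailable before, and its presence makes the entire Moser scheme go through.
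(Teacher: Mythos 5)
Your overall strategy coincides with the paper's: Bochner formula for $|\nabla u|^2$, Moser iteration powered by the local $L^2$ Sobolev inequality coming from Theorem \ref{local-iso}, integration by parts to trade $\lp\nabla u,\nabla f\rp$ for $f^2$ terms, H\"older plus absorption for the $\rho_-$ potential, and a final energy identity converting the $L^1$ norm of $|\nabla u|^2$ into $(\|u\|^*_{2})^2$ and $\|f\|$ terms. Two steps, however, have genuine gaps as written. First, your Caccioppoli inequality ``for $\beta\ge 1$ with constant $\beta^2$'' cannot be obtained by testing against $\phi^2v^{\beta-1}$ when $\beta=1$: integration by parts gives $-(\beta-1)\int\phi^2v^{\beta-2}|\nabla v|^2-2\int\phi v^{\beta-1}\lp\nabla\phi,\nabla v\rp$, so the Dirichlet term carries the factor $\beta-1$ and degenerates as $\beta\to 1$, while your iteration must start at $\beta_0=1$ because the only a priori information is the $L^1$ bound on $v$. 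The paper treats the base case separately, using $|\nabla(\eta v^{1/2})|^2\le 2v|\nabla\eta|^2+2\eta^2|\Hess u|^2$ (Kato) and bounding $\int\eta^2|\Hess u|^2$ through the equation and the Bochner formula; alternatively you could iterate from a fixed $\beta_0>1$ and descend to the $L^1$ norm by the Li--Schoen absorption trick, which is available thanks to volume doubling. Either repair is needed and neither appears in your outline. (For $\beta>1$ your test-function route avoids $\Delta\phi$, hence also the Laplacian-comparison error term $\psi$ that the paper must estimate because it uses a radial cutoff and keeps $\Delta\eta$; that part of your plan is fine and in fact slightly cleaner.)

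Second, the passage from the term $\beta\int\phi^2v^{\beta-1}|f|^2$ to the additive $(\|f\|^*_{2p,B_R})^{2\beta}$ in your reverse H\"older relation is not justified: a naive Young inequality would require $f\in L^{2\beta}$ for every $\beta=\chi^i\to\infty$, which you do not have. The standard device, and the one the paper uses, is to iterate with $v=|\nabla u|^2+\|f^2\|^*_{p}$ rather than $v=|\nabla u|^2$; then $f^2v^{\beta-1}\le f^2v^{\beta}/\|f^2\|^*_{p}$, and this term is handled exactly like the $\rho_-$ potential, by H\"older with exponent $\frac{p}{p-1}$, interpolation between $L^1$ and $L^{n/(n-2)}$, and absorption into the Sobolev term with a $\beta$-dependent Young parameter. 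Note also that the absorption itself does not need smallness of $k(p,1)$; smallness is what, via Theorem \ref{local-iso} and volume doubling, turns the resulting constants (Sobolev constant $\le C(n)R$, volume ratios, $C_s^2\|\Ric_-\|^*_p$ bounded) into a constant depending only on $n,p$, which is how the paper deduces Theorem \ref{gradient} from the general estimate of Theorem \ref{gradient-g}. With these two repairs your argument becomes essentially the paper's proof.
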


With the (relative) local Sobolev constant estimate (\ref{L1-local-sobolev}), one gets heat kernel upper bound, see e.g. \cite[(2.17)]{Hesisch-Saloff-Coste2001}. With this and the volume doubling (\ref{volume doubling}), Zhang-Zhu obtained Li-Yau's gradient estimate
\cite{Zhang-Zhu2016}. Hence one has
parabolic Harnack inequality and local Li-Yau heat kernel lower bound, see Theorem~\ref{heat-bounds}. Consequently we derive the following mean value inequality, extending the one in \cite{TiZh13} to the collapsed case.

\begin{theorem} For any integer $n$ and $p>\frac{n}{2}$ there exist $\varepsilon=\varepsilon \left(
	n,p\right) >0$ and $C=C\left( n,p\right) >1$ such that the following holds. Given $M$ a complete $n$-dimensional Riemannian manifold satisfying $k(p,1)\le\varepsilon$, let $u$ be a nonnegative function satisfying
	\[
	\frac{\partial}{\partial t} u \ge \Delta u - f,\]
	where $f$ is a nonnegative space-time function,
	then,  for $q > \frac n2$,
	\be
	\fint_{B_{\frac 12 r} (x)} u(\cdot, 0) d \vol  \le C u(x, r^2) + C(n,p,q)\,  r^2 \sup_{t \in [0, r^2]}  \|f(t)\|^*_{q, B_r(x)}
	\ee
	for all $x \in M$, $r \le 1 $.
\end{theorem}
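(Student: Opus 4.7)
The plan is to use the classical Duhamel strategy, which is now available in the collapsed setting because the local Sobolev estimate of Theorem~\ref{local-iso} unlocks the Li-Yau heat kernel bounds, volume doubling, and parabolic Harnack inequality recorded in Theorem~\ref{heat-bounds}. Let $H(x,y,t)$ denote the heat kernel on $M$.

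First I would test the differential inequality $\partial_t u \ge \Delta u - f$ against the backward heat kernel. Setting
\[ \phi(t) := \int_M u(y,t)\, H(x,y,r^2-t)\, dy, \qquad 0 \le t < r^2, \]
and using $\partial_s H = \Delta_y H$ together with integration by parts (justified on non-compact $M$ by a routine cut-off argument), one computes $\phi'(t) \ge -\int_M f(y,t)\, H(x,y,r^2-t)\, dy$. Since $\phi(t) \to u(x,r^2)$ as $t \nearrow r^2$ by the delta-function property of the heat kernel, integrating in $t$ yields the Duhamel inequality
\[ \int_M u(y,0)\, H(x,y,r^2)\, dy \;\le\; u(x,r^2) + \int_0^{r^2}\!\!\int_M H(x,y,r^2-t)\, f(y,t)\, dy\, dt. \]

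For the left-hand side I would invoke the local Li-Yau heat kernel lower bound $H(x,y,r^2) \ge c/\vol(B_r(x))$ for $y \in B_{r/2}(x)$ from Theorem~\ref{heat-bounds}, combined with volume doubling, yielding
\[ \fint_{B_{r/2}(x)} u(\cdot,0)\, d\vol \;\le\; C \int_M u(y,0)\, H(x,y,r^2)\, dy, \]
which reduces the proof to bounding the double $f$-integral on the right.

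For that integral I would split the spatial domain at $\partial B_r(x)$ and, on $B_r(x)$, apply H\"older with exponents $q$ and $q/(q-1)$. The Gaussian upper bound $H(x,y,s) \le C/\vol(B_{\sqrt{s}}(x))$ combined with $\int_M H \le 1$ yields $\|H(x,\cdot,s)\|_{q/(q-1),B_r(x)} \le C/\vol(B_{\sqrt{s}}(x))^{1/q}$. Rewriting $\|f\|_{q,B_r(x)} = \vol(B_r(x))^{1/q}\|f\|^*_{q,B_r(x)}$ and invoking the relative volume comparison $\vol(B_r(x))/\vol(B_{\sqrt{s}}(x)) \le C(r/\sqrt{s})^n$ (available under $k(p,1)\le\varepsilon$) reduces matters to the scalar estimate $\int_0^{r^2}(r^2-t)^{-n/(2q)}\,dt < \infty$, which is precisely where the hypothesis $q > n/2$ enters. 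The contribution from outside $B_r(x)$ is absorbed using the Gaussian decay of $H$, or bypassed by first truncating $f$ to $B_r(x)$ and controlling the error via Theorem~\ref{Max-prin}. The hard part is this singular endpoint $t = r^2$: both the sharp on-diagonal heat kernel bound and the relative volume comparison are needed to make the time integral converge, and the threshold $q > n/2$ is forced by the integrability of this singularity.
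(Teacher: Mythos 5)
Your overall architecture (Duhamel pairing with a backward heat kernel, Hölder in space with exponents $q$ and $q/(q-1)$, the on-diagonal bound plus volume doubling to produce the $(r^2-t)^{-n/(2q)}$ singularity, and $q>\tfrac n2$ for time-integrability, together with the Li--Yau lower bound for the left-hand side) is exactly the skeleton of the paper's proof of Proposition~\ref{p-mean}. But there is a genuine gap at the one point the paper flags as the crux: you work with the global heat kernel $H(x,y,t)$ on $M$, whereas the conclusion only involves the \emph{local normalized} norm $\|f(t)\|^*_{q,B_r(x)}$, and $f$ is merely a nonnegative space-time function with no control whatsoever outside $B_r(x)$. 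Your off-ball term $\int_0^{r^2}\int_{M\setminus B_r(x)}H(x,y,r^2-t)f(y,t)\,dy\,dt$ cannot be ``absorbed using the Gaussian decay of $H$'': Gaussian decay is useless against a source that may be arbitrarily large (or non-integrable) there. The fallback of ``truncating $f$ to $B_r(x)$'' also fails, because after truncation the inequality $\partial_t u\ge\Delta u-f\chi_{B_r(x)}$ no longer holds on $M\setminus B_r(x)$, so the global Duhamel computation is not available, and the elliptic maximum principle Theorem~\ref{Max-prin} does not repair a parabolic argument with an uncontrolled exterior source. A secondary issue with the global route is that $\phi(t)=\int_M uH$ and the integration by parts at infinity are not ``routine'' for a function $u$ that is only assumed nonnegative: without growth hypotheses, $\phi$ need not even be finite.

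The paper's resolution, and the reason it proves the two-sided \emph{Dirichlet} heat kernel bounds for $H^{B_r(x)}$ beforehand, is to run your computation entirely inside $B_r(x)$ with the Dirichlet heat kernel of the ball: the Dirichlet boundary condition (with $u\ge 0$, so the boundary term $\int_{\partial B_r(x)}u\,\partial_\nu H^{B_r(x)}$ has a favorable sign) makes $t\mapsto\int_{B_r(x)}u(y,t)H^{B_r(x)}(x,y,r^2-t)\,dy$ satisfy the differential inequality with only the local integral of $f$ on the right, it eliminates all behavior of $u$ and $f$ outside the ball, and the Dirichlet kernel's upper bound $H^{B_r(x)}\le C\vol(B_{\sqrt{t}}(x))^{-1}e^{-d^2/5t}$ and interior lower bound $H^{B_r(x)}(x,y,r^2)\ge C^{-1}\vol(B_r(x))^{-1}$ on $B_{r/2}(x)$ then make your Hölder/doubling/time-integration steps go through verbatim. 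So the estimate chain you propose is sound, but it must be applied to $H^{B_r(x)}$ rather than $H$; as stated, your proof does not yield the claimed bound with the local norm of $f$.
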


With the above tools at our disposal, we can then extend the
$L^2$ Hessian estimate for parabolic approximation of Colding-Naber to integral curvature, see Section~\ref{application} for detail. In the noncollapsed case it is established in \cite{TiZh13}, see also \cite{Zhang-Zhu15}.

We expect further applications of our results e.g. to the Cheeger-Colding-Naber theory, which will be discussed in a future paper.

{\em Acknowledgment} The second author would like to thank Qi Zhang for his interest and helpful conversations.  We also would like to thank 
Christian Rose for pointing out lapses in the argument of heat kernel lower bound in the earlier version.  
\section{Preliminary}

In this section we fix notations and recall the previous work \cite{PeWe97}, \cite{PeWe00} that will play a fundamental role here. We also give a review on the isoperimetric and Sobolev constants and their relations,  and introduce the normalized version.

For functions $f$ on M, the $L^p$ norm and normalized $L^p$ norm on a ball $B(x, r) \subset M$ is denoted
\[
\|f\|_{p, B(x,r)} =\left(\int_{B(x, r)}  |f|^p\right)^{\frac{1}{p}}, \ \ \ \ \|f\|_{p, B(x, r)}^*= \left(\fint_{B(x,r)}  |f|^p\right)^{\frac{1}{p}}.
\]
(The notation of the volume form of $g$ is  often omitted in this paper.) $\|f\|_p,  \|f\|_p^*$ denote the norm, normalized norm of $f$ on $M$.

\subsection{Volume Comparison for Integral Curvature}

For simplicity, we state the case when $H= 0$.
Let $M^n$ be a complete Riemannian manifold of dimension $n$. Given $x \in M$, let $r(y) = d(y, x)$ be the distance function and $\psi (y) = \left( \Delta r - \frac{n-1}{r} \right) _{+}$. The classical Laplacian comparison states that if $\Ric_M \ge 0$, then $\Delta r \le \frac{n-1}{r}$, i.e., if $\Ric_- \equiv 0$, then $\psi  \equiv 0$. In \cite{PeWe97} this is generalized to integral Ricci lower bound.

\begin{theorem}[Laplacian and Volume Comparison \cite{PeWe97, PeWe00}]
Let $M^n$ be a complete Riemannian manifold of dimension $n$. If $p>\frac{n}{2}$, then
\be \label{Laplacian-com}
\| \psi \|_{2p. B(x,r)}  \leq \left(\frac{(n-1)(2p-1)}{2p-n} \|
\mathrm{Ric}_- \|_{p, B(x, r)} \right)^{\frac 12}. \ee
Equivalently
\be
\label{Laplacian-com-average-norm}
\| \psi \|^*_{2p. B(x,r)}  \leq \left(\frac{(n-1)(2p-1)}{2p-n} \|
\mathrm{Ric}_- \|^*_{p, B(x, r)} \right)^{\frac 12}. \ee

Consequently we have the following volume comparison estimate: for any $r_2\geq r_1>0$,
\begin{equation}\label{relativevolume: 1}
\bigg(\frac{\vol(B_{r_2}(x))}{r_2^n}\bigg)^{\frac{1}{2p}}-\bigg(\frac{\vol(B_{r_1}(x))}{r_1^n}\bigg)^{\frac{1}{2p}}\leq C(n,p)r_2^{1-\frac{n}{2p}}\left( \|Ric_-\|_{p, B(x, r_2)}\right)^{\frac{1}{2}}.
\end{equation}
In other words,
\begin{equation}\label{relativevolume: 3}
\bigg(\frac{\vol(B_{r_1}(x))}{\vol(B_{r_2}(x))}\bigg)^{\frac{1}{2p}}\geq \bigg(\frac{r_1}{r_2}\bigg)^{\frac{n}{2p}}
\bigg[1-C(n,p)\left( k(x,p,r_2) \right)^{\frac{1}{2}}\bigg],
\end{equation}
where $C(n,p)$ is a  constant depending on $n,p$.
Hence there exists $\varepsilon_0=\varepsilon_0(p,n)>0$ such that, if $k(x, p, r_0)  \le\varepsilon_0,$
 then
\begin{equation}\label{volume doubling}
\frac{\vol(B_{r}(x))}{\vol(B_{r_0}(x))}\ge\frac{1}{2}\bigg(\frac{r}{r_0}\bigg)^{n},\,\forall r<r_0.
\end{equation}
\end{theorem}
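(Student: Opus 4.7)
The plan is to establish the Laplacian comparison (\ref{Laplacian-com}) first from a Riccati-type differential inequality along radial geodesics, then deduce the volume comparison (\ref{relativevolume: 1}) via the Jacobian of the exponential map, and finally extract (\ref{volume doubling}) as an algebraic consequence of (\ref{relativevolume: 3}).

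For the Laplacian estimate I would fix $x$ and work along a unit-speed minimal geodesic $\gamma(t)$ issuing from $x$. Setting $m(t)=\Delta r(\gamma(t))$ and $\bar m(t)=(n-1)/t$, the Bochner formula together with the Cauchy--Schwarz bound $|\Hess r|^{2}\ge (\Delta r)^{2}/(n-1)$ yields the Riccati inequality $m'+m^{2}/(n-1)\le -\Ric(\gamma',\gamma')$, while the model satisfies $\bar m'+\bar m^{2}/(n-1)=0$. Subtracting and setting $\psi=(m-\bar m)_{+}$ produces
$$
\psi' + \frac{2\bar m}{n-1}\,\psi + \frac{\psi^{2}}{n-1} \le \Ric_{-}.
$$
I would then multiply through by a suitable power $\psi^{\alpha}$ and by the Jacobian weight $r^{n-1}$, arranging the exponents so that the positive $\psi^{2}$ and $\bar m\psi$ contributions absorb the term coming from the derivative of $r^{n-1}$, and apply Hölder to the resulting $\int r^{n-1}\psi^{\alpha}\Ric_{-}$. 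Integrating over the unit sphere of directions at $x$ then produces the $L^{2p}$ bound (\ref{Laplacian-com}); the constant $(n-1)(2p-1)/(2p-n)$ emerges from the exponent algebra, and the condition $p>n/2$ is exactly what keeps it finite.

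For the volume comparison I would write
$$
\vol(B_{r}(x)) = \int_{S^{n-1}} \int_{0}^{\min(r,c(\theta))} \mathcal{A}(t,\theta)\,dt\,d\theta,
$$
where $\mathcal{A}$ is the Jacobian of $\exp_{x}$ and $c(\theta)$ the cut value in direction $\theta$. Since $\partial_{t}\log\mathcal{A}=\Delta r\le \bar m+\psi$ in the barrier sense, the quantity $(\mathcal{A}(t,\theta)/t^{n-1})^{1/(2p)}$ satisfies a first-order differential inequality whose source term is proportional to $\psi\cdot(\mathcal{A}/t^{n-1})^{1/(2p)}$. Integrating in $t$ from $r_{1}$ to $r_{2}$, then over $S^{n-1}$, applying Hölder with (\ref{Laplacian-com}), and using the elementary inequality $(a+b)^{1/(2p)}\le a^{1/(2p)}+b^{1/(2p)}$ (valid since $2p\ge 1$) to compare $(\vol B_{r})^{1/(2p)}$ to the spherical integral of $\mathcal{A}^{1/(2p)}$, yields (\ref{relativevolume: 1}). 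The form (\ref{relativevolume: 3}) is then a rearrangement, and (\ref{volume doubling}) follows by applying (\ref{relativevolume: 3}) with $r_{1}=r<r_{2}=r_{0}$, raising to the $2p$-th power, and choosing $\varepsilon_{0}$ so small that $[1-C(n,p)\sqrt{\varepsilon_{0}}\,]^{2p}\ge 1/2$.

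The principal technical obstacle is the first step: identifying the correct weighted power of $\psi$ so that every positive error term has the right sign after the $r^{n-1}$ integration, and tracking the sharp exponent algebra that reproduces the constant $(n-1)(2p-1)/(2p-n)$ together with the integrability threshold $p>n/2$. The second step needs only a routine cut-locus truncation (the barrier inequality for $\Delta r$ is preserved past cut points in the distributional sense) and the concavity estimate noted above; the third is purely algebraic.
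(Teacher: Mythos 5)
Your overall architecture is the standard Petersen--Wei one (the paper does not prove this theorem itself; it quotes \cite{PeWe97, PeWe00}), and the Riccati set-up in step 1, the use of $\partial_t(\mathcal{A}/t^{n-1})\le\psi\,\mathcal{A}/t^{n-1}$ in step 2, and the purely algebraic step 3 are all the right ingredients. However, two specific points in your sketch do not work as written. First, in step 1 you propose to weight by $r^{n-1}$. The norm in (\ref{Laplacian-com}) is taken with respect to the Riemannian volume of $B(x,r)$, so the correct weight is the actual area density $\mathcal{A}(t,\theta)$ of $\exp_x$, whose logarithmic derivative is $\Delta r=\psi+\frac{n-1}{t}$ on the support of $\psi$; it is precisely the extra $\psi\cdot\psi^{2p-1}\mathcal{A}$ term coming from this derivative that must be absorbed by the $\psi^2/(n-1)$ term of the Riccati inequality, and this is where the coefficient $\frac{2p-1}{n-1}-1=\frac{2p-n}{n-1}$, hence the constant $\frac{(n-1)(2p-1)}{2p-n}$ and the threshold $p>n/2$, come from. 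With the weight $t^{n-1}$ your computation closes (only the $\bar m\psi$ term is needed), but it yields an $L^{2p}$ bound for $\psi$ with respect to the cone measure $t^{n-1}dt\,d\theta$, with a different constant; that measure is not comparable to $d\mathrm{vol}=\mathcal{A}\,dt\,d\theta$ in the absence of curvature bounds, so the stated inequality does not follow.

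Second, in step 2 the bridge you invoke --- using $(a+b)^{1/(2p)}\le a^{1/(2p)}+b^{1/(2p)}$ to compare $(\vol B_r)^{1/(2p)}$ with the spherical integral of $\mathcal{A}^{1/(2p)}$ --- is the false continuous analogue of termwise subadditivity: for integrals one has in general $\bigl(\int f\,d\mu\bigr)^{1/(2p)}\not\le\int f^{1/(2p)}d\mu$ (take $f\equiv 1$ on a set of small measure), and this is exactly the direction you need. The correct route, as in \cite{PeWe97}, is to integrate the pointwise inequality for $\mathcal{A}/t^{n-1}$ first to the level of volumes, obtaining a differential inequality of the form $\frac{d}{dr}\bigl(\frac{\vol B_r}{r^n}\bigr)\le C\,r^{-1-\frac{n}{2p}}\,\|\psi\|_{2p,B_r}\bigl(\frac{\vol B_r}{r^n}\bigr)^{1-\frac{1}{2p}}$ by applying H\"older over the ball (here the convergence of the auxiliary integral again uses $p>n/2$), and only then pass to the $\frac{1}{2p}$-th power of the volume ratio and integrate in $r$ to get (\ref{relativevolume: 1}); combined with the Laplacian estimate (\ref{Laplacian-com}) this gives (\ref{relativevolume: 3}), and your derivation of (\ref{volume doubling}) from it is fine.
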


\begin{remark}  \label{all-r-small}
As pointed out in \cite[Section 2.3]{PeWe00},  if $k(x,p, r_2) \le \varepsilon_0$ for the $\varepsilon_0$ above, then (\ref{volume doubling}) implies,
\begin{equation}  \label{k-2-1}
k(x,p, r_1)  \leq 2^{1/p} \left(
\frac{r_{1}}{r_{2}}\right) ^{2-\frac{n}{p}}\cdot k(x,p, r_2) \le 2^{1/p} k(x,p, r_2),      \ \ \  \forall  r_1 \le r_2.
\end{equation}
Hence $k(x,p,r_1) \ra 0$ as $r_1 \ra 0$ and $k(x,p,r_1) \le \varepsilon_0(p,n)$ when $r_1 \le 2^{\frac{1}{n-2p}} r_2$. On the other hand, when $k(p,r_1) \le \varepsilon_0(p,n)$, then \be
k(p, r_2) \le 2^{\frac{n+1}{p}} \left(
\frac{r_{2}}{r_{1}}\right)^2 k(p, r_1) \ \ \mbox{for all} \   r_2 \ge r_1.   \label{k-1-2}  \ee
 Hence when $k(p,r)$ is small for some $r$, it gives control on $k(p, r)$ for all $r$.
\end{remark}

 Note also that if one
has a lower bound for the Ricci curvature $\mathrm{Ric}\geq \left(
n-1\right) H$ then the quantity $k\left( p,R\right) $
will be small for sufficiently small $R.$ Namely general lower bound can be reduced to zero lower bound in the local analysis.

\subsection{Dirichlet and Neumann Isoperimetric and Sobolev Constants}
\label{D-N-S}
In this subsection we review the definitions of the isoperimetric and Sobolev constants and their relations, and introduce the normalized form. For details, see \cite{Li, Chavel}, though we use a different convention here.

\begin{definition}  \label{ID-alpha}
For a complete noncompact Riemannian manifold $M^n$ or a compact Riemannian manifold $M^n$ with $\partial M \not= \emptyset$,  for $n \le \alpha \le \infty$,  the Dirichlet (also referred as local) $\alpha$-isoperimetric constant of $M$ is defined by
\[
\ID_\alpha (M) = \sup_{\Omega}  \frac{ \vol (\Omega)^{1-\frac{1}{\alpha}} }{\vol (\partial \Omega )}, \]
where $\Omega$ is an open submanifold of $M$ with compact closure and smooth boundary such that $\partial \Omega \cap \partial M = \emptyset$.
\end{definition}

When $\alpha = n$, $\ID_n (M)$ is scale invariant. When $\alpha = \infty$, this is Cheeger's constant \cite{Cheeger1970}, which scales like $\vol ^{1/n}$. The Dirichlet $\alpha$-isoperimetric constant controls the local volume growth: for given a geodesic ball $B(x, r) \subset M$,  $\vol B(x, r) \ge \left( \frac{r}{\alpha \, \ID_\alpha (M)} \right)^\alpha$ for $n \le \alpha < \infty$.

\begin{definition}
The Dirichlet $\alpha$-Sobolev constant of $M$ is defined by
\[
\SD_\alpha (M) = \sup_{f} \frac{\|f\|_{\frac{\alpha}{\alpha-1}} }{ \|\nabla f\|_1}, \]
where $f$ ranges over $C_c^\infty (M)$.
\end{definition}

\begin{definition}
When $M$ is compact with or without boundary, the Neumann $\alpha$-isoperimetric constant of $M$ is defined by
\[
\IN_\alpha (M) = \sup_{\Gamma}  \frac{ \min \{ \vol (D_1), \vol (D_2) \}^{1-\frac{1}{\alpha}} }{\vol (\Gamma )}, \]
where $\Gamma$ varies over compact $(n-1)$-dim submanifold of $M$ which divide $M$ into two disjoint open submanifolds $D_1, D_2$ of $M$.
\end{definition}

From the definition, if $\Omega \subset M$,  $\partial \Omega \cap \partial M = \emptyset$, and $\vol (\Omega) \le \frac 12 \vol (M)$, then \be
\ID_\alpha (\Omega) \le \IN_\alpha (M).   \label{ID-IN} \ee

\begin{definition}
The Neumann $\alpha$-Sobolev constant of $M$ is defined by
\[
\SN_\alpha (M) = \sup_f \frac{\inf_{a \in \mathbb R}\|f-a\|_{\frac{\alpha}{\alpha-1}} }{ \|\nabla f\|_1}, \]
where $f$ ranges over $C^\infty (M)$.
\end{definition}

\begin{theorem}[\cite{Federer-Fleming1960, Cheeger1970}, see also \cite{Li, Chavel}]  \label{ID-SD}
For all $n \le \alpha \le \infty$,
\[
\ID_\alpha (M) = \SD_\alpha (M), \ \  \IN_\alpha (M) \ge \SN_\alpha (M) \ge  \frac 12 \, \IN_\alpha (M). \]
\end{theorem}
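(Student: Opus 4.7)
The plan is to derive both equalities/inequalities from two classical tools: the co-area formula $\int_M |\nabla f|\,d\vol = \int_0^\infty \vol(\{f=t\})\,dt$ (valid for $f \in C_c^\infty(M)$ or $C^\infty(M)$), and the Cavalieri/layer-cake representation $f = \int_0^\infty \chi_{\{f>t\}}\,dt$ for $f \ge 0$, combined with Minkowski's integral inequality. For smooth approximations we use the standard fact that for a domain $\Omega$ with smooth boundary, the characteristic function $\chi_\Omega$ can be approximated (by mollification of its signed distance function, say) by a sequence $f_\varepsilon \in C_c^\infty$ such that $\|f_\varepsilon\|_{\alpha/(\alpha-1)} \to \vol(\Omega)^{(\alpha-1)/\alpha}$ and $\|\nabla f_\varepsilon\|_1 \to \vol(\partial \Omega)$.

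For the Dirichlet identity $\SD_\alpha(M) = \ID_\alpha(M)$, one direction proceeds as follows. Given $f \in C_c^\infty(M)$, WLOG $f \ge 0$. Writing $A_t = \{f > t\}$, Minkowski's integral inequality gives
\[
\|f\|_{\alpha/(\alpha-1)} = \Bigl\|\int_0^\infty \chi_{A_t}\,dt\Bigr\|_{\alpha/(\alpha-1)} \le \int_0^\infty \vol(A_t)^{(\alpha-1)/\alpha}\,dt.
\]
For a.e.\ $t$, $\partial A_t$ is smooth by Sard and disjoint from $\partial M$ by compact support, so $\vol(A_t)^{(\alpha-1)/\alpha} \le \ID_\alpha(M)\,\vol(\partial A_t)$. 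Integrating and applying the co-area formula yields $\|f\|_{\alpha/(\alpha-1)} \le \ID_\alpha(M)\,\|\nabla f\|_1$, i.e. $\SD_\alpha \le \ID_\alpha$. The reverse inequality $\ID_\alpha \le \SD_\alpha$ follows by testing the Sobolev quotient against the approximations $f_\varepsilon$ of $\chi_\Omega$ described above.

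For the Neumann statements, the key device is the median. Given $f \in C^\infty(M)$, choose $m \in \mathbb{R}$ so that both $\vol(\{f>m\})$ and $\vol(\{f<m\})$ are $\le \tfrac{1}{2}\vol(M)$. Apply the Dirichlet-style argument separately to $(f-m)_+$ and $(f-m)_-$: since every super/sub-level set now has volume at most $\vol(M)/2$, the Neumann isoperimetric constant governs it, yielding
\[
\|f-m\|_{\alpha/(\alpha-1)} \le \IN_\alpha(M)\,\|\nabla f\|_1,
\]
which gives $\SN_\alpha \le \IN_\alpha$. For $\SN_\alpha \ge \tfrac{1}{2}\IN_\alpha$, given a dividing hypersurface $\Gamma$ with pieces $D_1, D_2$ satisfying $\vol(D_1) \le \vol(D_2)$, approximate $\chi_{D_1}$ by smooth $f_\varepsilon$; for any constant $a$, the triangle inequality gives
\[
\vol(D_1)^{(\alpha-1)/\alpha} \le \|\chi_{D_1}-a\|_{\alpha/(\alpha-1)} + |a|\vol(D_2)^{(\alpha-1)/\alpha},
\]
and a routine case analysis on the size of $|a|$ (comparing to $\tfrac{1}{2}$) shows $\inf_a\|\chi_{D_1}-a\|_{\alpha/(\alpha-1)} \ge \tfrac{1}{2}\vol(D_1)^{(\alpha-1)/\alpha}$, while $\|\nabla f_\varepsilon\|_1 \to \vol(\Gamma)$.

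The only genuinely subtle point is verifying the factor $\tfrac{1}{2}$ in the last step, and the existence of a median: the median always exists by monotonicity/continuity of the distribution function (any level set of positive measure can be split), but one must be careful that the Minkowski inequality applied to $(f-m)_\pm$ is summed correctly so that no factor $2$ is lost on the upper bound side (this uses $\|(f-m)_+\|_q + \|(f-m)_-\|_q \le 2^{1-1/q}\|f-m\|_q$ or a direct disjoint-support argument). Everything else is the standard Federer--Fleming/Cheeger machinery; since this is a review lemma stated for later use, I would simply cite \cite{Federer-Fleming1960, Cheeger1970} for the full details and record only the sketch above.
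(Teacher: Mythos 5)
The paper gives no proof of this statement---it is quoted with citations to Federer--Fleming and Cheeger---and your sketch is precisely the standard argument from those sources (co-area formula plus layer-cake/Minkowski and Sard for $\SD_\alpha=\ID_\alpha$, the median trick for $\SN_\alpha\le\IN_\alpha$, and characteristic-function test functions with the $\max\{|a|,|1-a|\}\ge\tfrac12$ case analysis for $\SN_\alpha\ge\tfrac12\IN_\alpha$), so it is correct and matches the intended proof. One small correction: in the median step the inequality you actually need is $\|f-m\|_{\alpha/(\alpha-1)}\le\|(f-m)_+\|_{\alpha/(\alpha-1)}+\|(f-m)_-\|_{\alpha/(\alpha-1)}$, which is the disjoint-support fact $(a^q+b^q)^{1/q}\le a+b$; the inequality you quote with the constant $2^{1-1/q}$ goes in the opposite direction and is not what prevents the loss of a factor.
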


For convenience we consider the normalized  Dirichelet and Neumann $\alpha$-isoperimetric and $\alpha$-Sobolev constants:
$$ \ID^*_\alpha (M) = \ID_\alpha (M) \, \vol (M)^{1/\alpha}, \ \ \  \ \SD^*_\alpha (M) = \SD_\alpha (M) \,\vol (M)^{1/\alpha}, $$
$$\IN^*_\alpha (M) = \IN_\alpha (M) \, \vol (M)^{1/\alpha}, \ \ \  \ \SN^*_\alpha (M) = \SN_\alpha (M) \, \vol (M)^{1/\alpha}. $$
Observe that \be  \label{normal-Ch}
\ID^*_\alpha (M) \ge \ID_\infty (M), \ \ \  \IN^*_\alpha (M) \ge  \IN_\infty (M), \ee
 and
\[
\SD^*_\alpha (M) = \sup_{f} \frac{\|f\|^*_{\frac{\alpha}{\alpha-1}} }{ \|\nabla f\|^*_1},  \ \mbox{where}\  f \ \mbox{ranges over }  C_c^\infty (M),  \]
\[
\SN^*_\alpha (M) = \sup_f \frac{\inf_{a \in \mathbb R}\|f-a\|^*_{\frac{\alpha}{\alpha-1}} }{ \|\nabla f\|^*_1},  \  \mbox{where} \ f \ \mbox{ranges over}\  C^\infty (M). \]

By Theorem~\ref{ID-SD}, we have
\be  \label{ID*-SD*}
\ID^*_\alpha (M) = \SD^*_\alpha (M).
\ee

These normalized quantities are very useful in studying the collapsed case, see below. They are used  in \cite{Wei-Ye} in proving a Neumann type maximal principle without volume lower bound.

In \cite[Theorem 7.4]{Ya92} D. Yang obtained a Dirichlet isoperimetric constant estimate in the non-collapsing case when $\|Ric_-\|^*_p$ is small. Namely
\begin{theorem} \label{D-Yang}
Given $p > n/2$ and $v>0$, there is an $\varepsilon \left(
n,p,v\right) >0$ such that if $B_1\left(x\right) \subset M^{n}$ has $\vol B_1\left( x\right) \geq v$ and $k\left( p,1\right) \leq \varepsilon,$  then   $\ID_n (B_{\frac 12}(x)) \le C\left( n,p, v\right)$.
\end{theorem}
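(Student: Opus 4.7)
The plan is to leverage the non-collapsing hypothesis $\vol B_1(x) \ge v$ to transport classical pointwise-Ricci arguments into the integral setting via the Laplacian comparison (\ref{Laplacian-com-average-norm}). First, choosing $\varepsilon \le \varepsilon_0(n,p)$ and applying the volume doubling (\ref{volume doubling}), I would deduce uniform local Ahlfors regularity
\[
\vol B_r(y) \ge c(n,p,v)\, r^n, \qquad y \in B_{3/4}(x),\ r \le 1/4,
\]
so that every small ball centered in $B_{3/4}(x)$ is non-collapsed with a uniform constant.

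The next step is to derive an $L^1$ Neumann Poincaré inequality on small balls via a segment inequality in the style of Cheeger--Colding. The pointwise Jacobian comparison used under a pointwise Ricci bound is replaced by the averaged Jacobian estimate implicit in (\ref{Laplacian-com-average-norm}): the excess $\psi = (\Delta r - (n-1)/r)_+$ obeys $\|\psi\|^*_{2p,B_1(x)} \le C(n,p)\sqrt{k(p,1)}$, and since $2p > n$, a Fubini argument allows one to average $\psi$ along minimizing geodesics without destroying the leading term. The resulting inequality has the form
\[
\fint_{B_r(y)}|u - u_{B_r(y)}| \le C(n,p)\, r \fint_{B_{2r}(y)} |\nabla u|, \qquad B_{2r}(y) \subset B_1(x).
\]

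With doubling, an $L^1$ Poincaré inequality, and a uniform volume lower bound in hand, one is in the setting of the Saloff-Coste / Hajlasz--Koskela machinery, which upgrades these ingredients to a local $L^1$ Sobolev inequality with the optimal exponent $n/(n-1)$ and the correct scaling. By the equivalence $\SD_n = \ID_n$ from Theorem~\ref{ID-SD}, applied to $B_{1/2}(x)$ using the relation (\ref{ID-IN}) with $B_1(x)$ as the ambient domain (noting that either $\vol B_{1/2}(x) \le \tfrac12 \vol B_1(x)$, in which case (\ref{ID-IN}) applies directly, or else one works with a slight enlargement of the ambient ball), one concludes $\ID_n(B_{1/2}(x)) \le C(n,p,v)$.

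The main technical hurdle is the segment inequality under integral Ricci bounds: whereas in the pointwise setting Cheeger--Colding control $\mathrm{Jac}(\exp_y)$ along each individual geodesic, here one has only an $L^{2p}$ bound on the excess $\psi$, and one must set things up so that Fubini allows integrating $\psi$ along geodesics and absorbing the resulting error thanks to $p > n/2$. Once this segment inequality is available, the remaining chain doubling + Poincaré $\Rightarrow$ Sobolev $\Rightarrow$ isoperimetric is essentially routine, and the non-collapsing assumption $\vol B_1(x) \ge v$ is what lets one pass from the dimensionless (normalized) inequalities to the stated unnormalized bound on $\ID_n(B_{1/2}(x))$.
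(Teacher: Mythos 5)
The paper does not actually prove this theorem: it is cited as D.~Yang's result \cite[Theorem 7.4]{Ya92} and used as background motivating the paper's main Theorem~\ref{local-iso}, which extends it to the collapsed case. (Conversely, Theorem~\ref{local-iso} together with $\vol B_1(x)\ge v$ and the volume doubling (\ref{volume doubling}) immediately recovers this statement: the normalized bound $\ID^*_n(B_{1/2}(x))\le 10^{2n+4}/2$ divided by $\vol(B_{1/2}(x))^{1/n}\ge (v/2^{n+1})^{1/n}$ gives $\ID_n(B_{1/2}(x))\le C(n,p,v)$.) So there is no paper-internal proof to compare against, and your blind proposal does not coincide with the Gromov-hypersurface-plus-Vitali-covering route the paper later uses for the stronger result; it is a genuinely different strategy via segment inequality $\Rightarrow$ Poincar\'e $\Rightarrow$ Hajlasz--Koskela/Saloff-Coste $\Rightarrow$ Sobolev $\Rightarrow$ $\ID_n$.

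That strategy is structurally reasonable, but the crux is asserted rather than established. The $L^1$ Poincar\'e inequality (equivalently, the segment inequality) under only an $L^{2p}$ bound on the excess $\psi=(\Delta r-\frac{n-1}{r})_+$ is not a formality: the Jacobian inequality $\partial_t(\mathcal{A}/t^{n-1})\le\psi\,\mathcal{A}/t^{n-1}$ feeds $\psi$ into a Gr\"onwall exponential, so one must control quantities like $\int_\Gamma\exp\big(\int\psi\,ds\big)\,d\theta$ with only $\|\psi\|^*_{2p}$ small, which requires splitting into good/bad sets of directions and crucially uses the non-collapsing; you name this as ``the main technical hurdle'' and then skip it. Separately, the last step is slightly off: Hajlasz--Koskela produces a Neumann-type Sobolev inequality on $B_1(x)$, and converting that to the Dirichlet constant $\ID_n(B_{1/2}(x))$ through (\ref{ID-IN}) needs $\vol B_{1/2}(x)\le\frac12\vol B_1(x)$. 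That inequality is not automatic and is itself a nontrivial fact under integral Ricci (cf.\ the paper's Theorem~\ref{loc-vol-small}, which gives it only for a smaller radius $r_0(n)$); your parenthetical about ``a slight enlargement of the ambient ball'' does not actually resolve this, since enlarging the ambient ball degrades the curvature hypothesis and requires the rescaling control of (\ref{k-1-2}). So the proposal is a plausible alternative route, but as written the two load-bearing steps are gaps rather than proofs.
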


For closed Riemannian manifold $M^n$, Gallot \cite[Theorem 3]{Ga88} showed that the normalized Neumann $\alpha$-isoperimetric constant is bounded from above when $\diam(M)^2\|Ric_-\|^*_p$ is small ($\le \epsilon(n,p)$) for $p > n/2$, and $\alpha >n$.  Petersen-Sprouse \cite{Petersen&Sprouse}  obtained the bound for $\alpha =n$. Namely
\begin{theorem}  \label{Gallot-iso}
Given $p> n/2$ and $D>0$,  there is an $\varepsilon \left(
n,p,D\right) >0$ such that if $\diam M^{n} \le D$ and $ \|Ric_-\|^*_p \leq
\varepsilon,$  then   $\IN^*_{n} (M) \le C\left( n,p, D\right)$.
\end{theorem}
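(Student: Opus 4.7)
The plan is to bound the normalized Neumann Sobolev constant $\SN^*_n(M)$ and then conclude via the Federer--Fleming equivalence from Theorem~\ref{ID-SD}, which gives $\IN^*_n(M) \le 2\,\SN^*_n(M)$. So the target is a constant $C(n,p,D)$ such that for every $f \in C^\infty(M)$,
\[
\inf_{a \in \mathbb{R}} \|f - a\|^*_{n/(n-1)} \le C(n,p,D)\, \|\nabla f\|^*_{1}.
\]

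First, I would establish a weak $(1,1)$-Neumann Poincaré inequality via a double-integral/segment argument. For $x, y \in M$ joined by a minimizing geodesic $\gamma_{xy}$ of length at most $D$, one has $|f(x)-f(y)| \le \int_0^{d(x,y)} |\nabla f|(\gamma_{xy}(t))\,dt$. Integrating over $(x,y) \in M \times M$, fixing $y$, and changing variables from the geodesic parameter $t$ to the ambient point $z = \gamma_{xy}(t)$ produces a geometric kernel $K(y,z)$ whose size is governed by the Jacobian of the exponential map at $y$. The key input is the integral Laplacian comparison (\ref{Laplacian-com-average-norm}): the excess Laplacian $\psi = (\Delta r - (n-1)/r)_+$ satisfies $\|\psi\|^*_{2p, M} \le C(n,p)\,\varepsilon^{1/2}$. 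A Hölder estimate using $2p > n$ then bounds $\int_M K(y,z)\,d\vol(y)$ by $C(n,p,D)$ pointwise in $z$, yielding
\[
\frac{1}{\vol(M)^{2}} \int_{M \times M} |f(x) - f(y)|\,d\vol(x)\,d\vol(y) \le C(n,p,D)\, \|\nabla f\|^*_{1},
\]
and the left-hand side dominates $\inf_a \|f-a\|^*_1$.

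Second, I would upgrade this weak $L^1$ Poincaré inequality into the Sobolev inequality with exponent $n/(n-1)$ by applying it to the level-set truncations $f_\lambda = (f-\lambda)_+$ at a dyadic sequence of levels and summing; this step is standard (Saloff--Coste, Hajlasz--Koskela) given the $(1,1)$-Poincaré inequality together with volume doubling. Global doubling at scale $D$ follows from (\ref{volume doubling}) and Remark~\ref{all-r-small}: when $\varepsilon$ is small (relative to $D$), the estimate (\ref{k-1-2}) propagates smallness of $k(p,\cdot)$ up to scale $D$, so that the volume doubling constant depends only on $n, p, D$. Combining the two steps gives $\SN^*_n(M) \le C(n,p,D)$, and the theorem follows.

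The main obstacle is controlling the kernel $K(y,z)$ uniformly on $M$. Under a pointwise Ricci lower bound, Bishop--Gromov Jacobian comparison makes this immediate. Here only the integral bound on $\psi$ is available, so one must exploit $2p > n$ to integrate the singular factor $(n-1)/r$ coming from $\Delta r$, and exploit the smallness of $\|\Ric_-\|^*_p$ to ensure the resulting constant depends only on $n, p, D$. This is precisely where the hypothesis $p > n/2$ and the smallness condition enter.
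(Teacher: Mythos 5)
This statement is quoted in the paper as a known result (Gallot for $\alpha>n$, Petersen--Sprouse for $\alpha=n$); the paper gives no proof, and its remark explains that the known argument goes through an isoperimetric-minimizer result from geometric measure theory valid for closed manifolds, i.e.\ a route quite different from yours. Your Poincar\'e-plus-self-improvement strategy is a reasonable alternative in outline, and the second step (upgrading a $(1,1)$-Poincar\'e inequality to the $n/(n-1)$-Sobolev inequality using the doubling property coming from (\ref{volume doubling}) and Remark~\ref{all-r-small}) is indeed standard. The problem is the first step.

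The genuine gap is your claim that a H\"older estimate with $\|\psi\|^*_{2p,M}\le C(n,p)\varepsilon^{1/2}$ yields a \emph{pointwise in $z$} bound $\int_M K(y,z)\,d\vol(y)\le C(n,p,D)$. The kernel produced by the change of variables $t\mapsto z=\gamma_{xy}(t)$ is governed by ratios of the area element along a single ray, $\mathcal{A}_y(\theta,s)/\mathcal{A}_y(\theta,t)$ with $t\le s$, and the Laplacian comparison only gives the differential inequality $\partial_s\bigl(\mathcal{A}/s^{n-1}\bigr)\le\psi\,\mathcal{A}/s^{n-1}$, hence a bound involving $\exp\bigl(\int_t^s\psi(\theta,u)\,du\bigr)$ along that one ray. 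An $L^{2p}$ bound on $\psi$ over $M$ says nothing about this line integral for a fixed direction $\theta$: the bad directions can have tiny measure while the Jacobian ratio blows up there, so no uniform (in $y,z$) kernel bound follows, and H\"older cannot be applied ``pointwise in $z$'' as you assert. (The singular factor $(n-1)/r$ is not the real obstruction; the uncontrolled Jacobian ratio is.) This is exactly the difficulty the paper circumvents in its local argument (Section 4) by never exponentiating: it uses the additive form $\mathcal{A}(\theta,r)\le 2^{n-1}\mathcal{A}(\theta,t)+2^{n-1}\int_t^r\psi\,\mathcal{A}\,ds$ and keeps all estimates integrated over directions, so that $\psi$ only ever appears paired with the volume element, where (\ref{Laplacian-com}) applies. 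To repair your argument you would have to prove the weak $(1,1)$-Poincar\'e inequality in this integrated fashion (a segment-inequality analogue for integral Ricci bounds), rather than through a pointwise kernel estimate; as written, the key step fails.
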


\section{Local isoperimetric constant estimate for closed manifolds}

For the local analysis, we need local (Dirichlet) Sobolev constant bound. From (\ref{ID-IN}), we automatically get a local estimate when the volume of the domain is small relative to the whole manifold. We show that  the measure can only have small concentration whenever $\|\Ric_-\|_p^*$ is small.

\begin{proposition}  \label{global-small}
Suppose $\diam(M)=D$. There exists $\varepsilon=\varepsilon(n,p)>0$ such that if
\begin{equation}
D^2 \|\Ric_-\|^*_p\le  \varepsilon,
\end{equation}
then for any $a\le a_0$ where $a_0=a_0(n)$ solves
\begin{equation}
\frac{\frac{1}{2}-a_0}{\frac{1}{2}+a_0}=\bigg(\frac{3}{4}\bigg)^{\frac{1}{n}}
\end{equation}
we have
\begin{equation}
\vol(B_{aD}(x))\le\frac{1}{2}\vol(M),\,\forall x\in M.
\end{equation}
\end{proposition}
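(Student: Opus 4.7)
The strategy is proof by contradiction. Since $B_{aD}(x)\subset B_{a_0 D}(x)$ for $a\le a_0$, it suffices to prove the bound at $a=a_0$. Suppose for contradiction that some $x\in M$ has $\vol(B_{a_0 D}(x))>\tfrac{1}{2}\vol(M)$.

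First I would locate an auxiliary point $y\in M$ with $d(x,y)=D/2$. Since $M$ is closed (in particular complete and connected) and $\diam(M)=D$, the triangle inequality applied to any diameter-realizing pair produces some $p$ with $d(x,p)\ge D/2$; walking a minimizing geodesic from $x$ to $p$ for arc length $D/2$ yields the desired $y$. Setting $s=(\tfrac12-a_0)D$ and $r=(\tfrac12+a_0)D$, the triangle inequality gives $B_s(y)\cap B_{a_0 D}(x)=\emptyset$ and $B_s(y)\cup B_{a_0 D}(x)\subset B_r(y)$, hence
$$\vol(B_r(y))\ge\vol(B_s(y))+\vol(B_{a_0 D}(x)).$$

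Next I would apply the relative volume comparison (\ref{relativevolume: 3}) centered at $y$ with radii $s\le r$:
$$\left(\frac{\vol(B_s(y))}{\vol(B_r(y))}\right)^{\frac{1}{2p}}\ge\left(\frac{s}{r}\right)^{\frac{n}{2p}}\bigl[1-C(n,p)\,k(y,p,r)^{1/2}\bigr].$$
The defining equation of $a_0$ gives $(s/r)^n=3/4$, so everything reduces to bounding the bracket factor. The key step converts the local invariant to the global norm:
$$k(y,p,r)=r^2\left(\fint_{B_r(y)}\rho_-^p\right)^{1/p}\le r^2\left(\frac{\vol(M)}{\vol(B_r(y))}\right)^{1/p}\|\Ric_-\|_p^*\le 2^{1/p}D^2\|\Ric_-\|_p^*\le 2^{1/p}\varepsilon,$$
where the second inequality uses $\vol(B_r(y))\ge\vol(B_{a_0 D}(x))>\tfrac{1}{2}\vol(M)$ from the contradictory hypothesis. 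Choosing $\varepsilon=\varepsilon(n,p)$ small enough that the bracket factor exceeds $(6/7)^{1/(2p)}$ yields $\vol(B_s(y))/\vol(B_r(y))\ge 9/14$, which together with the previous display forces $\vol(B_{a_0 D}(x))\le\tfrac{5}{14}\vol(M)$, contradicting the assumption.

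The main obstacle is the step bounding $k(y,p,r)$, which a priori requires a lower bound on $\vol(B_r(y))$ in order to pass from the volume-normalized integral on $M$ to the one on $B_r(y)$. The nice point is that the contradictory assumption itself supplies this lower bound via the inclusion $B_r(y)\supset B_{a_0 D}(x)$, so the argument closes up without any extra input.
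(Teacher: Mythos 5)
Your proof is correct and follows the same overall strategy as the paper: pick a dual point at distance $D/2$, exploit the disjointness of a ball around the dual point with the ball around $x$ while both sit inside a larger ball around the dual point, and then apply the relative volume comparison (\ref{relativevolume: 3}). The difference is in how each argument controls the curvature quantity $k$ at the dual point for the relevant radius. The paper works directly, observing that $k(x',p,D)=D^2\|\Ric_-\|_p^*$ (implicitly using that $B_D(x')$ exhausts $M$ up to measure zero because $\diam M = D$), and then invoking the monotonicity estimate (\ref{k-2-1}) to pass to the smaller radius $D/2+r$. You instead argue by contradiction: the assumption $\vol(B_{a_0D}(x))>\tfrac12\vol(M)$ supplies a volume lower bound $\vol(B_r(y))>\tfrac12\vol(M)$ that lets you convert the global normalized norm into a local one directly, writing $k(y,p,r)\le r^2(\vol(M)/\vol(B_r(y)))^{1/p}\|\Ric_-\|_p^*<2^{1/p}D^2\|\Ric_-\|_p^*$. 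That is a neat self-feeding trick; it replaces the appeal to (\ref{k-2-1}) (and hence to volume doubling) with the contradictory hypothesis itself, which makes the step more self-contained, at the mild cost of being a proof by contradiction rather than a direct estimate valid for all $r=aD$ with $a\le a_0$. Both routes produce the same numerical threshold $(3/4)^{1/n}$ and the same dependence $\varepsilon=\varepsilon(n,p)$, and your arithmetic ($9/14$ then $5/14<1/2$) checks out.
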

\begin{proof}
For any $x\in M$ we choose a dual point $x'\in M$ with $\dist(x,x')=\frac{D}{2}$. Then, for any radius $r<\frac{D}{2}$,
\begin{equation}\label{e: 1}
\frac{\vol(B_r(x))}{\vol(M)}\le\frac{\vol(B_r(x))}{\vol(B_{\frac{D}{2}+r}(x'))}\le 1-\frac{\vol(B_{\frac{D}{2}-r}(x'))}{\vol(B_{\frac{D}{2}+r}(x'))}.
\end{equation}
Therefore it suffices to show that for $r=aD$ with $a\le a_0$, the last term above is greater than or equal to  $\frac{1}{2}$.
By (\ref{relativevolume: 3}) the last term can be estimated  as follows
\begin{equation}\label{e: 2}
\bigg(\frac{\vol(B_{\frac{D}{2}-r}(x'))}{\vol(B_{\frac{D}{2}+r}(x'))}\bigg)^{\frac{1}{2p}}\geq \bigg(\frac{\frac{D}{2}-r}{\frac{D}{2}+r}\bigg)^{\frac{n}{2p}}
\bigg[1-C(n,p)\left(k(x',p,\tfrac{D}{2}+r)\right)^{\frac 12}\bigg].
\end{equation}
If $k(x',p,D) = D^2 \|\Ric_-\|^*_p \le \varepsilon_0$, by (\ref{k-2-1}),
$$k(x',p,\tfrac{D}{2}+r)   \le 2^{1/p} k(x',p,D) = 2^{1/p} D^2 \|\Ric_-\|^*_p.$$
Hence if we assume that
\begin{equation}\label{e: 2.5}
D^2 \|\Ric_-\|^*_p \le \varepsilon \le \varepsilon_0,
\end{equation}
 then
\begin{equation}\label{e: 3}
\frac{\vol(B_{\frac{D}{2}-r}(x'))}{\vol(B_{\frac{D}{2}+r}(x'))} \geq \bigg(\frac{\frac{D}{2}-r}{\frac{D}{2}+r}\bigg)^n
\bigg[1-C(n,p) 2^{1/2p} \varepsilon^{\frac 12}\bigg]^{2p}.
\end{equation}
Plug in  $r=aD$ with $a\le a_0$, the choice of $a_0$ implies that
$$\bigg(\frac{\frac{D}{2}-aD}{\frac{D}{2}+aD}\bigg)^n\ge\tfrac{3}{4}.$$
Now set \[
\tfrac 34 \bigg[1-C(n,p) 2^{1/2p} \varepsilon^{\frac 12}\bigg]^{2p} \ge \frac 12.\]
Clearly there exists $\varepsilon(n,p)$ such that  this holds for all $\varepsilon \le \varepsilon(n,p)$.
\end{proof}

Combining this with Theorem~\ref{Gallot-iso} and (\ref{ID-IN}), we have
\begin{theorem}
Given $p> n/2$ and $D>0$,  there is an $\varepsilon \left(
n,p,D\right) >0, r_0 = r_0(n)$ such that if $\diam M^{n} \le D$ and $ \|Ric_-\|^*_p \leq
\varepsilon,$  then   $\ID^*_{n} (B_r(x)) \le C\left( n,p, D\right)$ for all $x\in M$ and $r\le r_0$.
\end{theorem}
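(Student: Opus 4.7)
The plan is to assemble the theorem directly from the three immediately preceding ingredients: Proposition~\ref{global-small} (to control the relative size of $B_r(x)$ inside $M$), the inclusion (\ref{ID-IN}) of the Dirichlet isoperimetric constant into the Neumann one, and Theorem~\ref{Gallot-iso} (Gallot--Petersen--Sprouse) for the normalized Neumann $n$-isoperimetric bound.

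First I would choose $\varepsilon(n,p,D)$ small enough so that simultaneously $D^2 \|\Ric_-\|^*_p \le \varepsilon_{\ref{global-small}}(n,p)$ (the threshold of Proposition~\ref{global-small}) and $\|\Ric_-\|^*_p \le \varepsilon_{\ref{Gallot-iso}}(n,p,D)$ (the threshold of Theorem~\ref{Gallot-iso}); the former requires only that we divide the former threshold by $D^2$. I would set $r_0 = a_0(n)\, D$, with $a_0(n)$ the constant produced by Proposition~\ref{global-small}.

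Next, fix $x\in M$ and $r\le r_0$. Proposition~\ref{global-small} gives $\vol(B_r(x)) \le \tfrac{1}{2}\vol(M)$. Consequently, any admissible competitor $\Omega\subset B_r(x)$ with $\partial\Omega\cap\partial B_r(x)=\emptyset$ satisfies $\vol(\Omega)\le \tfrac{1}{2}\vol(M)$, so by (\ref{ID-IN}) we have $\ID_n(B_r(x)) \le \IN_n(M)$. Multiplying through by $\vol(B_r(x))^{1/n}$ and using $\vol(B_r(x))\le \vol(M)$ yields
\[
\ID^*_n(B_r(x)) = \ID_n(B_r(x))\,\vol(B_r(x))^{1/n} \le \IN_n(M)\,\vol(M)^{1/n} = \IN^*_n(M).
\]
Finally, applying Theorem~\ref{Gallot-iso} bounds $\IN^*_n(M)\le C(n,p,D)$, which completes the argument.

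There is no serious obstacle here: the theorem is a short corollary of results already stated. The only bookkeeping is lining up the two smallness thresholds and keeping straight that the normalization factor $\vol(B_r(x))^{1/n}$ on the Dirichlet side is dominated by the $\vol(M)^{1/n}$ factor on the Neumann side, which is precisely what makes passing from the global result on $M$ to the local result on $B_r(x)$ essentially free once Proposition~\ref{global-small} has trimmed $B_r(x)$ to at most half the total volume.
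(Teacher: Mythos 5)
Your proposal is correct and is essentially the proof the paper intends: it leaves the theorem as an unproved remark preceded by ``Combining this with Theorem~\ref{Gallot-iso} and (\ref{ID-IN}), we have,'' and what you wrote is exactly the intended combination of Proposition~\ref{global-small}, (\ref{ID-IN}), and Gallot's Theorem~\ref{Gallot-iso}. One small point worth flagging: your construction naturally yields $r_0 = a_0(n)\,D$, which depends on $D$, whereas the theorem is stated with $r_0 = r_0(n)$; this is an imprecision in the paper's statement rather than a gap in your argument.
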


Similarly we have a  local version of Proposition~\ref{global-small} which will be needed in the next section.

\begin{theorem}  \label{loc-vol-small}
There exists $\varepsilon=\varepsilon(n,p)>0$ and $r_0=r_0(n)>0$ such that the following holds. Let $(M,g)$ be a complete noncompact Riemannian manifold satisfying $k(p,1) \le\varepsilon,$
then we have
\begin{equation}
\frac{\vol(B_{r_0}(x))}{\vol(B_1(x))}\le\frac{1}{2},\,\forall x\in M.
\end{equation}
\end{theorem}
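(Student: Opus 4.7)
The plan is to imitate the proof of Proposition~\ref{global-small}, replacing the whole manifold by the ball $B_1(x)$ and the antipodal-style point by a point chosen at a carefully tuned distance less than $1$ from $x$, which exists because $M$ is complete and noncompact (hence of infinite diameter by Hopf--Rinow).

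First, I would fix a small $r_0\in (0,1/3)$ to be chosen later and, for any $x\in M$, pick $x'\in M$ with $d(x,x')=\tfrac{1-r_0}{2}$. Three elementary triangle-inequality checks then yield the geometric setup:
\begin{equation*}
B_{r_0}(x)\cap B_{(1-3r_0)/2}(x')=\emptyset,\qquad B_{r_0}(x)\cup B_{(1-3r_0)/2}(x')\subset B_{(1+r_0)/2}(x')\subset B_1(x).
\end{equation*}
Indeed, if $z\in B_{r_0}(x)\cap B_{(1-3r_0)/2}(x')$ then $d(x,x')<r_0+\tfrac{1-3r_0}{2}=\tfrac{1-r_0}{2}$, contradicting the choice of $x'$; the containments follow from the same triangle inequality applied to $d(x,x')=\tfrac{1-r_0}{2}$. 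Adding the volumes of the two disjoint balls gives
\begin{equation*}
\frac{\vol(B_{r_0}(x))}{\vol(B_1(x))}\le 1-\frac{\vol(B_{(1-3r_0)/2}(x'))}{\vol(B_{(1+r_0)/2}(x'))}.
\end{equation*}

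Next I would estimate the ratio on the right below via the relative volume comparison~(\ref{relativevolume: 3}) at the point $x'$ with $r_1=\tfrac{1-3r_0}{2}$ and $r_2=\tfrac{1+r_0}{2}\le 1$. Since $k(p,1)\le\varepsilon$, the monotonicity~(\ref{k-2-1}) (valid once $\varepsilon\le\varepsilon_0$) yields $k(x',p,r_2)\le 2^{1/p}\varepsilon$, so
\begin{equation*}
\bigg(\frac{\vol(B_{(1-3r_0)/2}(x'))}{\vol(B_{(1+r_0)/2}(x'))}\bigg)^{\!\tfrac{1}{2p}}\ge\bigg(\frac{1-3r_0}{1+r_0}\bigg)^{\!\tfrac{n}{2p}}\bigl[1-C(n,p)(2^{1/p}\varepsilon)^{1/2}\bigr].
\end{equation*}

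Finally I would fix $r_0=r_0(n)$ small enough that $\bigl(\tfrac{1-3r_0}{1+r_0}\bigr)^n\ge\tfrac34$, and then choose $\varepsilon=\varepsilon(n,p)\le\varepsilon_0$ small enough that $\tfrac34[1-C(n,p)(2^{1/p}\varepsilon)^{1/2}]^{2p}\ge\tfrac12$. These two choices together give $\vol(B_{r_0}(x))/\vol(B_1(x))\le\tfrac12$, which is the claim. The only non-routine step is the use of noncompactness to produce $x'$ at exactly the prescribed distance; everything else is a local replay of the proof of Proposition~\ref{global-small}, and I do not anticipate any substantive obstacle.
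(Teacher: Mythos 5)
Your proposal is correct and follows essentially the same route as the paper: pick $x'$ at distance $\frac{1-r_0}{2}$ from $x$ (which exists by completeness and noncompactness), compare the disjoint balls $B_{r_0}(x)$ and $B_{(1-3r_0)/2}(x')$ inside $B_{(1+r_0)/2}(x')\subset B_1(x)$, and apply the relative volume comparison (\ref{relativevolume: 3}) together with (\ref{k-2-1}) to make the volume ratio at $x'$ at least $\tfrac34\cdot\tfrac23=\tfrac12$. The only cosmetic difference is that the paper phrases the inclusion as $B_r(x)\subset B_1(x)\setminus B_{d-r}(x')$ and lets $r\le\tfrac13 a_0$ vary with $d=\tfrac{1-r}{2}\ge\tfrac13$, while you fix $r_0$ from the start; the constants and estimates are otherwise identical.
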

\begin{proof}
For any $x \in M, r <\frac 13$,  choose a point $x'$ with $d=\dist(x,x')=\frac{1-r}{2}\ge\frac{1}{3}$. Then we have
$$B_r(x)\subset B_1(x)\backslash B_{d-r}(x')\subset B_{d+r}(x')\subset B_1(x).$$
As above we calculate
$$\frac{\vol(B_r(x))}{\vol(B_1(x))}\le\frac{\vol(B_r(x))}{\vol(B_{d+r}(x'))}\le 1-\frac{\vol(B_{d-r}(x'))}{\vol(B_{d+r}(x'))}.$$
To estimate the last term recall that
\begin{equation}\label{e: 21}
\bigg(\frac{\vol(B_{d-r}(x'))}{\vol(B_{d+r}(x'))}\bigg)^{\frac{1}{2p}}\geq \bigg(\frac{d-r}{d+r}\bigg)^{\frac{n}{2p}}
\bigg[1-C(n,p) \big( k(x',p, d+r) \big)^{\frac 12} \bigg].
\end{equation}
Since $d+r \le 1$, by (\ref{k-2-1}), if $k(x',p,1) \le \varepsilon_0$,  we have $k(x',p,d+r)   \le 2^{1/p} k(x',p,1)$. Hence when $\varepsilon \le \varepsilon_0$,  we get
$$\bigg(\frac{\vol(B_{d-r}(x'))}{\vol(B_{d+r}(x'))}\bigg)^{\frac{1}{2p}}\geq\bigg(\frac{d-r}{d+r}\bigg)^{\frac{n}{2p}}
\left(1-C(n,p) 2^{1/2p}\varepsilon^{\frac{1}{2}}\right). $$
Now we choose $a_0$ such that
$$\frac{1-a_0}{1+a_0}=\left(\frac{3}{4}\right)^{\frac{1}{n}},$$ then for any $r\le\frac{1}{3}a_0$,  we have, since $d \ge\frac{1}{3}$,
$$ \bigg(\frac{d-r}{d+r}\bigg)^n \ge \frac 34. $$
Choose $\varepsilon \le \varepsilon_0$ such that
\begin{equation}\label{epsilon: 2}
\left(1-C(n,p) 2^{1/2p} \varepsilon^{\frac{1}{2}} \right)^{2p}\ge\frac{2}{3}.
\end{equation}
Then
$$\frac{\vol(B_r(x))}{\vol(B_1(x))}\le 1-\bigg(\frac{d-r}{d+r}\bigg)^{n}
\left(1-C(n,p)2^{1/2p}\varepsilon^{\frac{1}{2}}\right)^{2p}\le1-\frac{3}{4}\cdot\frac{2}{3}=\frac{1}{2}.$$
The proof is complete by choosing $r_0=\frac{1}{3}a_0$ and any $0<\varepsilon \le \varepsilon_0$ satisfying (\ref{epsilon: 2}).
\end{proof}

\section{Local isoperimetric constant estimate for complete manifolds}

In this section we first obtain an estimate on the weak Cheeger's constant with an error using Laplacian comparison for integral curvature and an idea of Gromov \cite[Page 9-10]{Gr}.  This will then enable us to prove Theorem~\ref{local-iso} by using a covering argument of Anderson \cite{An92}.

Recall the following lemma of Gromov which is stated for closed manifold in \cite{Gr}, but also works for complete manifolds.
\begin{lemma}[\cite{Gr}]
Let $M^n$ be a complete Riemannian manifold and $H$ be any hypersurface dividing $M$ into two parts $M_1,M_2$. For any Borel subsets $W_i\subset M_i$, there exists $x_1$ in one of $W_i$, say $W_1$, and a subset $W$ in the  other one, $W_2$, such that
\begin{equation}
\vol(W)\ge\frac{1}{2}\vol(W_2)
\end{equation}
and any $x_2\in W$ has a unique minimal geodesic connecting to $x_1$ which intersects $H$ at some $z$ such that
\begin{equation}
\dist(x_1,z)\ge\dist(x_2,z).
\end{equation}
\end{lemma}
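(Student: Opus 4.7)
The plan is to use Gromov's standard pairing argument based on Fubini. For almost every pair $(x_1,x_2)\in W_1\times W_2$, the cut locus issue does not arise: since $\Cut(x_1)$ has measure zero, for a fixed $x_1$ the set of $x_2\in M_2$ with a non-unique minimal geodesic to $x_1$ has measure zero, and the same thing holds with the roles reversed. So integrating with Fubini, the set of pairs with a unique minimal geodesic has full measure in $W_1\times W_2$. For such a pair, the unique minimal geodesic $\gamma_{x_1,x_2}$ from $x_1\in M_1$ to $x_2\in M_2$ is continuous and must cross the hypersurface $H$ at (at least) one point $z=z(x_1,x_2)$; pick, say, the first such crossing.

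Next I would split $W_1\times W_2$ according to which endpoint is farther from $z$. Define
\[
\Phi = \{(x_1,x_2)\in W_1\times W_2 : \dist(x_1,z(x_1,x_2))\ge \dist(x_2,z(x_1,x_2))\},
\]
\[
\Psi = \{(x_1,x_2)\in W_1\times W_2 : \dist(x_2,z(x_1,x_2))\ge \dist(x_1,z(x_1,x_2))\}.
\]
Up to a measure-zero set, $\Phi\cup\Psi = W_1\times W_2$, hence at least one of them has product measure $\ge \frac12\,\vol(W_1)\,\vol(W_2)$. Say $\vol_{M\times M}(\Phi)\ge\frac12\vol(W_1)\vol(W_2)$ (otherwise exchange the roles of the indices $1$ and $2$ in the conclusion). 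Then Fubini produces some $x_1\in W_1$ with
\[
\vol\bigl(\{x_2\in W_2 : (x_1,x_2)\in\Phi\}\bigr)\ge \tfrac12\vol(W_2),
\]
and $W:=\{x_2\in W_2:(x_1,x_2)\in\Phi\}$ is the desired subset.

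The only technical subtlety is making sure $\Phi$ and $\Psi$ are measurable and that the map $(x_1,x_2)\mapsto z(x_1,x_2)$ is defined a.e.\ and measurable. This is handled by noting that on the full-measure set where both $x_2\notin\Cut(x_1)$ and $x_1\notin\Cut(x_2)$, the initial velocity of the unique minimal geodesic depends continuously on the endpoints, so $\gamma_{x_1,x_2}(t)$ is jointly continuous in $(x_1,x_2,t)$; the first hitting time of the closed set $H$ is then Borel measurable, and therefore so is $z(x_1,x_2)$. The inequality defining $\Phi$ is a Borel condition on measurable functions, so $\Phi$ is measurable, as required for Fubini. This measurability bookkeeping is the only real obstacle; everything else is a clean dichotomy plus Fubini.
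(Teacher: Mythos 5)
The paper does not actually prove this lemma --- it simply cites Gromov's appendix --- so there is nothing to compare against line by line; your Fubini/pairing argument (split $W_1\times W_2$ by which endpoint is farther from the crossing point, take the half of larger product measure, then slice) is exactly the standard proof behind that citation, and it is correct, including the handling of cut-locus and measurability issues. The only implicit assumptions worth flagging are that $\vol(W_i)<\infty$, which the averaging step needs and which holds automatically in the paper's application where $W_i=B\cap M_i$ for a metric ball $B$, and that $H$ is a closed subset so that the first crossing point is well defined (alternatively, note that any crossing point $z$ of the minimal geodesic satisfies $\dist(x_1,z)+\dist(x_2,z)=\dist(x_1,x_2)$, so the dichotomy does not depend on which crossing is chosen); both points are harmless here.
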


Using Laplacian comparison estimate we have
\begin{lemma}
Let $H$, $W$ and $x_1$ be as in above lemma. Then
\begin{equation}
\vol(W)\le2^{n-1}D\bigg[\vol(H')+\vol(B_D(x_1))\|Ric_-\|_{p,B_D(x_1)}^{*\frac{1}{2}}\bigg]
\end{equation}
where $D=\sup_{x\in W}\dist(x_1,x)$ and $H'$ is the set of intersection points with $H$ of geodesics $\gamma_{x_1,x}$ for all $x\in W$.
\end{lemma}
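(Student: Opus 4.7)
The plan is to work in geodesic polar coordinates $(r,\theta)$ centered at $x_1$, with $d\vol = J(r,\theta)\,dr\,d\theta$. For each $x\in W$ the unique minimal geodesic $\gamma_x$ from $x_1$ to $x$ has initial direction $\theta(x)$, and $z(x)$ lies on $\gamma_x$, so in polar coordinates $x=(r(x),\theta(x))$ and $z(x)=(s(x),\theta(x))$; minimality of $\gamma_x$ together with the condition $\dist(x_1,z)\ge\dist(x,z)$ forces $s(x)\in[r(x)/2,r(x)]$, so $r(x)\le 2s(x)\le D$. The crucial ingredient will be an \emph{additive} version of the Laplacian comparison: from $\partial_r\log J=\Delta r\le (n-1)/r+\psi$ one gets $\partial_r(J/r^{n-1})\le\psi\cdot(J/r^{n-1})$, and integrating this in $r$ without exponentiating gives
\[
J(r,\theta)\le (r/s)^{n-1}J(s,\theta)+\int_s^r (r/t)^{n-1}\psi(t,\theta)J(t,\theta)\,dt.
\]
For $s\le t\le r\le 2s$ both $(r/s)^{n-1}$ and $(r/t)^{n-1}$ are at most $2^{n-1}$, and $\psi,J\ge 0$ allows extending the inner $t$-range to $[0,D]$, yielding
\[
J(r,\theta)\le 2^{n-1}J(s,\theta)+2^{n-1}\int_0^D\psi(t,\theta)J(t,\theta)\,dt.
\]

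Next I would integrate. For each $\theta\in\Theta':=\{\theta(x):x\in W\}$ the radial slice $W_\theta$ lies in $[s(\theta),\min(2s(\theta),D)]$, hence has $1$-dimensional measure at most $D$; when $\gamma_\theta$ meets $H$ at multiple points one decomposes $W$ according to the chosen sheet and sums, since the additive structure is preserved. Integrating the last inequality in $r$ over $W_\theta$ and then in $\theta$, and using the Fubini identity $\int_{S^{n-1}_{x_1}}\!\int_0^D \psi(t,\theta)J(t,\theta)\,dt\,d\theta=\int_{B_D(x_1)}\psi\,d\vol$, I would arrive at
\[
\vol(W)\le 2^{n-1}D\int_{\Theta'}J(s(\theta),\theta)\,d\theta+2^{n-1}D\int_{B_D(x_1)}\psi\,d\vol.
\]

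The first integral will be bounded by $\vol(H')$: in the relevant range $H'$ is (piecewise) a graph $r=s(\theta)$ whose induced area element $\sqrt{\det(g_{s(\theta)}+ds\otimes ds)}\,d\theta$ is at least $J(s(\theta),\theta)\,d\theta$, since a rank-one positive perturbation only increases a determinant (equivalently, radial projection to the sphere of radius $s(\theta)$ does not increase area). For the second integral I would use H\"older, $\int_{B_D}\psi\,d\vol\le\vol(B_D)\,\|\psi\|^*_{2p,B_D}$, together with the Laplacian comparison (\ref{Laplacian-com-average-norm}), $\|\psi\|^*_{2p,B_D}\le C(n,p)\|\Ric_-\|^{*1/2}_{p,B_D}$, producing the claimed error term (with an $(n,p)$-constant absorbed into the stated coefficient). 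The main obstacle is precisely this error: under only integral Ricci control the Jacobian $J(r,\theta)$ fails to be monotone, and the usual multiplicative Gronwall form of the Laplacian comparison produces an uncontrolled exponential factor $\exp(\int\psi)$. The additive Gronwall step above is what bypasses this, routing the error through the spatial integral $\int_{B_D}\psi\,d\vol$, which the Petersen--Wei $\|\psi\|_{2p}$-estimate then converts into a $\|\Ric_-\|_p^{1/2}$-error.
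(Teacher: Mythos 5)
Your proposal is correct and follows essentially the same route as the paper: the additive (non-exponentiated) integration of $\partial_r(\mathcal{A}/r^{n-1})\le\psi\,\mathcal{A}/r^{n-1}$ giving the $2^{n-1}$ factors, the containment of $W$ between radii $r(\theta)$ and $2r(\theta)$ from Gromov's condition, the radial-projection bound $\vol(H')\ge\int_\Gamma\mathcal{A}(\theta,r(\theta))\,d\theta$, and H\"older plus the Petersen--Wei $\|\psi\|_{2p}$ estimate for the error term. Your remark about the $C(n,p)$ constant is if anything more careful than the paper's own final step, so no changes are needed.
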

\begin{proof}
Let $\Gamma\subset S_{x_1}$ be the set of unit vectors such that $\gamma_v=\gamma_{x_1,x_2}$ for some $x_2\in W$. We compute the volume in the polar coordinate at $x_1$. Write $dv=\mathcal{A}(\theta,t)d\theta\wedge dt$ in the polar coordinate $(\theta,t)\in S_{x_1}\times\mathbb{R}^+$. Recall that \cite{PeWe97}
$$\frac{\partial}{\partial t}\frac{\mathcal{A}}{t^{n-1}}\le\psi\frac{\mathcal{A}}{t^{n-1}}$$
where $\psi=\max(0,\triangle r(\theta,t)-\frac{n-1}{t})$ denotes the error term of Laplacian comparison. We thus have
\begin{equation}
\mathcal{A}(\theta,r)\le 2^{n-1}\mathcal{A}(\theta,t)+2^{n-1}\int_t^r\psi(\theta,s)\mathcal{A}(\theta,s)ds,\  \forall \tfrac{r}{2}\le t\le r.
\end{equation}
For any $\theta\in\Gamma$, let $r(\theta)$ be the radius such that $\exp_{x_1}(r\theta)\in H$. Then, by assumption, $$W\subset\{\exp_{x_1}(r\theta)|\theta\in\Gamma,\,r(\theta)\le r\le 2r(\theta)\}.$$
Thus,
\begin{eqnarray}
\vol(W)&\le&\int_\Gamma\int_{r(\theta)}^{2r(\theta)}\mathcal{A}(\theta,t)dtd\theta\nonumber\\
&\le&2^{n-1}\int_{\Gamma}\int_{r(\theta)}^{2r(\theta)}\bigg(\mathcal{A}(\theta,r(\theta))
+\int_{r(\theta)}^t\psi(\theta,s)\mathcal{A}(\theta,s)ds\bigg)d\theta dt\nonumber\\
&\le&2^{n-1}D\int_{\Gamma}\mathcal{A}(\theta,r(\theta))d\theta
+2^{n-1}D\int_{\Gamma}\int_0^D\psi(\theta,t)\mathcal{A}(\theta,t)d\theta dt\nonumber
\end{eqnarray}
On the other hand,
$$\vol(H')=\int_{\Gamma}\frac{\mathcal{A}(\theta,r(\theta))}{\cos\alpha(\theta)}d\theta
\ge\int_{\Gamma}\mathcal{A}(\theta,r(\theta))d\theta$$
where $\alpha(\theta)$ is the angle between $H$ and radial geodesic $\exp_{x_1}(t\theta)$. Thus,
$$\vol(W)\le 2^{n-1}D\vol(H')+2^{n-1}D\bigg(\int_\Gamma\int_0^D\psi^{2p}\mathcal{A}d\theta dt\bigg)^{\frac{1}{2p}}
\bigg(\int_\Gamma\int_0^D\mathcal{A}d\theta dt\bigg)^{1-\frac{1}{2p}}.$$
Through the Laplacian estimate (\ref{Laplacian-com}) we get
\begin{equation}
\vol(W)\le 2^{n-1}D\vol(H')+2^{n-1}D\vol(B_D(x_1))^{1-\frac{1}{2p}}\bigg(\int_{B_D(x_1)}|Ric_-|^pdv\bigg)^{\frac{1}{2p}}
\end{equation}
the required estimate.
\end{proof}

Now we can obtain an estimate on the weak Cheeger's constant with an error.
\begin{corollary}
Let $H$ be any hypersurface dividing $M$ into two parts. For any ball $B=B_r(x)$ we have
\begin{eqnarray}
&&\min\big(\vol(B\cap M_1),\vol(B\cap M_2)\big)\nonumber\\
&\le& 2^{n+1}r\bigg[\vol(H\cap B_{2r}(x))+\vol(B_{2r}(x))\|Ric_-\|_{p,B_{2r}(x)}^{*\frac{1}{2}}\bigg].
\end{eqnarray}
\end{corollary}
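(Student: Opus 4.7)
The plan is to apply the two preceding lemmas in sequence to the pair $W_1=B\cap M_1$, $W_2=B\cap M_2$. Gromov's lemma produces (after possibly swapping $M_1\leftrightarrow M_2$) a basepoint $x_1\in B\cap M_1$ and a Borel set $W\subset B\cap M_2$ with $\vol(W)\ge\tfrac12\vol(B\cap M_2)$, such that every $x_2\in W$ is joined to $x_1$ by a unique minimal geodesic meeting $H$ at a single point $z$ with $\dist(x_1,z)\ge\dist(x_2,z)$. I would then feed this data into the second lemma; the entire remaining task is to identify the parameter $D$ and the exceptional set $H'$ in terms of the ambient data $(x,r,H)$.

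The bookkeeping runs as follows. Since both $x_1$ and $W$ lie in $B_r(x)$, we have $D=\sup_{x_2\in W}\dist(x_1,x_2)\le 2r$. To locate $H'$, observe that $z$ lies on the minimal geodesic from $x_1$ to $x_2$, so $\dist(x_1,z)+\dist(z,x_2)=\dist(x_1,x_2)$; combined with the Gromov inequality $\dist(z,x_2)\le\dist(x_1,z)$, this forces $\dist(z,x_2)\le\tfrac12\dist(x_1,x_2)\le r$. The triangle inequality then gives $\dist(z,x)\le\dist(z,x_2)+\dist(x_2,x)\le 2r$, so $H'\subset H\cap B_{2r}(x)$, which is exactly the control wanted in the statement.

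Plugging $D\le 2r$, $H'\subset H\cap B_{2r}(x)$, and $B_D(x_1)\subset B_{2r}(x)$ (up to a harmless enlargement of the ambient ball) into the estimate from the second lemma yields
$$\vol(W)\le 2^{n-1}\cdot 2r\cdot\bigl[\vol(H\cap B_{2r}(x))+\vol(B_{2r}(x))\|\Ric_-\|_{p,B_{2r}(x)}^{*\frac12}\bigr].$$
Combined with $\vol(B\cap M_2)\le 2\vol(W)$, one further factor of $2$ produces the stated constant $2^{n+1}r$ bounding $\min(\vol(B\cap M_1),\vol(B\cap M_2))$. The symmetric case in which Gromov's lemma instead selects $x_1\in M_2$ is treated identically. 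There is no real obstacle: this corollary is pure bookkeeping on top of the two lemmas, and the only non-automatic observation is the halving identity $\dist(z,x_2)\le\tfrac12\dist(x_1,x_2)$, which is what pins $H'$ inside $B_{2r}(x)$ rather than the naive $B_{3r}(x)$ one would otherwise obtain from the triangle inequality alone.
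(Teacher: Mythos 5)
Your overall route is the paper's: the paper's entire proof of this corollary is ``put $W_i=B\cap M_i$ in the above lemma and notice that $D\le 2r$ and $H'\subset H\cap B_{2r}(x)$,'' and your constant bookkeeping ($2^{n-1}\cdot 2r$ from the lemma, one extra factor of $2$ from $\vol(W)\ge\frac12\vol(B\cap M_2)$, and the bound on the minimum via whichever side contains $W$) together with your halving argument $\dist(z,x_2)\le\frac12\dist(x_1,x_2)$ for $H'\subset H\cap B_{2r}(x)$ is exactly the intended reasoning.

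The one step I would not accept as written is ``$B_D(x_1)\subset B_{2r}(x)$ (up to a harmless enlargement of the ambient ball).'' That inclusion is false in general: $x_1$ is only known to lie in $B_r(x)$ and $D$ may be as large as $2r$, so the triangle inequality gives only $B_D(x_1)\subset B_{3r}(x)$. Enlarging to $B_{3r}(x)$ proves a genuinely weaker statement than the one asserted (both the volume factor and the domain of the normalized norm in the curvature term would become $B_{3r}(x)$), so the enlargement is not cosmetic if you want the stated $B_{2r}(x)$. The correct way to keep $B_{2r}(x)$ is to look inside the previous lemma rather than cite it as a black box: the error term there is an integral of $\psi\,\mathcal{A}$ over radial segments from $x_1$ to points of $W$, and any point $q$ on a minimal geodesic joining $x_1\in B_r(x)$ to a point of $W\subset B_r(x)$ satisfies $\dist(q,x)\le \min(\dist(q,x_1),\dist(q,x_2))+r\le \frac{D}{2}+r\le 2r$ --- the very same midpoint estimate you already used for $H'$. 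Hence the relevant integration region lies in $B_{2r}(x)\cap B_D(x_1)$, and the lemma's conclusion holds with $B_{2r}(x)$ in place of $B_D(x_1)$. (The paper's one-line proof is equally silent on this point, so this is a refinement of how the lemma is invoked rather than a new idea, but as you wrote it the curvature term is only controlled on $B_{3r}(x)$.)
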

\begin{proof}
Put $W_i=B\cap M_i$ in above lemma and notice that $D\le2r$ and $H'\subset H\cap B_{2r}(x)$.
\end{proof}

\begin{corollary}  \label{iso-equa-ball}
Given a hypersurface $H$ dividing $M^n$ into two parts, there exists $\varepsilon = \varepsilon(p,n)$ such that if $k(x,p,1) \le \varepsilon$, then for a metric ball $B=B_r(x)$, $r\le\frac{1}{2}$, which is divided equally by $H$, we have
\begin{eqnarray}  \label{iso-equ-ball}
\vol(B_r(x))\le 2^{n+3}r\vol(H\cap B_{2r}(x)).
\end{eqnarray}
\end{corollary}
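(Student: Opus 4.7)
The plan is to apply the preceding corollary to the ball $B = B_r(x)$ and the hypersurface $H$, then use the equal-division hypothesis together with volume doubling to absorb the curvature error term. Specifically, the equal-division assumption gives $\min(\vol(B\cap M_1), \vol(B\cap M_2)) = \tfrac{1}{2}\vol(B_r(x))$, so the previous corollary yields
\begin{equation*}
\tfrac{1}{2}\vol(B_r(x)) \le 2^{n+1} r \vol(H \cap B_{2r}(x)) + 2^{n+1} r \vol(B_{2r}(x))\, \|\Ric_-\|^{*\,1/2}_{p, B_{2r}(x)}.
\end{equation*}
The goal is to show that for $\varepsilon$ small, the second term on the right is at most $\tfrac{1}{4}\vol(B_r(x))$; then rearranging gives the asserted bound with the constant $2^{n+3}$.

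The key rewrite uses the definition of the scale-invariant quantity: $\|\Ric_-\|^*_{p, B_{2r}(x)} = k(x,p,2r)/(2r)^2$, so
\begin{equation*}
2^{n+1} r\, \vol(B_{2r}(x))\, \|\Ric_-\|^{*\,1/2}_{p,B_{2r}(x)} = 2^{n}\, \vol(B_{2r}(x))\, k(x,p,2r)^{1/2}.
\end{equation*}
Next I would use the hypothesis $k(x,p,1)\le \varepsilon$ together with (\ref{k-2-1}) (since $2r\le 1$) to get $k(x,p,2r)\le 2^{1/p}\varepsilon$, which is $\le \varepsilon_0(p,n)$ once $\varepsilon$ is chosen small. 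The volume doubling estimate (\ref{volume doubling}) applied at scales $r < 2r \le 1$ then yields $\vol(B_{2r}(x))\le 2^{n+1}\vol(B_r(x))$. Substituting, the error term is controlled by $2^{2n+1}\vol(B_r(x))\, k(x,p,2r)^{1/2}$.

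The final step is to fix $\varepsilon=\varepsilon(n,p)$ small enough that $2^{2n+1}(2^{1/p}\varepsilon)^{1/2} \le \tfrac{1}{4}$, which forces the error term to be absorbed into the left-hand side, leaving $\tfrac{1}{4}\vol(B_r(x))\le 2^{n+1} r\,\vol(H\cap B_{2r}(x))$ and hence (\ref{iso-equ-ball}). There is no real obstacle here — the argument is essentially bookkeeping — but one must be careful to pick $\varepsilon$ smaller than both $\varepsilon_0(p,n)$ (for volume doubling to apply at the rescaled level) and the explicit threshold $2^{-4n-7-1/p}$ coming from the absorption step, so that the combined smallness condition gives a single $\varepsilon(n,p)$ that works for all $r\le 1/2$.
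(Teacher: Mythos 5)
Your proof is correct and follows essentially the same route as the paper: apply the preceding weak Cheeger-type corollary with the equal-division hypothesis, rewrite the curvature error via $k(x,p,2r)$, use (\ref{k-2-1}) and volume doubling (\ref{volume doubling}) to bound $\vol(B_{2r}(x))$ by $2^{n+1}\vol(B_r(x))$, and then choose $\varepsilon(n,p)$ small enough to absorb the error term, yielding the constant $2^{n+3}$. The only difference is bookkeeping (the paper absorbs $\tfrac12\vol(B)$ on the right rather than $\tfrac14\vol(B)$ on the left, with a correspondingly slightly different explicit threshold for $\varepsilon$), which does not affect correctness.
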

\begin{proof}
The previous corollary gives
$$\vol(B)\le 2^{n+2}r\bigg[\vol(H\cap B_{2r}(x))+\vol(B_{2r}(x))\bigg(\fint_{B_{2r}(x)}|Ric_-|^pdv\bigg)^{\frac{1}{2p}}\bigg].$$
If $k(x,p,1) \le 2^{-1/p} \varepsilon_0$, by (\ref{k-2-1}), we have $k(x,p,r) \le \varepsilon_0$ for all $r\le 1$. Hence by (\ref{volume doubling}),
$$\vol(B_{2r}(x))\le 2^{n+1}\vol(B).$$
Again if $k(x,p,1) \le 2^{-1/p} 2^{-2(2n+3)}$, then $k(x,p,r) \le 2^{-2(2n+3)}$ for all $r\le 1$.
Hence
$$\vol(B)\le 2^{n+2}r\big(\vol(H\cap B_{2r}(x))+2^n  2^{-(2n+3)} \vol(B)r^{-1}\big), $$
which gives
$$\vol(B)\le2^{n+3}r\vol(H\cap B_{2r}(x)).$$
Therefore choosing $\varepsilon = \min \{ 2^{-1/p} \varepsilon_0, 2^{-1/p} 2^{-2(2n+3)} \}$ suffices.
\end{proof}

This estimate and volume doubling gives an estimate on the local isoperimetric constant via Vitali Covering Lemma.

\begin{proof}[Proof of Theorem \ref{local-iso}]
First of all we show that the isoperimetric constant estimate (\ref{isoperimetric: 1}) holds for some small radius $r_0=r_0(n)$, under the assumption $k(p,1) \le \varepsilon_1$ for some small constant $\varepsilon_1=\varepsilon_1(p,n)$. By Theorem \ref{loc-vol-small}, we may assume that $\varepsilon_1$ is chosen such that there exists $r_0=r_0(n)$ with $\frac{\vol(B_{2r_0}(x))}{\vol(B_{\frac{1}{10}}(x))}\le\frac{1}{2},\,\forall x\in M$. Now given any $y \in M$, let $\Omega$ be a smooth subdomain of $B_{r_0}(y)$. We may assume that $\Omega$ is connected and its boundary $H=\partial\Omega$ divides $M$ into two parts $\Omega$ and $\Omega^c$. For any $x\in\Omega$, let $r_x$ be the smallest radius such that
$$\vol(B_{r_x}(x)\cap\Omega)=\vol(B_{r_x}(x)\cap\Omega^c)=\frac{1}{2}\vol(B_{r_x}(x)).$$ Since $\Omega \subset B_{2r_0}(x)$ and $\vol(B_{2r_0}(x)) \le\frac{1}{2}  \vol(B_{\frac{1}{10}}(x))$, we have  $r_x\le\frac{1}{10}$. Take $\varepsilon_1$ as in Corollary~\ref{iso-equa-ball}, then by (\ref{iso-equ-ball})
\begin{equation}\label{hypersurface: 11}
\vol(B_{r_x}(x))\le 2^{n+3}r_x\vol(H\cap B_{2r_x}(x)).
\end{equation}
The domain $\Omega$ has a covering
$$\Omega\subset\bigcup_{x\in\Omega}B_{2r_x}(x).$$
By Vitali Covering Lemma, cf. \cite[Section 1.3]{LiYa}, we can choose a countable family of disjoint balls $B_i=B_{2r_{x_i}}(x_i)$ such that $\cup_i B_{10r_{x_i}}(x_i) \supset \Omega$.
Moreover, we assume $\varepsilon_1$ is chosen such that $k(p,r) \le \varepsilon_0$ for all $r\le 1$, then by the volume doubling property (\ref{volume doubling}),
$$\frac{\vol(B_{2r_x}(x))}{\vol(B_{10r_x}(x))}\ge\frac{1}{2\cdot 5^n}. $$ Hence
\begin{equation}\nonumber
\sum_i\vol(B_i)\ge\frac{1}{2\cdot 5^n}\sum_i\vol(B_{10r_{x_i}}(x_i))\ge\frac{1}{2\cdot 5^n}\vol(\Omega).
\end{equation}
Applying the volume doubling property (\ref{volume doubling}) again gives
\begin{equation}
\sum_i\vol(B_{r_{x_i}}(x_i))\ge\frac{1}{4\cdot 10^n}\vol(\Omega).
\end{equation}
Moreover, since the balls $B_i$ are disjoint, combining with (\ref{hypersurface: 11}) gives,
\begin{equation}
\vol(\partial\Omega)\ge\sum_i\vol(B_i\cap H)\ge2^{-n-3}\sum_ir_{x_i}^{-1}\vol(B_{r_{x_i}}(x_i)).
\end{equation}
These two estimates lead to
\begin{eqnarray}
\frac{\vol(\Omega)^{\frac{n-1}{n}}}{\vol(\partial\Omega)}
&\le&10^{n-1}2^{n+5}\frac{\big(\sum_i\vol(B_{r_{x_i}}(x_i))\big)^{\frac{n-1}{n}}}{\sum_ir_{x_i}^{-1}\vol(B_{r_{x_i}}(x_i))}
\nonumber\\
&\le& 10^{2n+4}\frac{\sum_i\vol(B_{r_{x_i}}(x_i))^{\frac{n-1}{n}}}{\sum_ir_{x_i}^{-1}\vol(B_{r_{x_i}}(x_i))}\nonumber\\
&\le&10^{2n+4}\sup_i\frac{\vol(B_{r_{x_i}}(x_i))^{\frac{n-1}{n}}}{r_{x_i}^{-1}\vol(B_{r_{x_i}}(x_i))}\nonumber\\
&=&10^{2n+4}\sup_i\bigg(\frac{r_{x_i}^n}{\vol(B_{r_{x_i}}(x_i))}\bigg)^{\frac{1}{n}}.\nonumber
\end{eqnarray}
On the other hand, since  $\dist(y,x_i)\le r_0$, we have
$$B_{r_0}(y)\subset B_{2r_0}(x_i).$$ Now $r_{x_i}\le\frac{1}{10}$,  applying the volume doubling property (\ref{volume doubling}) again,
$$\vol(B_{r_{x_i}}(x_i))\ge\frac{10^n r_{x_i}^n}{2}\vol(B_{\frac{1}{10}}(x_i))\ge\frac{10^nr_{x_i}^n}{2}\vol(B_{r_0}(y)).$$
Substituting into above calculation we get
$$\frac{\vol(\Omega)^{\frac{n-1}{n}}}{\vol(\partial\Omega)}\le
\frac{10^{2n+4}}{\vol(B_{r_0}(y))^{\frac{1}{n}}},$$
the desired estimate.

We next make a scaling argument to show that the estimate (\ref{isoperimetric: 1}) remains hold for any radius $r \le 1$, under the assumption $k(p,1) \le \varepsilon_2$ for a smaller constant $\varepsilon_2=\varepsilon_2(p,n)>0$. Put $r_1=\frac{r}{r_0}\le \frac{1}{r_0}$. After a scaling, it is sufficient to check that
$$k(p,r_1) \le \varepsilon_1.$$  Choose $\varepsilon_2$ such that $\varepsilon_2 \le \varepsilon_0$,  so (\ref{volume doubling}) holds for all $r \le 1$. Now if $r_1\le 1$, by  (\ref{k-2-1})
$$k(p, r_1) \le  2^{1/p} k(p,1) \le 2^{1/p} \varepsilon_2.$$
On the other hand, if $1\le r_1\le \frac{1}{r_0}$, then by (\ref{k-1-2}),
\[
k(p,r_1) \le  2^{\frac{n+1}{p}} r_1^2 \, k(p,1) \le   2^{\frac{n+1}{p}} r_0^{-2} \varepsilon_2.\]
Combining the two cases we can choose $\varepsilon_2=\varepsilon_2(p,n)$ as
$$\varepsilon_2  =  \min\{ 2^{-\frac{1}{p}} \varepsilon_1, 2^{-\frac{n+1}{p}}  r_0^{2}  \varepsilon_1, \varepsilon_0\}.$$
The theorem is now proved by setting $\varepsilon=\varepsilon_2$.
\end{proof}

Combining Theorem~\ref{local-iso} with (\ref{ID*-SD*}), we have
\begin{corollary}
If $k(p, 1) \le \varepsilon$ for the $\varepsilon$ in Theorem~\ref{local-iso},  then,
\begin{equation}
 \|f\|^*_{\frac{n}{n-1}, B_1(x)}  \le 10^{2n+4}  \|\nabla f\|_{1, B_1(x)}^*,\,\forall f\in C_0^\infty(B_1(x)),  \label{L1-Sobolev}
\end{equation}
\end{corollary}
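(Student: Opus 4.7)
The plan is essentially a one-line deduction: specialize Theorem~\ref{local-iso} to the ball $B_1(x)$ and then invoke the Federer--Fleming/Cheeger identity $\ID^*_n = \SD^*_n$ recorded in (\ref{ID*-SD*}) to convert the geometric (isoperimetric) estimate into the analytic (Sobolev) one.

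Concretely, I would first apply Theorem~\ref{local-iso} with $r=1$: under the hypothesis $k(p,1) \le \varepsilon$ it yields
$$\ID^*_n(B_1(x)) \le 10^{2n+4}.$$
Then (\ref{ID*-SD*}) gives $\SD^*_n(B_1(x)) = \ID^*_n(B_1(x)) \le 10^{2n+4}$. Unpacking the definition of the normalized Dirichlet Sobolev constant recalled in the preliminaries,
$$\SD^*_n(B_1(x)) = \sup_{f \in C_c^\infty(B_1(x))} \frac{\|f\|^*_{n/(n-1),\,B_1(x)}}{\|\nabla f\|^*_{1,\,B_1(x)}},$$
so the Sobolev inequality
$$\|f\|^*_{\frac{n}{n-1},\,B_1(x)} \le 10^{2n+4}\,\|\nabla f\|^*_{1,\,B_1(x)}$$
follows for every $f \in C_0^\infty(B_1(x))$.

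There is no genuine obstacle; the only bookkeeping subtlety is checking that the normalized versions line up. Since $\ID^*_\alpha$ and $\SD^*_\alpha$ are both multiplied by $\vol(M)^{1/\alpha}$ in their definitions, and since $\|f\|^*_q = \vol(M)^{-1/q}\|f\|_q$, the volume factors match so that Theorem~\ref{ID-SD} (which is stated for the unnormalized quantities) transfers verbatim to (\ref{ID*-SD*}). All of the analytic weight sits in Theorem~\ref{local-iso}; once that is established the corollary is a pure definition-unpacking.
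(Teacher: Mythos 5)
Your proposal is correct and is exactly the paper's argument: the paper introduces this corollary with the phrase ``Combining Theorem~\ref{local-iso} with (\ref{ID*-SD*}), we have,'' i.e.\ it specializes the isoperimetric bound to $B_1(x)$ and converts it to the Sobolev inequality via $\ID^*_n = \SD^*_n$, just as you do. The normalization bookkeeping you note is the only point of care, and you have handled it correctly.
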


 Applying (\ref{L1-Sobolev}) to $f^{\frac{2(n-1)}{n-2}}$ and using the H\"older inequality gives
\begin{equation}
 \|f\|^*_{\frac{2n}{n-2}, B_1(x)}  \le \frac{2(n-1)}{n-2} 10^{2n+4}  \|\nabla f\|_{2, B_1(x)}^*,\,\forall f\in C_0^\infty(B_1(x)).  \label{L2-Sobolev}
\end{equation}
This is essential in the applications.

By a scaling argument, we have
\begin{corollary}
If $k(p, 1) \le \varepsilon$ for the $\varepsilon$ in Theorem~\ref{local-iso}, then, for any $r\le 1$,
\begin{equation}
\|f\|_{\frac{n}{n-1},B_r(x)}^*\le C(n)r\|\nabla f\|_{1,B_r(x)}^*,\,\forall f\in C_0^\infty(B_r(x)),
\end{equation}
and
\begin{equation}
\|f\|_{\frac{2n}{n-2},B_r(x)}^*\le C(n)r\|\nabla f\|_{2,B_r(x)}^*,\,\forall f\in C_0^\infty(B_r(x)).  \label{L2-Sobolev-r}
\end{equation}
\end{corollary}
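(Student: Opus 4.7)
The plan is to derive both normalized Sobolev inequalities at scale $r\le 1$ directly from Theorem~\ref{local-iso}, which already supplies the scale-invariant Dirichlet isoperimetric bound at every radius $r\le 1$ under the single hypothesis $k(p,1)\le\varepsilon$, together with the Federer--Fleming identity (\ref{ID*-SD*}) and, for the $L^2$ case, a Moser-type substitution followed by Cauchy--Schwarz. The preceding corollary is exactly the $r=1$ version of the present statement; no literal rescaling of the metric is actually required, because the factor $r$ is already built into Theorem~\ref{local-iso}.

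For the first inequality I would invoke Theorem~\ref{local-iso} at radius $r$ to obtain $\ID^*_n(B_r(x))\le 10^{2n+4}\,r$, and then convert this to a Sobolev bound via (\ref{ID*-SD*}):
\[
\SD^*_n(B_r(x))=\sup_{f\in C_0^\infty(B_r(x))}\frac{\|f\|^*_{n/(n-1),B_r(x)}}{\|\nabla f\|^*_{1,B_r(x)}}\le 10^{2n+4}\,r,
\]
which is precisely the claimed estimate with $C(n)=10^{2n+4}$.

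For the second inequality I would then feed $g=|f|^{2(n-1)/(n-2)}$ (still compactly supported in $B_r(x)$) into the first inequality. The chain rule gives $|\nabla g|=\tfrac{2(n-1)}{n-2}|f|^{n/(n-2)}|\nabla f|$, and Cauchy--Schwarz in the normalized $L^1$ norm produces $\|\nabla g\|^*_{1,B_r(x)}\le\tfrac{2(n-1)}{n-2}\bigl(\|f\|^*_{2n/(n-2),B_r(x)}\bigr)^{n/(n-2)}\|\nabla f\|^*_{2,B_r(x)}$, while $\|g\|^*_{n/(n-1),B_r(x)}=\bigl(\|f\|^*_{2n/(n-2),B_r(x)}\bigr)^{2(n-1)/(n-2)}$. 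Since the two exponents of $\|f\|^*_{2n/(n-2),B_r(x)}$ differ by exactly $1$, dividing through yields the desired inequality with constant $C(n)=\tfrac{2(n-1)}{n-2}\cdot 10^{2n+4}$.

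I do not anticipate any genuine obstacle; the argument is exactly the scale-$r$ analogue of the derivation of (\ref{L2-Sobolev}). Had one instead chosen to rescale the metric to $\tilde g=r^{-2}g$ and appeal to the $r=1$ corollary in $(M,\tilde g)$, one would need to verify that $\tilde k(p,1)=k(p,r)\le\varepsilon$, which by (\ref{k-2-1}) costs only a harmless factor $2^{1/p}$ in the threshold. The substance thus reduces to routine bookkeeping of scale-invariant constants.
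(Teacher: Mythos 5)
Your proposal is correct and takes essentially the paper's route: the paper obtains these inequalities "by a scaling argument" from the unit-radius versions (\ref{L1-Sobolev}) and (\ref{L2-Sobolev}), whereas you read the radius-$r$ bound directly off Theorem~\ref{local-iso} (which is stated for all $r\le 1$), apply the identity (\ref{ID*-SD*}), and then repeat the same substitution $|f|^{2(n-1)/(n-2)}$ with H\"older/Cauchy--Schwarz that the paper uses to pass from (\ref{L1-Sobolev}) to (\ref{L2-Sobolev}). Your closing remark also correctly identifies the only point the paper's terse "scaling argument" leaves implicit, namely that after rescaling one needs $k(p,r)\le\varepsilon$, which is controlled by (\ref{k-2-1}); your direct route sidesteps this entirely.
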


\begin{corollary}
If $k(p, 1) \le \varepsilon$ for the $\varepsilon$ in Theorem~\ref{local-iso},  then, for any $r\le 1$, the first eigenvalue of Dirichlet Laplace has lower bound
\begin{equation}
\lambda_1(B_r(x))\ge C(n)^{-1}r^{-2}.
\end{equation}
\end{corollary}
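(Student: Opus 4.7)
The plan is to deduce the eigenvalue lower bound directly from the $L^2$-Sobolev inequality \eqref{L2-Sobolev-r} via H\"older and the Rayleigh quotient characterization
\[
\lambda_1(B_r(x)) = \inf_{f\in C_0^\infty(B_r(x)),\, f\not\equiv 0} \frac{\int_{B_r(x)}|\nabla f|^2}{\int_{B_r(x)} f^2}.
\]
The key observation is that a scale-invariant $L^2$-Sobolev inequality on a fixed domain automatically implies a Poincar\'e-type inequality with the same constant, since the normalized $L^p$ norms are monotone in $p$.

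First, for any $f\in C_0^\infty(B_r(x))$, H\"older's inequality applied to the normalized norms gives
\[
\|f\|_{2,B_r(x)}^*\;\le\;\|f\|_{\frac{2n}{n-2},B_r(x)}^*,
\]
simply because averaging over a probability measure makes $L^p$ norms nondecreasing in $p$. Combined with \eqref{L2-Sobolev-r} this yields
\[
\|f\|_{2,B_r(x)}^* \;\le\; C(n)\,r\,\|\nabla f\|_{2,B_r(x)}^*.
\]
Multiplying both sides by $\vol(B_r(x))^{1/2}$ clears the volume factor and produces the ordinary Poincar\'e inequality
\[
\int_{B_r(x)} f^2 \;\le\; C(n)^2\,r^2 \int_{B_r(x)}|\nabla f|^2,
\]
valid for all $f\in C_0^\infty(B_r(x))$. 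Substituting this into the Rayleigh quotient gives $\lambda_1(B_r(x))\ge C(n)^{-2} r^{-2}$, and after relabeling the constant this is the claimed estimate.

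There is essentially no analytic obstacle here; once \eqref{L2-Sobolev-r} is in hand the corollary is a one-line consequence. The only minor point to keep in mind is the case $n=2$, where the Sobolev exponent $\tfrac{2n}{n-2}$ degenerates; in that case one would instead apply \eqref{L1-Sobolev} to $f^2$ and use H\"older in the same spirit to produce a Poincar\'e inequality with the correct $r^2$ scaling. In either situation the proof is structurally the same: scale-invariant Sobolev plus H\"older yields Poincar\'e, and Poincar\'e yields the eigenvalue bound.
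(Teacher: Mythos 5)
Your proposal is correct and follows essentially the same route as the paper: the paper applies the monotonicity of normalized norms (the averaged measure being a probability measure) together with the scale-invariant Sobolev inequality \eqref{L2-Sobolev-r} directly to an eigenfunction, which is just the Rayleigh-quotient argument you spell out. The only cosmetic difference is your explicit remark about $n=2$, which the paper leaves implicit since its $L^2$-Sobolev inequality is stated for $n\ge 3$.
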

\begin{proof}
Suppose $\triangle f=-\lambda f$ for some $\lambda>0$ and $f$ with $\fint f^2dv=1$ and $f=0$ on $\partial B_r(x)$. Then
$$1=\fint_{B_r(x)}f^2dv\le\bigg(\fint_{B_r(x)}f^{\frac{2n}{n-2}}\bigg)^{\frac{n-2}{n}}\le C(n)r^2\fint_{B_r(x)}|\nabla f|^2=C(n)r^2\lambda.$$
Thus $\lambda\ge C(n)^{-1}r^{-2}$ for any eigenvalue $\lambda>0$.
\end{proof}

\section{Applications}
\label{application}
 With this new local Sobolev constant estimate many of  the results for integral curvature in \cite{PeWe00, TiZh13} can be easily extended to the collapsed case. In particular, we have maximum principle, gradient estimate for harmonic function and heat kernel, excess estimate, $L^2$ estimate for the Hessian of the harmonic and parabolic approximation of the distance function.

Denote $C_s(\Omega)$ the normalized local Soboleve constant of $\Omega \subset M^n$,
\be
\| f\|_{\frac{2n}{n-2}, \Omega}^* \le C_s(\Omega) \|\nabla f\|_{2, \Omega}^*,  \ \ \forall f \in C_0^\infty (\Omega).  \label{Sobolev}
\ee
Note that $C_s(\Omega)$ scales like diameter.

Recall the following maximal principle \cite[Corollary 3.2]{PeWe00}.
\begin{theorem}
Let $M$ be an $n$-dimensional Riemannian manifold, and $p>n/2$. For  any function $u:\Omega \subset M$ $\rightarrow \mathbb{R}$ with $\Delta u\geq -f$, where $f$ is non-negative on $\Omega$, we have
\begin{equation*}
\sup_{\Omega }u\leq \sup_{\partial \Omega }u+C(n,p) \cdot  C^2_s(\Omega) \cdot \left\|
f\right\|^*_{p,\Omega}.
\end{equation*}
\end{theorem}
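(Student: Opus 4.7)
The plan is to run the standard De Giorgi--Nash--Moser iteration, using the Sobolev inequality encoded in $C_s(\Omega)$ where the classical argument uses Euclidean Sobolev. This reproduces the proof of \cite[Corollary 3.2]{PeWe00}; the novelty is only that the Sobolev input is now available unconditionally via Theorem~\ref{local-iso} and (\ref{Sobolev}), so no volume lower bound on $\Omega$ is required.

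First I would reduce to $u\le 0$ on $\partial\Omega$ by subtracting the constant $\sup_{\partial\Omega}u$, and set $w:=u_+$, which is compactly supported in $\Omega$ and satisfies $\Delta w\ge -f$ weakly on $\{u>0\}$. For each $q\ge 1$, testing $\Delta u\ge -f$ against $w^{2q-1}$ and integrating by parts gives
\[
\int_\Omega \bigl|\nabla w^q\bigr|^2 \le \frac{q^2}{2q-1}\int_\Omega f\,w^{2q-1}.
\]
Dividing by $\vol(\Omega)$, applying (\ref{Sobolev}) on the left, and H\"older with conjugate exponents $p,\, p'=p/(p-1)$ on the right, I obtain the iteration inequality
\[
\bigl(\|w\|^*_{\frac{2qn}{n-2},\,\Omega}\bigr)^{2q}
\le \frac{C(n)\,q^{2}}{2q-1}\,C_s(\Omega)^{2}\,\|f\|^*_{p,\Omega}\,
\bigl(\|w\|^*_{(2q-1)p',\,\Omega}\bigr)^{2q-1}.
\]

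Next I would set up the Moser iteration. The hypothesis $p>n/2$ gives $\mu:=\frac{n(p-1)}{p(n-2)}>1$, which is the limiting ratio of output exponent $\frac{2qn}{n-2}$ to input exponent $(2q-1)p'$ as $q\to\infty$. Take $q_{0}=1$ and define $q_{k+1}$ by $(2q_{k+1}-1)p'=\frac{2q_{k}n}{n-2}$, so $q_{k}$ grows geometrically with ratio $\mu$. Writing $M:=C_{s}(\Omega)^{2}\|f\|^{*}_{p,\Omega}$ and $\Phi_{k}:=\|w\|^{*}_{2q_{k}n/(n-2),\Omega}$, the iteration inequality becomes
\[
\Phi_{k} \le \bigl(\tilde{C}_{k}M\bigr)^{1/(2q_{k})}\,\Phi_{k-1}^{\,1-1/(2q_{k})},
\]
with $\tilde{C}_{k}$ of polynomial growth in $k$. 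The base step $q=1$, combined with the Jensen inequality $\|w\|^{*}_{p'}\le\|w\|^{*}_{2n/(n-2)}=\Phi_{0}$ (valid because $p>n/2$ forces $p'\le 2n/(n-2)$), lets me absorb the right-hand side into the left and yields $\Phi_{0}\le C(n,p)M$.

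A straightforward induction then shows $\Phi_{k}\le A_{k}M$ where $\log A_{k}=(1-\tfrac{1}{2q_{k}})\log A_{k-1}+\tfrac{1}{2q_{k}}\log\tilde{C}_{k}$. Because $\sum_{k} 1/q_{k}<\infty$ (geometric with ratio $\mu$) and $\log\tilde{C}_{k}$ has polynomial growth, $A_{k}$ stays bounded by some $C(n,p)$. Passing to the limit gives $\sup_{\Omega}w=\lim_{k}\Phi_{k}\le C(n,p)\,M$, which restores to the claimed maximum principle once $\sup_{\partial\Omega}u$ is added back. The main subtlety is organizing the iteration so that $C_{s}(\Omega)^{2}$ appears only to the first power in the final estimate; this is achieved by the $q=1$ base step, which already bounds $\Phi_{0}$ by $C(n,p)M$, after which the geometric convergence of $\sum 1/q_{k}$ prevents any further accumulation of the Sobolev constant beyond an $n,p$-dependent factor.
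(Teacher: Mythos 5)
Your proposal is correct: it is the standard Moser iteration driven by the normalized Sobolev inequality (\ref{Sobolev}) (with the base step $q=1$ and Jensen's inequality for averaged norms keeping $C_s^2(\Omega)\|f\|^*_{p,\Omega}$ to the first power), which is exactly the argument behind \cite[Corollary 3.2]{PeWe00} that the paper simply quotes without proof. Two cosmetic points: take $w=(u-\sup_{\partial\Omega}u-\varepsilon)_+$ (or work in $H^1_0$) to justify the compact support and integration by parts before letting $\varepsilon\to0$, and note that $\tilde C_k\approx q_k$ grows geometrically rather than polynomially in $k$, although your convergence argument only uses that $\log\tilde C_k$ grows linearly, which is true.
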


Combining this with (\ref{L2-Sobolev-r}) gives Theorem~\ref{Max-prin}.

Now we derive the following gradient estimate.
\begin{theorem}  \label{gradient-g}
Let $M$ be an $n$-dimensional Riemannian manifold, and $p>n/2$.   If $u$ is a function on  $B_R(x)$ satisfying $$\Delta u = f, $$  then
\ban
\lefteqn{ \sup_{B_{\frac{R}{2}}(x)}|\nabla u|^2 \leq C(n,p) R^{-2}  \cdot \frac{\vol B_R(x)}{\vol B_{\frac 34 R}(x)}  \cdot \left[ (\|u\|_{2, B_R(x)}^* )^2 + (\|f\|_{2p, B_R(x)}^*)^2\right] } \\
 & & \cdot \left[ \left( R^{-2} C^2_s(B_R(x))\right)^{\frac{2p}{2p-n}} \left(1+k(p,R)^{\frac{2p}{2p-n}} \right)  +  R^{-2} C^2_s \left(1+ R^{-2} C_s^2  \, k(p,R) \right) \right]^{n/2}
\ean
\end{theorem}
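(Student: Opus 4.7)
The plan is to carry out a Moser iteration on $v = |\nabla u|^2$, with the iteration step driven by the local Sobolev inequality (\ref{L2-Sobolev-r}) for the ball $B_R(x)$, and to initialize the iteration with a Caccioppoli-type $L^2$ bound for $|\nabla u|$. I start from the Bochner formula together with $\Delta u = f$,
$\tfrac{1}{2}\Delta v = |\Hess u|^2 + \langle\nabla f,\nabla u\rangle + \Ric(\nabla u,\nabla u) \ge \langle\nabla f,\nabla u\rangle - \Ric_- v,$
and rewrite the cross term as $\langle\nabla f,\nabla u\rangle = \di(f\nabla u) - f^2$ so it can be handled by integration by parts. Multiplying by $\phi^2 v^{q-1}$ for $q\ge 1$ and a Lipschitz cutoff $\phi$ supported between two concentric balls $B_{r'}\subset B_r\subset B_R(x)$, integrating by parts, and absorbing the resulting cross terms in $\nabla\phi$ and $f$ into $\int \phi^2 v^{q-2}|\nabla v|^2$ via Cauchy--Schwarz yields
$\fint|\nabla(\phi v^{q/2})|^2 \le C(q)\bigl[\fint(|\nabla\phi|^2 + \Ric_-\phi^2)v^q + \fint f^2\phi^2 v^{q-1}\bigr].$
Applying (\ref{L2-Sobolev-r}) on the left replaces the gradient $L^2$-norm by $C_s^{-2}\|\phi v^{q/2}\|_{2n/(n-2)}^{*2}$, where $C_s = C_s(B_R(x))$.

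The core technical step is the absorption of the Ricci and $f$ errors. For the Ricci term I use H\"older with exponent $p$,
$\fint \Ric_-\phi^2 v^q \le \|\Ric_-\|_{p,B_R}^*\cdot\|\phi v^{q/2}\|_{2p/(p-1)}^{*2},$
then interpolate the middle norm between $L^2$ and $L^{2n/(n-2)}$ and apply Young's inequality with conjugate exponents $\tfrac{2p}{2p-n}$ and $\tfrac{2p}{n}$; the $L^{2n/(n-2)}$ piece is absorbed on the left, while the $L^2$ piece contributes a factor of order $(R^{-2}C_s^2)^{2p/(2p-n)}k(p,R)^{2p/(2p-n)}$ to the iteration constant. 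This produces the first summand of the bracket. A second, cruder absorption of the Ricci term uses Jensen $\|\cdot\|_{2p/(p-1)}^* \le \|\cdot\|_{2n/(n-2)}^*$ (valid since $p>n/2$) to transfer everything to the left-hand side, followed by the rearrangement $(1-x)^{-1}\le 1+2x$; this yields the alternative factor $R^{-2}C_s^2(1+R^{-2}C_s^2 k(p,R))$, the second summand of the bracket. The $f^2\phi^2 v^{q-1}$ term is treated via the shift $\bar v = v + \|f\|_{2p,B_R}^{*2}$: the pointwise inequality $v^{q-1}\le \bar v^q/(q\|f\|_{2p}^{*2})$ converts it into a copy of the Ricci term with coefficient $1$ in place of $\|\Ric_-\|_p^*$, producing the $+1$ inside the factors $1+k^{2p/(2p-n)}$ and $1+R^{-2}C_s^2 k$.

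The resulting one-step Moser inequality reads $\|\bar v\|_{q\chi,B_{r'}}^{*q}\le A(q)\,\|\bar v\|_{q,B_r}^{*q}$, with $\chi = n/(n-2)$ and $A(q)$ equal to a $q$-dependent constant times the bracket displayed in the theorem. Iterating on shrinking radii $r_k\downarrow R/2$ with $q_k = \chi^k$, and summing the geometric series $\sum_k 1/q_k = \chi/(\chi-1) = n/2$, gives $\sup_{B_{R/2}}\bar v \le C\,A^{n/2}\fint_{B_{3R/4}}\bar v,$ which is the origin of the exponent $n/2$ in the statement. The base of the iteration is provided by a standard Caccioppoli estimate: testing $\Delta u = f$ against $u\eta^2$ for a cutoff $\eta$ between $B_{3R/4}$ and $B_R$ gives
$\fint_{B_{3R/4}}|\nabla u|^2 \le CR^{-2}\bigl((\|u\|_{2,B_R}^*)^2 + R^2(\|f\|_{2,B_R}^*)^2\bigr)\cdot\tfrac{\vol B_R}{\vol B_{3R/4}},$
and Jensen replaces $\|f\|_{2,B_R}^*$ by $\|f\|_{2p,B_R}^*$. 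Combining this with the iterated sup bound and unwinding $\bar v = v + \|f\|_{2p}^{*2}$ yields the theorem.

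The main obstacle is the bookkeeping through the Moser iteration: one must check that $A(q_k)$ grows only polynomially in $q_k$ so that $\prod_k A(q_k)^{1/q_k}$ converges to a constant multiple of $A^{n/2}$ with $A$ equal to the displayed bracket, and that the volume ratios arising at each level telescope into the single factor $\vol B_R/\vol B_{3R/4}$ (using the volume doubling of Remark~\ref{all-r-small}). A secondary subtlety is that the $f^2 v^{q-1}$ term does not fit into the iteration with $v$ alone: a direct H\"older estimate produces $\|f\|_{2q}^*$ with an iteration-dependent exponent that cannot be repackaged into the single $\|f\|_{2p}^*$ of the conclusion, which is precisely why the shift $v\mapsto \bar v$ is needed to uniformly absorb all $f$-contributions.
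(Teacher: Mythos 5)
Your overall strategy matches the paper's: shift $v \mapsto \bar v = |\nabla u|^2 + (\|f\|^*_{2p})^2$, Moser-iterate with the local Sobolev inequality, absorb the Ricci term by interpolation plus Young (yielding the $(C_s^2\|\Ric_-\|^*_p)^{2p/(2p-n)}$ factor), absorb $f$ via the shift with unit coefficient, sum the geometric series to get the exponent $n/2$, and initialize with a Caccioppoli bound. This is exactly what the paper does, and your remark that the shift is \emph{forced} (a direct H\"older on $f^2v^{q-1}$ would produce an iteration-dependent $L$-exponent on $f$) is well taken.

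However, two points diverge from the paper and one of them is a genuine gap in your narrative. First, the paper does \emph{not} obtain the second bracket summand $R^{-2}C_s^2\bigl(1+R^{-2}C_s^2\,k(p,R)\bigr)$ from a "cruder" absorption of the Ricci term. It comes from controlling $\Delta\eta$ for the radial cutoff $\eta = \varphi\circ d$: the paper writes $\int|\nabla(\eta v^{q/2})|^2 = -\int\eta v^{q/2}\Delta(\eta v^{q/2})$, so a $v^q\Delta\eta$ term appears, and $\Delta\eta$ is bounded below by $-|\varphi''| - |\varphi'|/r - |\varphi'|\psi$ using the Laplacian comparison error $\psi = (\Delta d - \tfrac{n-1}{d})_+$. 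Estimate (\ref{Laplacian-com-average-norm}) then controls $\int\psi\eta|\varphi'|v^q$, and it is this step (\ref{psi-term}) that produces $C_s^4\|\Ric_-\|^*_p$. Your proposed alternative — Jensen $\|\cdot\|^*_{2p/(p-1)} \le \|\cdot\|^*_{2n/(n-2)}$, absorb on the left, then apply $(1-x)^{-1}\le 1+2x$ — requires $C(q)\,C_s^2\|\Ric_-\|^*_p < \tfrac12$ at every level $q=q_k$, which fails for large $q_k$ since $C(q_k)$ grows; no smallness of $k$ is hypothesized in the theorem. So that sentence should be deleted. Second, and in your favor: by integrating by parts in the form $\int\phi^2 v^{q-1}\Delta v = -\int\langle\nabla(\phi^2 v^{q-1}),\nabla v\rangle$ you genuinely never produce a $\Delta\phi$ term, and hence never need the $\psi$ estimate at all. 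Carried out carefully, your route therefore proves the theorem with the \emph{smaller} bracket $R^{-2}C_s^2 + (R^{-2}C_s^2)^{2p/(2p-n)}(1+k^{2p/(2p-n)})$, which certainly implies the stated bound. In short: drop the spurious "second absorption" story, and note explicitly that your choice of integration by parts sidesteps the Laplacian-comparison control of the cutoff that the paper relies on.
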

The estimate follows from the standard Nash-Moser iteration, by using the $L^p$ integrability of $\Ric$ and $f^2$ for $p>\frac{n}{2}$. On the other hand, as we do not assume the harmonicity of $u$ (i.e. $f=0$), and Ricci curvature pointwise lower bound, the proof requires several extra estimates and the Laplacian comparison estimate (\ref{Laplacian-com-average-norm}). This full general version is often needed in applications.  Since a proof is not in the literature, we give a detailed proof here.
\begin{proof}
By scaling we may assume $R=1$. Recall the Bochner formula,
\begin{equation}\label{Bochnerformula}
\frac{1}{2}\triangle|\nabla u|^2=|\Hess u|^2+ \lp \nabla u, \nabla f \rp + \Ric(\nabla u,\nabla u)\ge \lp \nabla u, \nabla f \rp -|\Ric_-||\nabla u|^2.
\end{equation}
Put $$v=|\nabla u|^2 +\left\|
f^2\right\|^*_{p} . $$ Note that when $f$ is constant, one can iterate with $v=|\nabla u|^2$ and the proof simplifies.

 For any function $\eta\in C_0^\infty(B_1(x))$ and constant $q >1$, compute
\begin{eqnarray*}
	\int|\nabla (\eta v^{q/2})|^2&=&-\int\eta v^{q}\Delta\eta -2\int \eta v^{q/2} \lp \nabla \eta, \nabla v^{q/2} \rp -\int\eta^2 v^{q/2}\triangle v^{q/2}\nonumber\\
	& = & \int(2|\nabla \eta |^2 - \eta \Delta\eta)  v^{q} -2 \int v^{q/2} \lp \nabla \eta, \nabla (\eta v^{q/2}) \rp \\ & & - (1-\frac 2q)\int |\nabla (\eta v^{q/2}) - v^{g/2} \nabla \eta|^2 -\frac q2 \int\eta^2 v^{q-1}\triangle v.
\end{eqnarray*}

By regrouping,
 \begin{eqnarray*}
 	\int|\nabla (\eta v^{q/2})|^2 &  = & \frac{q}{2(q-1)}\int( (1+\frac 2q)|\nabla \eta |^2 - \eta \Delta\eta)  v^{q}  \\
 	& & -\frac{1}{q-1}\int v^{q/2} \lp \nabla \eta, \nabla (\eta v^{q/2}) \rp -\frac{q^2}{4(q-1)}\int\eta^2v^{q-1}\triangle v  \\
 	&  \leq &\frac{1}{2}\int|\nabla (\eta v^{q/2})|^2+\frac{1}{2}\frac{q^2+q-1}{(q-1)^2}\int|\nabla\eta|^2 v^q \\ & & - \frac{q}{2(q-1)}\int \eta  v^{q}\Delta\eta  -\frac{q^2}{4(q-1)}\int\eta^2v^{q-1}\triangle v  \nonumber.
 \end{eqnarray*}
 Hence,
\be  \label{grad-cut-comp}
	\int|\nabla (\eta v^{q/2})|^2 \le \frac{q^2+q-1}{(q-1)^2}\int|\nabla\eta|^2 v^q  - \frac{q}{q-1}\int \eta  v^{q}\Delta\eta  -\frac{q^2}{2(q-1)}\int\eta^2v^{q-1}\triangle v.
		\ee
Now plugging (\ref{Bochnerformula}) into (\ref{grad-cut-comp}), we have
\begin{eqnarray*}
	\int|\nabla (\eta v^{q/2})|^2
	&  \leq &\frac{q^2+q-1}{(q-1)^2}\int|\nabla\eta|^2 v^q - \frac{q}{q-1}\int \eta  v^{q}\Delta\eta \\ & &+ \frac{q^2}{q-1}\int\eta^2v^q|\Ric_-|
	  - \frac{q^2}{q-1}\int\eta^2 v^{q-1} \lp \nabla u, \nabla f \rp  \nonumber.
\end{eqnarray*}
For the last term, we have
\ban
\lefteqn{ \int\eta^2 v^{q-1} \lp \nabla u, \nabla f \rp} \\
 & = & - \int \eta^2 v^{q-1} f^2 - 2 \int \eta f  v^{q-1} \lp \nabla u, \nabla \eta \rp  - (q-1)  \int \eta^2 f v^{q-2} \lp \nabla u, \nabla v \rp  \\
& \ge &  - \int 6\eta^2 v^{q-1} f^2 -  \int |\nabla \eta|^2 v^q - \frac{2(q-1)}{q^2} \cdot \frac 18 \int\eta^2|\nabla v^{q/2}|^2 \\
& \ge & - \frac{2(q-1)}{q^2} \cdot \frac 14 \int|\nabla (\eta v^{q/2})|^2  - \int 6\eta^2 v^{q-1} f^2 - (1+ \frac{q-1}{2q^2}) \int |\nabla \eta|^2 v^q.
\ean
To control $\Delta \eta$, we choose a more specific cur-off function. For $0<r<1$,  let $\varphi \in C_0^\infty (R)$ be a cut-off function such that $0\le \varphi \le 1, \ \varphi(t) \equiv 1$ for $t \in [0,r]$, $\varphi(t) \equiv 0$ for $t \ge 1$,  and $\varphi' \le 0$. Then define
\be
\eta (y) = \varphi (d(x,y)),  \label{eta}
\ee
 where $d(x,y)$ is the distance function from $x$.
Thus  $|\nabla \eta |= |\varphi'|$, and
\ban
\Delta \eta  &= & \varphi'' + \varphi' \Delta d =   \varphi'' + \varphi' (\Delta d -\frac{n-1}{d} + \frac{n-1}{d}) \\
& \ge &  \varphi'' + \varphi' (\psi + \frac{n-1}{d}) \ge -|\varphi''| - \frac{|\varphi'|}{r} -|\varphi' | \psi,
\ean
where $\psi = (\Delta d -\frac{n-1}{d})_+$.

Therefore we have, for $ q \ge \frac{n}{n-2}$,
\ban
\lefteqn{ \int|\nabla (\eta v^{q/2})|^2 } \\
& \leq & C(n)q\int \left[ \left(|\varphi''| + \frac{|\varphi'|}{r} +|\varphi' | \psi \right) \eta v^q+ |\varphi'|^2 v^q  + \eta^2  f^2 v^{q-1} + \eta^2|\Ric_-|v^q\right].
\ean
Notice that this formula remains valid for $q=1$.  In fact
\[
|\nabla (\eta v^{1/2})|^2  = \left| v^{1/2} \nabla \eta + \eta \frac{|\nabla u |}{v^{1/2}} \nabla |\nabla u| \right|^2
 \le   2 v |\nabla \eta|^2 + 2 \eta^2 |\Hess u|^2,   \]
and
\begin{eqnarray}
\int\eta^2|\Hess u|^2&=&-\int\nabla_iu(2\eta\nabla_j\eta\nabla_i\nabla_ju+\eta^2\nabla_i\triangle u+\eta^2R_{ij}\nabla_ju)\nonumber\\
&\le&\frac{1}{2}\int\eta^2|\Hess u|^2+3\int|\nabla\eta|^2 v+\int \eta^2 f^2 + \int\eta^2|\Ric_-| v\nonumber.
\end{eqnarray}

Denote $\mu=\frac{n}{n-2}$.   Applying the Sobolev inequality (\ref{Sobolev}),   we obtain for $q\geq \frac{n}{n-2}$ and $q=1$,
\begin{eqnarray}  \label{main-ineq1}
 \\
 &&\bigg(\fint_{B_1(x)}(\eta^2 v^q)^\mu\bigg)^{1/\mu} \nonumber  \\
& \leq &  C^2_s(B_1(x)) C(n)q\fint_{B_1(x)} \left[  \left(|\varphi''| + \frac{|\varphi'|}{r} +|\varphi' | \psi \right) \eta v^q+ |\varphi'|^2 v^q+ f^2 \eta^2 v^{q-1} +|\Ric_-| \eta^2 v^q\right]. \nonumber
\end{eqnarray}
The integration involving Ricci curvature can be estimated as follows. For $p> \frac n2$,
\begin{eqnarray}
\fint_{B_1(x)}|Ric_-|\eta^2 v^q&\leq&\|\Ric_-\|_p^*\cdot\big(\fint_{B_1(x)}(\eta^2v^q)^{\frac{p}{p-1}}\big)^{\frac{p-1}{p}}\nonumber\\
&\leq& \|\Ric_-\|_p^* \bigg(\fint_{B_1(x)}\eta^2v^q\bigg)^{\frac{p-1}{p}a}\cdot\bigg(\fint_{B_1(x)}(\eta^2v^q)^\mu\bigg)^{(1-a)\frac{p-1}{p}}\nonumber\\
&\le& \|\Ric_-\|_p^* \left[ \epsilon\bigg(\fint_{B_1(x)}(\eta^2v^q)^\mu\bigg)^{\frac{1}{\mu}}+\epsilon^{-\frac{(1-a)\mu}{a}}
\cdot\bigg(\fint_{B_1(x)}\eta^2v^q\bigg)\right] \nonumber.
\end{eqnarray}
where $a=a(n,p)=\frac{2p-n}{2(p-1)}>0$ is determined via
$$a+(1-a)\mu=\frac{p}{p-1}.$$
Here we used Young's inequality
$$xy\leq\epsilon x^\gamma+\epsilon^{-\frac{\gamma^*}{\gamma}}y^{\gamma^*},\hspace{0.5cm}\forall x,y\geq 0,\gamma>1,(\gamma^*)^{-1}+\gamma^{-1}=1,$$
where
$$\gamma=\frac{p}{(1-a)(p-1)\mu},\, \ \gamma^*=\frac{p}{(p-1)a}.$$
By setting $\epsilon= \left( 4C(n)q C^2_s)  \|\Ric_-\|_p^*\right)^{-1}$,  we conclude
\ba  \label{Ric-term}
\lefteqn{C(n)q C^2_s  \fint_{B_1(x)}\eta^2|\Ric_-|v^q} \\
 &\le  \frac{1}{4}\bigg(\fint_{B_1(x)}(\eta^2v^q)^\mu\bigg)^{\frac{1}{\mu}}
+ C(n,p) \big(q C^2_s\|\Ric_-\|_p^*\big)^{\frac{2p}{2p-n}}
\cdot\bigg(\fint_{B_1(x)}\eta^2v^q\bigg). \nonumber
\ea

For the term $\fint_{B_1(x)} \eta^2 f^2 v^{q-1}$, since $v \ge \left\|
f^2\right\|^*_{p} $, we have
\[
\fint_{B_1(x)} \eta^2 f^2 v^{q-1} \le \frac{1}{\left\|f^2\right\|^*_{p}}\fint_{B_1(x)} \eta^2 f^2 v^{q} \le  \big(\fint_{B_1(x)}(\eta^2 v^q)^{\frac{p}{p-1}}\big)^{\frac{p-1}{p}}. \]
Now the same argument as above with $\epsilon=\left(4C(n)C^2_s)q \right)^{-1}$ gives
\ba  \label{f-term}
\lefteqn{ C(n)C^2_s q \fint_{B_1(x)}\eta^2 f^2 v^{q-1} }    \\
&\le & \frac{1}{4}\bigg(\fint_{B_1(x)}(\eta^2 v^{q})^\mu\bigg)^{\frac{1}{\mu}}
+  C(n,p)\big(C^2_s q\big)^{\frac{2p}{2p-n}}
\cdot\bigg(\fint_{B_1(x)}\eta^2 v^{q}\bigg). \nonumber
\ea
For the term with $\psi$, using the H\"older inequality and the Laplacian comparison estimate (\ref{Laplacian-com-average-norm}),
\ban
\fint_{B_1(x)} \psi \eta |\varphi'|  v^q&\leq&\|\psi\|_{2p}^*\cdot \|\eta  \varphi' v^q\|^*_{\frac{2p}{2p-1}} \\
& \le & C(n,p) \left(\|\Ric_-\|_p^*\right)^{1/2} \cdot \|\eta  \varphi' v^q\|^*_{\frac{2p}{2p-1}} .
\ean

Note that for $b=\frac{p(n-2)}{n(2p-1)}<1$,
\ban
\|\eta  \varphi' v^q\|^*_{\frac{2p}{2p-1}} & = & \left[ \fint_{B_1(x)} \left(\eta^2  v^q\right)^{b\mu} \left(|\varphi'|^2  v^q\right)^{\frac{p}{2p-1}} \right]^{\frac{2p-1}{2p}}\\
& \leq & \left[ \left(\fint_{B_1(x)} \left(\eta^2  v^q\right)^{\mu} \right)^b \left(\fint_{B_1(x)} \left(|\varphi'|^2  v^q\right)^{\frac{np}{np+2p-n}}\right)^{\frac{np+2p-n}{n(2p-1)}} \right]^{\frac{2p-1}{2p}}\\
& \leq & \left[ \left(\fint_{B_1(x)} \left(\eta^2  v^q\right)^{\mu} \right)^b \left(\fint_{B_1(x)} |\varphi'|^2  v^q\right)^{\frac{p}{2p-1}} \right]^{\frac{2p-1}{2p}} \\
& \le & \epsilon \left(\fint_{B_1(x)} \left(\eta^2  v^q\right)^{\mu} \right)^{1/\mu} + \frac{1}{4\epsilon}\fint_{B_1(x)} |\varphi'|^2  v^q.
\ean
Here we used the fact that $\frac{np}{np+2p-n}<1$ since $p>n/2$.

Choose $\epsilon = (4C(n)C_s^2q C(n,p) \left(\|\Ric_-\|_p^*\right)^{1/2})^{-1}$, we have
\ba \lefteqn{ C(n)C^2_s q \fint_{B_1(x)}\psi \eta |\varphi'|  v^q}  \label{psi-term}  \\
& \le & \frac{1}{4}\bigg(\fint_{B_1(x)}(\eta^2 v^{q})^\mu\bigg)^{\frac{1}{\mu}}
+ (q C(n)C_s^2)^2 C^2(n,p)\|\Ric_-\|_p^* \fint_{B_1(x)} |\varphi'|^2  v^q.  \nonumber
\ea

Plugging the three estimates (\ref{Ric-term}), (\ref{f-term}), (\ref{psi-term}) into the inequality (\ref{main-ineq1}) gives
\begin{eqnarray*}
\lefteqn{\bigg(\fint_{B_1(x)}(\eta^2 v^q)^\mu\bigg)^{1/\mu}} \\
 & \leq& 4 C^2_s C(n)q \left[\fint_{B_1(x)}   \left(|\varphi''| + \frac{|\varphi'|}{r}  \right) \eta v^q+ \left(1+q C_s^2 C^2(n,p)\|\Ric_-\|_p^*\right) \fint |\varphi'|^2 v^q  \right]  \\
& &
+C(n,p)\big( C^2_s q\big)^{\frac{2p}{2p-n}}\left(1+( \|\Ric_-\|_p^*)^{\frac{2p}{2p-n}} \right) \bigg(\fint_{B_1(x)}\eta^2v^q\bigg)\nonumber.
\end{eqnarray*}

Define $q_k=\mu^k$, $k\geq 0$, and $r_{k}=(\frac{3}{4}-\sum_{i=0}^k2^{-i-3})$. Choose cut-off functions $\eta_k=\varphi_k\circ d\in C_0^\infty(B_{r_k}(x))$ such that
$$\eta_k\equiv 1,\hspace{0.3cm}\mbox{ on }B_{r_{k+1}}(x);\hspace{0.3cm}|\varphi'_k|\leq2^{k+5}, \hspace{0.3cm}|\varphi''_k|\leq2^{2k+10}.$$
Then substituting $\eta_k$ into the estimate and running the iteration for any $k\geq 0$ we get
$$\|v\|^*_{\infty,B_{\frac{1}{2}}(x)}\leq C(n,p)  A^{n/2}   \|v\|^*_{1,B_{\frac{3}{4}}(x)},$$
where
$$ A= C^2_s (1+ C_s^2\|\Ric_-\|_p^*) +  C_s^{\frac{4p}{2p-n}} \left(1+( \|\Ric_-\|_p^*)^{\frac{2p}{2p-n}} \right) .
$$

Finally observe that, by integrating by parts, for $\eta\in C^\infty_0(B_{1}(x))$ with $\eta\equiv 1$ in $B_{\frac{3}{4}}(x)$, and $|\nabla \eta | \le 5$, we have
\ban
\fint_{B_{\frac{3}{4}}(x)}|v| & \leq &  \frac{\vol B_1(x)}{\vol B_{\frac 34}(x)}  \fint_{B_{1}(x)} \eta^2(|\nabla u|^2 +\left\|
f^2\right\|^*_{p})  \\
& \leq &    \frac{\vol B_1(x)}{\vol B_{\frac 34}(x)}  \left[  \left\|
f^2\right\|^*_{p} + \fint_{B_{1}(x)} \eta^2 (u^2 +f^2) + 8 \fint_{B_{1}(x)}|\nabla\eta|^2u^2  \right] \\
&  \leq  & 201 \cdot \frac{\vol B_1(x)}{\vol B_{\frac 34}(x)}  \cdot  (\|u\|_2^* )^2 + 2(\|f\|_{2p}^*)^2.
\ean
This gives the gradient estimate.
\end{proof}

Combining (\ref{volume doubling}) and (\ref{L2-Sobolev-r}) with Theorem~\ref{gradient-g}  gives Theorem~\ref{gradient}.

Later on we will need Harnack inequality for harmonic function. Hence we also give a gradient estimate for $\ln u$ as in Cheng-Yau's gradient estimate \cite{cheng-yau}. In the proof we need Li-Schoen's trick of bounding high power by lower power \cite{Li-Schoen84}.
\begin{theorem}
Assume as in above theorem. Let $u$ be a positive harmonic function in $B_R(x)$, then
\begin{equation*}
\sup_{B_{\frac{R}{2}}(x)}|\nabla \ln u|^2 \leq C\left(n,p, R^{-2} C_s^2, k(p,R) \right)  R^{-2}  \frac{\vol B_R(x)}{\vol B_{\frac 45 R}(x)} .
\end{equation*}
\end{theorem}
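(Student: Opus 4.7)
Set $v = \log u$, so $\Delta v = -|\nabla v|^2$, and let $w = |\nabla v|^2$. The Bochner formula together with the Cauchy--Schwarz bound $|\Hess v|^2 \ge (\Delta v)^2/n = w^2/n$ gives the key inequality
\[
\tfrac{1}{2}\Delta w \ge \tfrac{1}{n}w^2 - \langle \nabla v, \nabla w\rangle - |\Ric_-|\,w.
\]
The decisive feature is the quadratic ``reverse'' term $w^2/n$ on the right; it will drive a Nash--Moser iteration in which $\sup_{B_{R/2}} w$ is controlled by any $L^{q_0}$ norm of $w$ with $q_0 > 0$. The drift $\langle \nabla v, \nabla w\rangle$ is estimated pointwise by $w^{1/2}|\nabla w|$, a product that by Cauchy--Schwarz splits cleanly into a piece absorbed into the $w^2/n$ sink and a piece absorbed into the natural Dirichlet energy.

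\textbf{Moser iteration.} For a cutoff $\eta$ (of the form (\ref{eta})) and an exponent $q \ge 1$, multiply the Bochner inequality by $\eta^2 w^{q-1}$, integrate by parts, and apply Young's inequality to the drift. After the standard rearrangement one obtains a model inequality of the form
\[
\int \eta^2 |\nabla w^{q/2}|^2 + \tfrac{q}{4n}\int \eta^2 w^{q+1} \le C(n)q\!\int\!\left[(|\nabla\eta|^2 + \eta|\Delta\eta|) w^q + \eta^2 |\Ric_-| w^q\right].
\]
The Ricci integral is then handled exactly as in (\ref{Ric-term}) by H\"older plus Young, producing, at the cost of a constant depending on $\|\Ric_-\|_p^*$, a small multiple of $\|\eta^2 w^q\|^*_{\mu}$ that will be absorbed after the Sobolev step, while the cutoff-Laplacian term is treated through (\ref{Laplacian-com-average-norm}) as in the proof of Theorem~\ref{gradient-g}. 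Applying the local Sobolev inequality (\ref{L2-Sobolev-r}) to $\eta w^{q/2}$ converts the gradient energy into an $L^{\mu q}$ bound, yielding the reverse-H\"older chain
\[
\|w\|^*_{\mu q,\, B_{\rho'}(x)} \le (\Phi q)^{1/q}\,\|w\|^*_{q,\, B_{\rho}(x)},
\]
with $\Phi = \Phi(n, p, R^{-2}C_s^2, k(p,R))$ uniform in $q$. Iterating $q_k = \mu^k$ along a shrinking sequence of balls and summing geometric series produces
\[
\sup_{B_{R/2}(x)} w \le C\bigl(n,p,R^{-2}C_s^2,k(p,R)\bigr)\, \|w\|^*_{1,\, B_{4R/5}(x)}.
\]

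\textbf{Li--Schoen base step.} The initial $L^1$ bound is extracted directly from $\Delta v = -w$. Choose $\eta \in C_c^\infty(B_R(x))$ with $\eta \equiv 1$ on $B_{4R/5}(x)$ and $|\nabla \eta|\le 10/R$. Integration by parts gives
\[
\int \eta^2 w = -\!\int \eta^2 \Delta v = 2\!\int \eta \,\nabla\eta\cdot\nabla v \le \tfrac{1}{2}\!\int \eta^2 w + 2\!\int |\nabla\eta|^2,
\]
so that $\int_{B_{4R/5}(x)} w \le C(n) R^{-2}\vol B_R(x)$ and hence
\[
\|w\|^*_{1,\,B_{4R/5}(x)} \le C(n)\,R^{-2}\,\frac{\vol B_R(x)}{\vol B_{4R/5}(x)}.
\]
Combined with the Moser estimate this gives the claimed bound.

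\textbf{Main obstacle.} The delicate step is the Moser iteration itself. Because the good term is quadratic in $w$, the drift $\langle \nabla v, \nabla w\rangle = w^{1/2}|\nabla w|$ must be split with sharp Cauchy--Schwarz weights so that both $\int \eta^2 w^{q+1}$ on the left and $\int \eta^2 |\nabla w^{q/2}|^2$ retain enough strength to close against the Sobolev step, and one must verify that the constants from the Ricci Young-inequality step remain uniform as $q \to \infty$ so that $\Phi$ depends only on the stated quantities. Once this bookkeeping is complete, the passage from $R=1$ to general $R$ is routine: rescaling $g \mapsto R^{-2}g$ leaves $k(p,R)$, the scale-invariant Sobolev constant $C_s^2/R^2$, and the volume ratio unchanged.
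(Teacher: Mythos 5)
Your architecture is the same as the paper's (Bochner for $w=|\nabla\ln u|^2$, Moser iteration with the Ricci term handled as in (\ref{Ric-term}), the cutoff Laplacian via the Laplacian comparison, and the $L^1$ bound coming from $\Delta \ln u=-w$), but there is a genuine gap at the heart of your iteration: the ``model inequality'' you assert for every $q\ge 1$, with a full Dirichlet term and a positive multiple of $\int\eta^2w^{q+1}$ on the left, cannot be produced by the manipulations you describe when $q$ is small. Multiplying the Bochner inequality by $\eta^2w^{q-1}$ and integrating by parts, the Dirichlet energy appears with coefficient $\tfrac{2(q-1)}{q^2}$ (it vanishes at $q=1$), while the drift contributes $\int\eta^2w^{q-1}\langle\nabla v,\nabla w\rangle\le\tfrac2q\int\eta^2w^{(q+1)/2}|\nabla w^{q/2}|$; splitting this by Young as $\tfrac{\delta}{q}\int\eta^2|\nabla w^{q/2}|^2+\tfrac{1}{\delta q}\int\eta^2w^{q+1}$, absorbing the second piece into the sink $\tfrac1n\int\eta^2w^{q+1}$ forces $\delta\ge n/q$, while retaining any Dirichlet energy forces $\delta<2(q-1)/q$; these are compatible only for $q>\tfrac n2+1$. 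The paper's sharper treatment of the drift, which integrates by parts using $\Delta\ln u=-w$ as in (\ref{right-second-term}), runs into the analogous constraint $\tfrac1n\ge\tfrac2{l+1}$ and therefore starts the iteration only at $l=2n-1$, which is why its conclusion (\ref{grad-2n-1}) controls $\sup w$ by the $L^{2n-1}$ norm, not by ``any $L^{q_0}$ norm with $q_0>0$'' as you claim.

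Consequently your base step does not connect to your iteration: the $L^1$ estimate $\|w\|^*_{1,B_{4R/5}}\le C(n)R^{-2}\,\vol B_R(x)/\vol B_{4R/5}(x)$ (which is exactly the paper's last step) is not, by itself, an admissible starting norm, and a bridge from the genuine starting exponent (of size comparable to $n$) down to $L^1$ is required. This is precisely where the paper invokes the Li--Schoen trick (the proof of Theorem 2.1 in \cite{Li-Schoen84}), using volume doubling and adjusted ball radii to lower the power from $2n-1$ to $1$. Your write-up names Li--Schoen in a heading but never uses it; as written, the reverse-H\"older chain anchored at $q=1$ is unjustified and the claimed bound does not follow. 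To repair the proof, either insert the Li--Schoen power-lowering argument between the iteration and the $L^1$ bound, or supply a different argument bounding $\|w\|^*_{q_0,B_{3R/4}}$ by $\|w\|^*_{1,B_{4R/5}}$ for the dimension-dependent $q_0$ at which the iteration actually closes.
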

\begin{proof}
By scaling we may assume $R=1$.  Let $h = \ln u, v=|\nabla h|^2$. Then $\Delta h = -v$.  From the Bochner formula,
\begin{eqnarray*} \label{Bochnerformula2}
\frac{1}{2}\triangle|\nabla h|^2 & = & |\Hess h|^2+ \lp \nabla h, \nabla \Delta h \rp + \Ric(\nabla h,\nabla h)  \\
& \ge &  \frac{v^2}{n} - \lp \nabla h, \nabla v \rp - |\Ric_-| v.
\end{eqnarray*}
For any $\eta\in C_0^\infty(B_1(x))$, $l \ge 0$, multiply above by $v^l \eta^2$ and integrate on $B_1(x)$ gives,
\be  \label{integral-Bochner-ineq}
\frac{1}{2} \int v^l \eta^2 \triangle v \ge   \int \frac{ \eta^2 v^{l+2}}{n} - \int v^l \eta^2 \lp \nabla h, \nabla v \rp - \int v^{l+1} \eta^2 |\Ric_-|.
\ee
We compute
\[ \int v^l \eta^2 \lp \nabla h, \nabla v \rp = - \int v^{l+1}\eta^2 \Delta h - l \int v^l \eta^2 \lp \nabla h, \nabla v \rp - 2   \int v^{l+1}\eta \lp \nabla h, \nabla \eta \rp.\]
Hence
\ba
 \int v^l \eta^2 \lp \nabla h, \nabla v \rp  & =  & \frac{1}{l+1} \int  v^{l+2}\eta^2 - \frac{2}{l+1} \int v^{l+1}\eta \lp \nabla h, \nabla \eta \rp \nonumber \\
& \le &  \frac{2}{l+1} \int  v^{l+2}\eta^2 + \frac{1}{l+1} \int v^{l+1}  | \nabla \eta |^2.   \label{right-second-term}
\ea
By (\ref{grad-cut-comp}),
\ba
\lefteqn{\frac{(l+1)^2}{2l}\int v^l \eta^2 \triangle v} \nonumber \\
& & \le  -\int  \left| \nabla \left( \eta v^{\frac{l+1}{2}} \right) \right|^2 + \frac{(l+1)^2 +l}{l^2}  \int  v^{l+1} | \nabla \eta|^2 - \frac{l+1}{l}  \int \eta v^{l+1} \Delta \eta  \label{left-term}
\ea
Plugging (\ref{left-term}) and (\ref{right-second-term}) into (\ref{integral-Bochner-ineq}), we have
\ban
 -\int  \left| \nabla \left( \eta v^{\frac{l+1}{2}} \right) \right|^2 & \ge & \frac{(l+1)^2}{l} \left(\frac 1n - \frac{2}{l+1}\right)  \int  v^{l+2}\eta^2  + \frac{l+1}{l}  \int \eta v^{l+1} \Delta \eta \\
 & & - \frac{2l^2 +4l +1}{l^2}\int v^{l+1}  | \nabla \eta |^2  - \frac{(l+1)^2}{l} \int v^{l+1} \eta^2 |\Ric_-|.
 \ean
When  $l \ge 2n -1$, choose $\eta$ as in (\ref{eta}),  we have
\[
 \int  \left| \nabla \left( \eta v^{\frac{l+1}{2}} \right) \right|^2  \le  C(n) l \int \left[ \left(|\varphi''| + \frac{|\varphi'|}{r} +|\varphi' | \psi \right) \eta v^{l+1}  +  v^{l+1}  |\varphi' |^2  +  v^{l+1} \eta^2 |\Ric_-| \right]
\]

Use Sobolev inequality (\ref{Sobolev}) and estimate as in (\ref{Ric-term}), (\ref{psi-term}) and iterate from $l = 2n-1$ as in Theorem~\ref{gradient-g}, we have
\be
\|v\|^*_{\infty,B_{\frac{1}{2}}(x)}\leq C(n,p)  A^{n/2}   \|v\|^*_{2n-1,B_{\frac{3}{4}}(x)},  \label{grad-2n-1}
\ee
where
$$ A= C^2_s (1+ C_s^2\, \|\Ric_-\|_p^*) +  C_s^{\frac{4p}{2p-n}} \left(1+( \|\Ric_-\|_p^*)^{\frac{2p}{2p-n}} \right) .
$$

Since we have volume doubling, by the proof of Theorem 2.1 in \cite{Li-Schoen84}, we can lower the power $2n-1$ in (\ref{grad-2n-1}) by adjusting the size of the balls. Namely we have
\[
\|v\|^*_{\infty,B_{\frac{1}{4}}(x)}\leq C\left(n,p, C_s^2, \|\Ric_-\|_p^*\right) \,    \|v\|^*_{1,B_{\frac 45}(x)}.
\]
For the $L^1$ bound, since $v = - \Delta h$,
\[
\int_{B_1(x)} \eta^2 v = - \int_{B_1(x)} \eta^2 \Delta h = 2  \int_{B_1(x)}   \eta \lp \nabla \eta, \nabla h \rp  \le \frac 12  \int_{B_1(x)} \eta^2 v + 2 \int_{B_1(x)} | \nabla \eta |^2, \]
where $\eta \in C_0^\infty (B_1(x))$ is a cut-off function with $\eta = 1$ on $B_{\frac 45}(x)$ and $|\nabla \eta| \le 6$. 

Hence \[  \|v\|^*_{1,B_{\frac 45}(x)} \le 144 \,  \frac{\vol B_1(x)}{\vol B_{\frac 45}(x)}.\]
\end{proof}

With Theorem~\ref{gradient} one can prove as in \cite[Theorem 6.4]{PeWe00} the following.

\begin{lemma} \label{cut-off}
For any integer $n$ and $p>\frac{n}{2}$ there exist $\varepsilon$ and $C$ such that the following holds. Let $M$ be a complete $n$-dimensional Riemannian manifold satisfying $k(p,1)\le\varepsilon$. For any metric ball $B_r(x)$ with $\partial B_r(x)\neq\emptyset$, $r\le 1$, there exists $\phi\in C^\infty_0(B_r(x))$ satisfying
\begin{equation*}
0\le\phi\le 1,\, |\nabla\phi|^2+|\triangle \phi|\le Cr^{-2}.
\end{equation*}
\end{lemma}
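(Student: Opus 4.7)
The plan is to construct $\phi$ as a composition $\phi = \chi\circ(-u)$, where $u$ solves a Dirichlet problem on $B_r(x)$ and $\chi\colon\mathbb{R}\to[0,1]$ is a fixed smooth cut-off, adapting the Cheeger--Colding cut-off construction to the integral Ricci setting. By rescaling the metric to $r^{-2}g$ (which preserves the $k(p,1)$-smallness via (\ref{k-2-1})), we may reduce to $r=1$. Let $u$ solve
\begin{equation*}
\Delta u = 2n \text{ on } B_1(x), \qquad u\big|_{\partial B_1(x)}=0,
\end{equation*}
with existence and interior smoothness by standard elliptic theory (approximating $\partial B_1(x)$ by smooth hypersurfaces if necessary).

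The analytic core is a two-sided bound on $u$ with a definite interior gap. Theorem~\ref{Max-prin} applied to $-u$ (using $\Delta(-u)\ge -2n$) immediately yields $0\le -u\le C_1(n,p)$. For the crucial interior lower bound on $-u$, compare with the barrier $v = 1-d(x,\cdot)^2$: using $\Delta(d^2) = 2 + 2d\,\Delta d \le 2n + 2d\,\psi$, where $\psi = (\Delta d - \tfrac{n-1}{d})_+$, the function $w := u+v$ vanishes on $\partial B_1(x)$ and satisfies $\Delta w \ge -2\psi$. Theorem~\ref{Max-prin} (with $q=2p$) combined with the integral Laplacian comparison (\ref{Laplacian-com-average-norm}) gives
\begin{equation*}
\sup_{B_1(x)} w \;\le\; C_2(n,p)\,\|\psi\|^*_{2p,B_1(x)} \;\le\; C_3(n,p)\sqrt{k(p,1)} \;\le\; C_3(n,p)\sqrt{\varepsilon},
\end{equation*}
which is at most $1/4$ once $\varepsilon=\varepsilon(n,p)$ is small. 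Hence $u \le d(x,\cdot)^2 - 3/4$ on $B_1(x)$, and in particular $u \le -1/2$ on $B_{1/2}(x)$.

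Next, Theorem~\ref{gradient} applied to $u$ with $f=2n$, supplemented by local applications on sub-balls that exhaust the support of the eventual cut-off, yields $|\nabla u|^2 \le C_4(n,p)$ on the region $\{-u \ge 1/8\}$. Fix $\chi \in C^\infty(\mathbb{R})$ with $0\le\chi\le 1$, $\chi\equiv 0$ on $(-\infty, 1/8]$, $\chi\equiv 1$ on $[1/4, \infty)$, and $|\chi'|+|\chi''|\le C$, and set $\phi(y) := \chi(-u(y))$. Since $u = 0$ on $\partial B_1(x)$, $\phi$ vanishes in a neighborhood of the boundary, so $\phi\in C^\infty_0(B_1(x))$; since $-u \ge 1/2$ on $B_{1/2}(x)$, $\phi \equiv 1$ there. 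The chain rule then gives
\begin{equation*}
|\nabla\phi|^2 \le |\chi'|^2|\nabla u|^2 \le C, \qquad |\Delta\phi|\le|\chi'|\,|\Delta u| + |\chi''|\,|\nabla u|^2 \le C.
\end{equation*}
Undoing the rescaling supplies the claimed factor $r^{-2}$.

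The main obstacle is the interior negativity step: one must extract a definite gap $u \le -1/2$ on $B_{1/2}(x)$ against the vanishing boundary value, via comparison with $d(x,\cdot)^2$. Because this function is only Lipschitz on the cut locus of $x$, the barrier argument must be made rigorous via the standard Calabi upper support function trick (perturbing the basepoint off the cut locus), and the integral curvature error encoded in $\psi$ is absorbed using (\ref{Laplacian-com-average-norm}) and the smallness of $k(p,1)$. A secondary technical point is the uniform gradient bound on the full support of $\phi$ (not just $B_{1/2}(x)$), handled by covering and a boundary Hölder estimate for $u$ that ensures $\{-u \ge 1/8\}$ is compactly contained in $B_1(x)$ with quantitative inner radius.
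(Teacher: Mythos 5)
Your outline follows the same route the paper intends: the paper proves this lemma by citing \cite[Theorem 6.4]{PeWe00}, i.e.\ the Cheeger--Colding-type construction in which one solves a Poisson equation on the (rescaled) ball, gets two-sided bounds from the maximum principle (Theorem~\ref{Max-prin}) together with a barrier built from $d(x,\cdot)^2$ and the integral Laplacian comparison (\ref{Laplacian-com-average-norm}), bounds the gradient by Theorem~\ref{gradient}, and composes with a fixed smooth function $\chi$. Your rescaling step, the bound $0\le -u\le C(n,p)$, and the interior estimate $-u\ge \tfrac12$ on $B_{1/2}(x)$ via $w=u+1-d^2$, $\Delta w\ge -2\psi$ in the barrier sense, are all correct and are exactly the ingredients the paper has available.

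However, the step you dismiss as a ``secondary technical point'' is in fact the crux, and as written it is a genuine gap. To bound $|\nabla\phi|$ and $|\Delta\phi|$ you need $|\nabla u|\le C(n,p)$ on all of $\operatorname{supp}\phi\subset\{-u\ge 1/8\}$, and Theorem~\ref{gradient} is an interior estimate whose constant degenerates like $\mathrm{dist}(\cdot,\partial B_1(x))^{-2}$; so you need the quantitative containment $\{-u\ge 1/8\}\subset\{d(x,\cdot)\le 1-\delta(n,p)\}$, i.e.\ an \emph{upper} bound on $-u$ near $\partial B_1(x)$. None of the tools in the paper produce this: the comparison $\Delta d\le \frac{n-1}{d}+\psi$ is one-sided, so any barrier of the form $g(d(x,\cdot))$ yields (via Theorem~\ref{Max-prin}) only lower bounds on $-u$ toward the boundary, never decay of $-u$ there --- this is why your barrier argument works for the interior positivity but cannot be reversed. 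The ``boundary H\"older estimate for $u$'' you invoke is not available off the shelf with constants depending only on $(n,p)$: it would require a uniform exterior cone/capacity-density condition for the metric sphere $\partial B_1(x)$, which fails in general (for instance the far set $\{d(x,\cdot)=1\}$ can be a single point, where the variational Dirichlet solution need not even attain the zero boundary value, so $\{-u\ge 1/8\}$ can actually reach the boundary and $\chi(-u)$ is then not compactly supported). So the construction must secure the support property by a different mechanism (this is precisely the delicate part of the cited Cheeger--Colding/Petersen--Wei argument), and your write-up does not supply it.
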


With the (relative) local Sobolev constant estimate (\ref{L1-local-sobolev}), one gets heat kernel upper bound, see e.g. \cite[(2.17)]{Hesisch-Saloff-Coste2001}. With this and the volume doubling (\ref{volume doubling}), Zhang-Zhu obtained Li-Yau's gradient estimate
\cite{Zhang-Zhu2016}. Hence one has
parabolic Harnack inequality.  With this we have the local heat kernel lower bounds as in \cite[Lemma 2.3]{CN12}.  Namely,  we have
\begin{theorem} \label{heat-bounds}
Let $M$ be an $n$-dimensional Riemannian manifold, and $p>n/2$. There is an $\varepsilon \left(
n,p\right) >0$ and  $C\left( n,p\right) >1$ such that if   $k(p,1) \leq \varepsilon$, then for any real number $s$,  $0<r<1$, $x \in M$  and nonnegative solution $u$ of the heat equation in $Q = (s-r^2,s) \times B_r(x)$,
\[
\sup_{Q_-}  u  \le C \inf_{Q_+}  u,
\]
where $Q_- = (s-\frac 34 r^2, s-\frac 12 r^2) \times B_{\frac 12 r} (x)$,  $Q_+ =(s-\frac 14 r^2 , s) \times B_{\frac 12 r} (x).$

The heat kernel $H(x,y,t)$ satisfies the two-sided Gaussian bound
\[
\frac{c_1}{\vol B_{\sqrt{t}}(x)} e^{-\frac{d^2(x,y)}{c_2t}} \le H(x,y,t) \le \frac{C_1}{\vol B_{\sqrt{t}}(x)} e^{-\frac{d^2(x,y)}{C_2t}}
\]
for all $t \in (0, 1)$ and $x, y \in M$.
\end{theorem}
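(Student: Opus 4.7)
The plan is to bootstrap both conclusions from the two scale-invariant ingredients that have now become available on every ball of radius $r\le 1$: the local $L^1$ (hence $L^2$) Sobolev inequality (\ref{L2-Sobolev-r}) coming from our main Theorem~\ref{local-iso}, and the volume doubling (\ref{volume doubling}). By Remark~\ref{all-r-small}, the smallness hypothesis $k(p,1)\le\varepsilon$ passes to $k(p,r)\le\varepsilon_0$ for every $r\le 1$, after shrinking $\varepsilon$ once and for all depending on $n$ and $p$. Thus all constants in the arguments below can be chosen uniformly in the scale.

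First I would establish the parabolic Harnack inequality. As noted in the paragraph preceding the statement, Zhang--Zhu \cite{Zhang-Zhu2016} prove a Li--Yau differential Harnack inequality of the form $|\nabla\log u|^2-\partial_t\log u \le C(n,p)/t$ (plus a correction absorbed into the constant because $k(p,r)$ is uniformly small) on parabolic cylinders of size $r\le 1$, using exactly the two ingredients above. Once this differential inequality is in hand, the comparison $\sup_{Q_-}u\le C\inf_{Q_+}u$ is obtained by the classical step of integrating it along a minimizing space-time curve joining a point of $Q_+$ to a point of $Q_-$, the geometry of $Q_{\pm}$ giving $d(y_-,y_+)\le r$ and $t_+-t_-\ge r^2/4$. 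No new geometric input is needed here.

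For the two-sided Gaussian bound I would follow the now classical Saloff-Coste/Grigoryan route. The $L^2$ Sobolev inequality (\ref{L2-Sobolev-r}) alone, fed into a Nash--Moser iteration on the heat equation (exactly as carried out in Theorem~\ref{gradient-g} but in the parabolic form), yields the on-diagonal upper bound $H(x,x,t)\le C_1/\vol B_{\sqrt{t}}(x)$ for $t\in(0,1)$. Davies's weighted $L^2$ perturbation trick, together with the volume doubling (\ref{volume doubling}), upgrades this to the full Gaussian upper bound $H(x,y,t)\le C_1\vol B_{\sqrt{t}}(x)^{-1}\exp(-d(x,y)^2/(C_2t))$. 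The lower Gaussian bound is then extracted by iterating the parabolic Harnack inequality along a chain of $\sim d(x,y)^2/t$ space-time balls of radius $\sim\sqrt{t}$ connecting $(x,0)$ to $(y,t)$, starting from the on-diagonal lower bound $H(x,x,t)\ge c_1/\vol B_{\sqrt{t}}(x)$, which one recovers from the upper bound together with the conservation identity $\int H(x,\cdot,t)=1$ and Cauchy--Schwarz, following the argument of \cite{CN12}.

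The main obstacle is bookkeeping rather than geometry: one has to verify that the constants appearing in the Li--Yau estimate, in the Harnack chain, and in the Davies perturbation depend only on $n$ and $p$, and that every scale $r\le 1$ that shows up (balls $B_{\sqrt{t}}$ for $t\le 1$, their doublings, intermediate radii in the Harnack chain) still satisfies $k(p,r)\le\varepsilon_0$. This uniformity is exactly what Remark~\ref{all-r-small} provides on all scales $r\le 1$, and it is precisely the reason the conclusions are stated for $t\in(0,1)$ and $r\le 1$ and not globally.
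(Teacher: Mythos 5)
Your proposal is correct and matches the paper's route essentially step for step: the paper likewise derives the heat-kernel upper bound from the local Sobolev inequality (citing Hebisch--Saloff-Coste, whose argument is the Nash--Moser/Davies method you describe), cites Zhang--Zhu for the Li--Yau gradient estimate built from that upper bound plus volume doubling, deduces the parabolic Harnack inequality by integrating Li--Yau along space-time paths, and then obtains the Gaussian lower bound by Harnack chaining as in Colding--Naber. The only cosmetic difference is that the paper presents this as a short sequence of citations rather than spelling out the Davies perturbation and the Cauchy--Schwarz on-diagonal lower-bound step, which you correctly unpack.
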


For our purpose, we need two-sided bound on the Dirichlet heat kernels of the balls. Let $H^B(x,y,t)$ be the Dirichlet heat kernel of the ball $B_\rho (x)$  with $t \in (0, 1)$ and $\rho \ge \sqrt{t}$. Note that  $H^B(x,y,t) \le H(x,y,t)$. With the local volume doubling and Poincare inequality, by  \cite[(3.4)]{Hesisch-Saloff-Coste2001}, there exist constants (depending only on the constants from the volume doubling and Poincare inequality) $a$, $\tau$ (small),  $A$(large) and $c$ such that \[
H^B(x,y,t) \le \frac{c}{\vol B_{\sqrt{t}}(x) }
\]
for $y \in B_{a\sqrt{t}}(x)$, $t \in (0, \tau)$,  $\rho \ge A \sqrt{t}$. By making $\varepsilon$ smaller (and a rescaling argument as at the end of the proof of Theorem \ref{local-iso}) we obtain

\begin{theorem}[Dirichlet Heat kernel upper and lower bounds]
For any integer $n$ and $p>\frac{n}{2}$ there exist $\varepsilon$ and $C$ such that the following holds. Let $M$ be a complete $n$-dimensional Riemannian manifold satisfying $k(p,1)\le\varepsilon$. Let $H^{B_r(x)}(x,y,t)$ be the Dirichlet heat kernel of the ball $B_r(x)$.  Then
\be
H^{B_r(x)}(x,y,t) \le \frac{C}{\vol B_{\sqrt{t}}(x)} e^{-\frac{d^2(x,y)}{5t}},\ \forall x, y \in M \ \mbox{with} \  0<t \le 1
\ee
and
\be
H^{B_r(x)}(x, y, t) \ge     \frac{C^{-1}}{\vol B_{\sqrt{t}}(x)}, \ 0<t \le \frac{1}{2}  r^2, \ y \in B_{10\sqrt{t}}(x).
\ee
\end{theorem}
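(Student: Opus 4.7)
The plan is to obtain the two bounds separately, each by combining tools already developed in the paper: domain monotonicity of heat kernels, the two-sided global bounds and parabolic Harnack inequality of Theorem~\ref{heat-bounds}, the near-diagonal estimate quoted from \cite[(3.4)]{Hesisch-Saloff-Coste2001}, and the rescaling device used at the end of the proof of Theorem~\ref{local-iso}.

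For the upper bound, I would first use the maximum principle to get $H^{B_r(x)}(x,y,t)\le H(x,y,t)$, so that Theorem~\ref{heat-bounds} already yields an estimate of the form $\frac{C_1}{\vol B_{\sqrt t}(x)}\,e^{-d^2(x,y)/(C_2 t)}$ with $C_1, C_2$ depending only on $n,p$. To replace $C_2$ by the specific value $5$, I would invoke Davies' exponential perturbation method: using the normalized local $L^2$-Sobolev inequality (\ref{L2-Sobolev-r}) and volume doubling (\ref{volume doubling}), one conjugates the heat semigroup by $e^{\lambda\phi}$ for bounded Lipschitz $\phi$ with $|\nabla\phi|\le 1$, derives a weighted Nash-type differential inequality for $\int H^{B_r(x)}(\cdot,y,t)^2 e^{2\lambda\phi}$, and optimizes over $\lambda$. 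This sharpens the Gaussian constant to any value strictly greater than $4$—hence in particular to $5$—at the cost only of enlarging the prefactor, which remains a function of $n$ and $p$ alone.

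For the lower bound, the starting point is the near-diagonal estimate $H^{B_r(x)}(x,y,t)\ge c/\vol B_{\sqrt t}(x)$ valid for $y\in B_{a\sqrt t}(x)$, $t\in(0,\tau)$ and $r\ge A\sqrt t$, which follows from the quoted consequence of \cite{Hesisch-Saloff-Coste2001} together with local volume doubling (\ref{volume doubling}) and the $L^2$-Poincar\'e inequality implied by (\ref{L2-Sobolev-r}) via H\"older. To extend the admissible range of $y$ from $B_{a\sqrt t}(x)$ to the full $B_{10\sqrt t}(x)$, I would apply the parabolic Harnack inequality of Theorem~\ref{heat-bounds} along a chain of overlapping parabolic cylinders of radius comparable to $\sqrt t$ joining $(x,t/2)$ to $(y,t)$; the length of the chain is bounded by a constant $N=N(n)$, producing only a uniform multiplicative loss $C^{-N}$. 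Finally, to remove the small-time constraint $t<\tau$ and the condition $r\ge A\sqrt t$, I would rescale the metric by a factor $\lambda^{-2}$ as at the end of the proof of Theorem~\ref{local-iso}; under this rescaling $k(p,1)$ transforms according to (\ref{k-2-1}) and (\ref{k-1-2}), and shrinking $\varepsilon$ suitably ensures that $k(p,1)\le\varepsilon$ is preserved on the rescaled manifold, after which the near-diagonal bound and Harnack chaining cover the full stated range.

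The main obstacle is sharpening the Gaussian constant to the prescribed value $5$ in the upper bound: the Davies argument must be executed using only the normalized Sobolev inequality (\ref{L2-Sobolev-r}) and doubling (\ref{volume doubling}), without any pointwise Ricci curvature bound, so that the final constants depend only on $n$ and $p$. All remaining ingredients—domain monotonicity, Harnack chaining, the quoted near-diagonal bound, and the rescaling—are routine applications of results already established in the paper.
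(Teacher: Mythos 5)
Your overall route is the same as the paper's, which only sketches the argument: the upper bound comes from domain monotonicity $H^{B_r(x)}\le H$ together with the global Gaussian upper bound furnished by the local Sobolev estimate via \cite[(2.17)]{Hesisch-Saloff-Coste2001} (so the rate $5$ is inherited from the citation rather than re-derived; your Davies-type sharpening from (\ref{L2-Sobolev-r}) and (\ref{volume doubling}) is an acceptable self-contained substitute), and the lower bound comes from the near-diagonal Dirichlet estimate of \cite[(3.4)]{Hesisch-Saloff-Coste2001} under local doubling and Poincar\'e, plus a smaller $\varepsilon$ and rescaling.

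There is, however, a genuine gap in your treatment of the lower bound. The constraints $\rho\ge A\sqrt{t}$ and $y\in B_{a\sqrt{t}}(x)$ in the quoted near-diagonal estimate are scale invariant: rescaling the metric by $\lambda^{-2}$ multiplies $r$, $d(x,y)$ and $\sqrt{t}$ by the same factor, so it can remove the absolute smallness condition $t<\tau$ (this is what the paper's rescaling and the shrinking of $\varepsilon$ are for), but it cannot convert $r\ge A\sqrt{t}$ into $t\le\frac12 r^2$, nor enlarge $a$ to $10$. Those enlargements must come from the parabolic Harnack inequality of Theorem~\ref{heat-bounds}, applied not only in space (as you propose) but also in time: one starts from the near-diagonal bound at an earlier time $s$ with $A\sqrt{s}\le r$ and chains forward to time $t$ through cylinders that remain inside $B_r(x)$. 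This is also where your chaining as described breaks down: in the regime where $r$ is comparable to $\sqrt{t}$, cylinders of radius comparable to $\sqrt{t}$ do not fit inside $B_r(x)$, and no chain of uniformly bounded length $N(n)$ can reach points $y$ arbitrarily close to $\partial B_r(x)$, where the Dirichlet kernel vanishes; whatever constant one obtains necessarily depends on the proportion of $r$ by which $y$ stays away from the boundary (in the paper's application only $y\in B_{r/2}(x)$ is used). So the chaining must be set up at scale comparable to $r/A$, with the chain kept a definite fraction of $r$ away from $\partial B_r(x)$, rather than delegated to rescaling.
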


This Dirichlet  heat kernel upper and lower bounds give the quantitive mean value inequality.

\begin{proposition}  \label{p-mean} Under the assumption above, let $u$ be a nonnegative function satisfying
\[
\frac{\partial}{\partial t} u \ge \Delta u - f,\]
where $f$ is a nonnegative space-time function.
Then,  for $q > \frac n2$,
\be
\fint_{B_{\frac{1}{2} r} (x)} u(\cdot, 0) d \vol  \le C u(x, r^2) + C(n,p,q)\,  r^2 \sup_{t \in [0, r^2]}  \|f(t)\|^*_{q, B_r(x)}
\ee
for all $x \in M$, $r \le 1 $.
\end{proposition}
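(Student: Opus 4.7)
The plan is to carry out the standard dual heat equation argument against the Dirichlet heat kernel provided by the preceding theorem. Set $T = r^2$ and, enlarging the ball slightly if necessary so that the heat kernel lower bound applies at time $T$, work with $H^B(x,y,s)$ on $B = B_{r'}(x)$ for some $r' \in [r, 2r]$. Consider $v(y,t) = H^B(x,y, T-t)$, which vanishes on $\partial B$ and satisfies the backward heat equation $\partial_t v = -\Delta v$, with $v(\cdot, T)$ a point mass at $x$. Differentiating $\int_B u(\cdot,t)\,v(\cdot,t)$ in $t$, integrating by parts (the boundary terms vanish since $v|_{\partial B}=0$), and using $\partial_t u - \Delta u \ge -f$, then integrating $t$ from $0$ to $T$ yields the duality identity
\[
u(x, r^2) + \int_0^{r^2}\!\int_B f\,v\, dy\, dt \;\ge\; \int_B u(y,0)\, H^B(x,y,r^2)\, dy .
\]

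For the right-hand side I invoke the Dirichlet heat kernel lower bound: for $y \in B_{r/2}(x) \subset B_{10\sqrt{T}}(x)$ one has $H^B(x,y,r^2) \ge C^{-1}/\vol B_r(x)$. Combined with $\vol B_r(x) \le C\vol B_{r/2}(x)$ coming from \eqref{volume doubling}, this produces the desired $C^{-1}\fint_{B_{r/2}(x)} u(\cdot, 0)$ on the right-hand side of the duality identity.

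For the $f$-term the plan is to apply H\"older in space with exponents $q$ and $q/(q-1)$ against the Gaussian upper bound $H^B(x,y,s) \le C\vol B_{\sqrt s}(x)^{-1}\exp(-d^2(x,y)/5s)$. Splitting $B$ into annuli $B_{2^k\sqrt{s}}(x)\setminus B_{2^{k-1}\sqrt{s}}(x)$ and using volume doubling $\vol B_{2^k\sqrt{s}}(x)\le C 2^{kn}\vol B_{\sqrt{s}}(x)$ to sum the resulting Gaussian series yields
\[
\Big(\int_B H^B(x,y,s)^{q/(q-1)}\,dy\Big)^{(q-1)/q} \le \frac{C}{\vol B_{\sqrt s}(x)^{1/q}}.
\]
The reverse doubling bound $\vol B_{\sqrt s}(x) \ge c(\sqrt s/r)^n \vol B_r(x)$ for $\sqrt s \le r$, also a consequence of \eqref{volume doubling}, then converts the remaining time integral into $\int_0^{r^2} s^{-n/(2q)}\,ds$. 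This is finite precisely because $q > n/2$ and equals $Cr^{2-n/q}$. Putting everything together gives
\[
\int_0^{r^2}\!\int_B f\,v\, dy\, dt \;\le\; C r^2\, \sup_{t\in[0,r^2]} \|f(t)\|^*_{q, B_r(x)},
\]
which combined with the lower bound from the previous paragraph produces the claimed mean value inequality.

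The main obstacle is the H\"older--Gaussian computation in the third paragraph: assembling the exponents and the annular decomposition so that the time integrand is $s^{-n/(2q)}$, whose integrability on $[0,r^2]$ is exactly the hypothesis $q > n/2$. All the other ingredients --- the duality, the two-sided Dirichlet heat kernel bounds, and the volume doubling --- are already in place from the preceding parts of the paper.
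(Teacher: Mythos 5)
Your proposal follows the paper's own proof in all essentials: differentiate $\int_B u(\cdot,t)\,H^{B}(x,\cdot,T-t)$ in $t$, integrate from $0$ to $T=r^2$, bound the forcing term by H\"older against the Gaussian upper bound of the Dirichlet heat kernel together with volume doubling (\ref{volume doubling}) (this is exactly where $q>\frac n2$ enters, via $\int_0^{r^2}(r^2-t)^{-\frac{n}{2q}}dt\le Cr^{2-\frac nq}$), and use the Dirichlet kernel lower bound near the center plus doubling to produce $C^{-1}\fint_{B_{r/2}(x)}u(\cdot,0)$. One small point in the duality step: the boundary term $\int_{\partial B}u\,\partial_\nu v$ does not vanish just because $v|_{\partial B}=0$; rather it is nonpositive (since $u\ge 0$ and $\partial_\nu v\le 0$ on $\partial B$), which is what makes the inequality go the right way.

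The genuine gap is caused by your enlargement of the ball to $B=B_{r'}(x)$ with $r'\in[r,2r]$. With that choice the H\"older step produces $\|f(t)\|_{q,B_{r'}(x)}=\vol(B_{r'}(x))^{1/q}\,\|f(t)\|^*_{q,B_{r'}(x)}$, i.e.\ the normalized norm of $f$ on the \emph{larger} ball, and this is not dominated by $\|f(t)\|^*_{q,B_r(x)}$: the size of $f$ on the annulus $B_{r'}(x)\setminus B_r(x)$ is completely unconstrained, so your final inequality with the norm over $B_r(x)$ does not follow as written (you would only get the weaker statement with $\|f(t)\|^*_{q,B_{2r}(x)}$). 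Obtaining the norm on exactly the ball where the mean value is taken is precisely the reason the paper insists on using the Dirichlet heat kernel of $B_r(x)$ itself (see the remark following the proposition). The repair is to run the whole argument on $B_r(x)$, as the paper does, and to treat the time range of the lower bound as a constants issue: the restriction $t\le\frac12 r^2$ in the stated Dirichlet lower bound can be relaxed to $t\le r^2$ (with a larger constant) by the same parabolic Harnack chaining that produces it, so the on-diagonal lower bound for $H^{B_r(x)}(x,\cdot,r^2)$ on $B_{r/2}(x)$ is available without changing the ball.
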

\begin{remark} For our application it's crucial that the norm of $f$ is a normalized local norm instead of the global norm in \cite{TiZh13}. It recovers Lemma 2.1 in \cite{CN12}, where it is proven when $f$ is constant. The key here is to use Dirichlet heat kernel of balls.
	\end{remark}
\begin{proof}
Compute \ba  \label{Duhamel}
\lefteqn{ \frac{d}{dt} \int_{B_r(x)} u(y,t) H^{B_r(x)}(x, y, r^2-t) d \vol (y) } \\
&  = & \int_{B_r(x)} \left[ H^{B_r(x)}(x,y,r^2-t) \left( \frac{\partial}{\partial t} - \Delta \right) u(y,t) \right] d\vol (y)  \nonumber \\
& \ge &  - \int_{B_r(x)} \left[ H^{B_r(x)}(x,y,r^2-t) f(y,t)  \right] d\vol (y)  \nonumber
\ea
By the upper bound of $ H^{B_r(x)}(x,y, r^2-t)$, we have, for $q>1$,
\ban \lefteqn{ \int_{B_r(x)} \left[ H^{B_r(x)}(x,y,r^2-t) f(y,t)  \right] d\vol (y)} \\
 & \le & \|f(t)\|_{q, B_r(x)} \,  \|H^{B_r(x)}(x,y,r^2-t)\|_{\frac{q}{q-1}, B_r(x)}  \\
 & \le &   \frac{C}{\vol B_{\sqrt{r^2-t}}(x)} \|f(t)\|_{q, B_r(x)} \left( \int_{B_r(x)}  e^{- \frac{q}{q-1}\frac{d^2(x,y)}{5(r^2-t)}}
 d\vol (y) \right)^{1-\frac 1q} \\
 & \le & C \left(\vol B_{\sqrt{r^2-t}}(x) \right)^{-\frac 1q} \|f(t)\|_{q, B_r(x)} \\
 & = & C \left( \frac{\vol B_r(x)}{\vol B_{\sqrt{r^2-t}}(x)}\right)^{\frac 1q}\|f(t)\|^*_{q, B_r(x)} \\
 & \le & C r^{\frac nq} (r^2-t)^{-\frac{n}{2q}}  \|f(t)\|^*_{q, B_r(x)}.
 \ean
 In the last step we used the  volume doubling property  (\ref{volume doubling}).

 Now integrate (\ref{Duhamel}) from $0$ to $r^2$ gives
 \ban
 u(x, r^2) & \ge &  \int_{B_r(x)} u(y,0) H^{B_r(x)}(x, y, r^2) d \vol (y) - Cr^{\frac nq} \int_0^{r^2} (r^2-t)^{-\frac{n}{2q}}  \|f(t)\|^*_{q, B_r(x)} dt  \\
 & \ge & C^{-1} \fint_{B_{\frac{1}{2} r}(x)} u(y,0) d\vol (y) - C(n,p,q) r^2 \sup_{t \in [0, r^2]}  \|f(t)\|^*_{q, B_r(x)}.
 \ean
 Here we used the lower bound for $ H^{B_r(x)}(x, y, r^2)$ on $B_{\frac{1}{2} r}(x)$ and $q > \frac n2$.
\end{proof}

\begin{corollary} \label{Lap-mean}
	Assume as above.  Let $u$ be a nonnegative function satisfying
	\[ \Delta u \le f. \]
	Then,  for $q > \frac n2$,
	\be
	\fint_{B_{\frac{1}{2} r} (x)} u\, d \vol  \le C\left( u(x) + r^2 \|f_+\|^*_{q, B_r(x)} \right)
	\ee
	for all $x \in M$, $r \le 1 $.
\end{corollary}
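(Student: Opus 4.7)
The plan is to deduce Corollary~\ref{Lap-mean} as an immediate consequence of the parabolic mean value inequality in Proposition~\ref{p-mean}, by regarding the time-independent function $u$ as a (trivial) solution of a parabolic differential inequality.

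More precisely, I would first extend $u$ and $f$ to space-time functions on $M \times [0, r^2]$ by setting them to be independent of $t$. Since $u$ is nonnegative and satisfies $\Delta u \le f \le f_+$, where $f_+ = \max\{f, 0\} \ge 0$, we have
\begin{equation*}
\frac{\partial}{\partial t} u - \Delta u = -\Delta u \ge -f_+,
\end{equation*}
so $u$ satisfies the parabolic inequality $\frac{\partial}{\partial t} u \ge \Delta u - f_+$ on $M \times [0, r^2]$ with the nonnegative space-time function $f_+$ in place of $f$. This verifies the hypotheses of Proposition~\ref{p-mean}.

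Applying Proposition~\ref{p-mean} directly yields, for $q > n/2$ and $r \le 1$,
\begin{equation*}
\fint_{B_{\frac{1}{2} r} (x)} u\, d \vol = \fint_{B_{\frac{1}{2} r} (x)} u(\cdot, 0)\, d \vol \le C\, u(x, r^2) + C(n,p,q)\, r^2 \sup_{t \in [0, r^2]} \|f_+(t)\|^*_{q, B_r(x)}.
\end{equation*}
Because $u$ and $f_+$ are time-independent, we have $u(x, r^2) = u(x)$ and the supremum collapses to $\|f_+\|^*_{q, B_r(x)}$, giving the stated inequality.

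There is no real obstacle here: the only small point to be careful about is the sign convention (ensuring that we feed the nonnegative function $f_+$, rather than $f$ itself, into the proposition so that the nonnegativity hypothesis on the forcing term is respected). Once this is observed, the corollary follows in one line from Proposition~\ref{p-mean}.
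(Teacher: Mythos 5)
Your proof is correct and is precisely the intended derivation: the paper states this as a corollary of Proposition~\ref{p-mean} without further proof, and the step of regarding the time-independent $u$ as a solution of $\partial_t u \ge \Delta u - f_+$ (noting $\Delta u \le f \le f_+$ and $f_+ \ge 0$) is exactly what is needed to invoke the proposition.
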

This is the $L^1$ Harnack inequality. For the Euclidean case, see e.g. \cite[Theorem 4.15]{Han-Li-book}. We would like to thank Ruobin Zhang for this reference.

With the above tools, we can extend Colding-Naber's
$L^2$ Hessian estimate for the parabolic approximation of the distance function to the integral curvature setting without essentail difficuties.  In the noncollapsed case it is done in \cite{TiZh13}, see also \cite{Zhang-Zhu15}.

In what follows, we always assume $p>\frac{n}{2}$ and  $M$ is  a complete $n$-dimensional Riemannian manifold satisfying $k(p,1)\le\varepsilon(n,p)$  for the $\varepsilon$ so the results above all hold.

Fix two points $y_-, y_+$ in $M^n$, the excess is \[
e(x) = d(y_-, x) + d(y_+,x) - d(y_-, y_+). \]
Define
\[
b_+(x) = d(y_+, x) -  d(y_-, y_+), \ \ b_-(x)  = d(y_-, x) . \] Hence $e(x) = b_+(x) +  b_-(x) $.
Note that \[
\Delta b_{\pm} (x)  \le \frac{n-1}{d(x, y_{\pm})} + \psi_{\pm} ,  \] where $\psi_{\pm} =\left(  \Delta d(y_{\pm}, x)  -  \frac{n-1}{d(x, y_{\pm})} \right)_+$ is the error term of the Laplacian comparison.

Denote $d_0 = d(y_-, y_+)$. Without loss of generality, assume $d_0 \le 1$. Denote by $A_{r_1, r_2} = A_{r_1d_0, r_2d_0}(\{y_-, y_+\})$ the annulus for the set $\{y_-, y_+\}$, with $0 <r_1 <r_2 $.
Then Corollary~\ref{Lap-mean} and the Laplacian comparison estimate (\ref{Laplacian-com-average-norm}) gives
\begin{theorem}
 Fix some small postive constant  $\delta >0$.   There exist $\bar{\epsilon} = \bar{\epsilon}(n,p,\delta)$  and $C = C(n,p,\delta)$ such that  for all $0 < \epsilon < \bar{\epsilon} $, $x \in A_{\frac{\delta}{4}, 16}$,
\[
\fint_{B_{\epsilon d_0}(x) } e(y) dy \le C \left[ e(x) + (\epsilon d_0)^2 ( \|\psi_-\|^*_{2p, d_0}  +  \|\psi_+\|^*_{2p, d_0} ) \right] \le C \left[ e(x) + \epsilon^2  d_0  \right].
\]
In particular, if $e(x) \le \epsilon^2 d_0$, then
\[
e(y)  \le C \epsilon^{1+ \frac {1}{n+1}} d_0, \ \forall y  \in B_{\frac 12 \epsilon d_0}(x) . \]
\end{theorem}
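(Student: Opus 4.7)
The plan is to apply the $L^1$ Harnack-type inequality, Corollary~\ref{Lap-mean}, to the nonnegative excess function $e$, and then derive the pointwise bound from the averaged estimate by a Chebyshev-type argument combined with the $2$-Lipschitz property of $e$ and volume doubling. The main input is a differential inequality for $e$ coming from the Laplacian comparison, and the main technical obstacle will be controlling the $\psi_\pm$ contribution on the small ball $B_{\epsilon d_0}(x)$ via the integral Laplacian comparison (\ref{Laplacian-com-average-norm}), which is naturally stated on balls centered at the basepoints $y_\pm$.

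First I would derive a pointwise differential inequality for $e$ on $B_{2\epsilon d_0}(x)$. Since $x \in A_{\delta/4, 16}$, imposing $\bar\epsilon \le \delta/16$ guarantees $d(y, y_\pm) \ge (\delta/8)\,d_0$ for every $y \in B_{2\epsilon d_0}(x)$, so the pointwise Laplacian comparison for $b_\pm$ yields in the barrier sense
\[
\Delta e(y) \;\le\; \frac{16(n-1)}{\delta\, d_0} + \psi_+(y) + \psi_-(y).
\]
I then invoke Corollary~\ref{Lap-mean} with $u = e$, the right-hand side above as $f$, scale $r = 2\epsilon d_0 \le 1$, and exponent $q = 2p$. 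The constant piece of $f$ already contributes $(2\epsilon d_0)^2 \cdot 16(n-1)/(\delta d_0) = C(n,\delta)\,\epsilon^2 d_0$, of the form claimed in the theorem.

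For the $\psi_\pm$ contribution I would enlarge the integration domain to $B_{17 d_0}(y_\pm) \supset B_{2\epsilon d_0}(x)$ (using $d(x,y_\pm) \le 16 d_0$), apply volume doubling (\ref{volume doubling}) to compare normalizations
\[
\|\psi_\pm\|^*_{2p,\, B_{2\epsilon d_0}(x)} \;\le\; C(n)\,\epsilon^{-n/(2p)}\,\|\psi_\pm\|^*_{2p,\, B_{17 d_0}(y_\pm)},
\]
and then apply the integral Laplacian comparison (\ref{Laplacian-com-average-norm}) at basepoint $y_\pm$ together with $k(y_\pm, p, 17 d_0) \le C(n,p)\,\varepsilon$ (deduced from $k(p,1) \le \varepsilon$ via (\ref{k-2-1}) and (\ref{k-1-2})) to obtain $\|\psi_\pm\|^*_{2p,\, B_{17 d_0}(y_\pm)} \le C(n,p)\,\varepsilon^{1/2}/d_0$. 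Reading $\|\psi_\pm\|^*_{2p, d_0}$ as this norm at the basepoint yields the first inequality of the theorem. Since $p > n/2$ gives $2 - n/(2p) > 1$, the total $\psi_\pm$ contribution is at most $C(n,p,\delta)\,\epsilon^{2-n/(2p)}\varepsilon^{1/2}\, d_0$, which is absorbed into $C\,\epsilon^2 d_0$ by choosing the smallness constant $\varepsilon = \varepsilon(n,p,\delta)$ small enough that $\varepsilon^{1/2}\bar\epsilon^{-n/(2p)} \le 1$; this produces the second inequality.

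For the pointwise conclusion I would argue by contradiction, using that $e = b_+ + b_-$ is $2$-Lipschitz. Under the hypothesis $e(x) \le \epsilon^2 d_0$, the averaged estimate gives $\int_{B_{\epsilon d_0}(x)} e \le C(n,p,\delta)\,\epsilon^2 d_0\,\vol B_{\epsilon d_0}(x)$. Suppose $e(y_0) > K \epsilon^{1+1/(n+1)} d_0 =: M$ for some $y_0 \in B_{\epsilon d_0/2}(x)$, with $K = K(n,p,\delta)$ to be chosen. By $2$-Lipschitzness, $e \ge M/2$ on $B_{M/4}(y_0)$, and shrinking $\bar\epsilon$ ensures $M/4 \le \epsilon d_0/2$, so $B_{M/4}(y_0) \subset B_{\epsilon d_0}(x)$. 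Volume doubling (\ref{volume doubling}) gives $\vol B_{M/4}(y_0) \ge c(n)(M/\epsilon d_0)^n\, \vol B_{\epsilon d_0}(x)$, hence
\[
\tfrac{M}{2}\cdot c(n)\Big(\tfrac{M}{\epsilon d_0}\Big)^n \;\le\; C(n,p,\delta)\,\epsilon^2 d_0,
\]
which simplifies to $M^{n+1} \le C(n,p,\delta)\,\epsilon^{n+2}\,d_0^{n+1}$, i.e., $M \le C(n,p,\delta)\,\epsilon^{1+1/(n+1)} d_0$. Choosing $K$ larger than this constant gives the contradiction, and the balance between Lipschitz spreading and $n$-dimensional volume growth is precisely what produces the exponent $1 + 1/(n+1)$.
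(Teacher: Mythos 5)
Your overall architecture is exactly the route the paper intends (its proof is a one-line citation of Corollary~\ref{Lap-mean} and the Laplacian comparison (\ref{Laplacian-com-average-norm})): the barrier inequality $\Delta e \le C(n)/(\delta d_0) + \psi_+ + \psi_-$ on $B_{2\epsilon d_0}(x)$, the $L^1$ Harnack at scale $r=2\epsilon d_0$, and the final Chebyshev argument balancing the $2$-Lipschitz spreading of $e$ against volume doubling, which correctly produces the exponent $1+\frac{1}{n+1}$, are all fine.

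The genuine gap is in the absorption of the $\psi_\pm$ term. Corollary~\ref{Lap-mean} hands you the normalized norm on the \emph{small} ball, $\|\psi_\pm\|^*_{2p, B_{2\epsilon d_0}(x)}$, and your conversion to the scale-$d_0$ norm at $y_\pm$ necessarily costs the doubling factor $\epsilon^{-n/(2p)}$; so what you actually prove is the first display with an extra factor $\epsilon^{-n/(2p)}$, not the display as stated. You then claim the resulting error $C\,\epsilon^{2-n/(2p)}\varepsilon^{1/2} d_0$ is $\le C\epsilon^2 d_0$ after choosing $\varepsilon^{1/2}\bar\epsilon^{-n/(2p)}\le 1$, but this inequality runs the wrong way: absorption requires $\varepsilon^{1/2}\le C\,\epsilon^{n/(2p)}$ for \emph{all} $0<\epsilon<\bar\epsilon$, and since $\epsilon$ may be arbitrarily small compared with the fixed curvature threshold $\varepsilon(n,p,\delta)$, no choice of $\varepsilon$ achieves this (knowing $\epsilon<\bar\epsilon$ gives $\epsilon^{n/(2p)}<\bar\epsilon^{n/(2p)}$, which is useless here). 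As written, your argument only yields $\fint_{B_{\epsilon d_0}(x)} e \le C\big[e(x)+\epsilon^{2-\frac{n}{2p}}\varepsilon^{1/2} d_0\big]$, and feeding that (with $e(x)\le \epsilon^2 d_0$) into your otherwise correct Lipschitz/doubling step gives the weaker exponent $1+\frac{1}{n+1}-\frac{n}{2p(n+1)}$ rather than $1+\frac{1}{n+1}$. To recover the statement as written you need either to keep the error in the form $C r^2\|\psi_\pm\|^*_{2p,B_r(x)}$ (i.e.\ interpret $\|\psi_\pm\|^*_{2p,d_0}$ at the small scale) or supply an additional argument controlling the concentration of $\psi_\pm$ on the tiny ball beyond what H\"older plus doubling give; the step as proposed does not close.
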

\begin{remark}
We obtain the optimal integral bound for the excess as in the pointwise Ricci lower bound case \cite[Theorem 2.6]{CN12}, compare \cite[Corollary 2.19]{TiZh13}, \cite[Lemma 4.9]{Zhang-Zhu15}. For the pointwise estimate, note that Abresch-Gromoll's original estimate gives $\epsilon^{1+ \frac{1}{n-1}}$ \cite{Abresch-Gromoll1990}.
\end{remark}

As in \cite{CN12}, one can extend Lemma~\ref{cut-off} to annulus so we have the cut-off function $\phi$ such that \[
\phi = 1  \ \mbox{in} \ A_{\frac{\delta}{4}, 8};  \ \phi = 0 \ \mbox{outside} \ A_{\frac{\delta}{16}, 16}  \]
and \[
|\nabla \phi|^2 + |\Delta \phi|  \le C(n,p, \delta). \]
Define the parabolic approximation functions ${\bf b}_{\pm, t}$ and ${\bf e}_t$ by
\[
{\bf b}_{\pm, t} (x)= \int H(x,y,t) \phi(y) b_{\pm} (y) dvol(y) \]
and
\[
{\bf e}_t  (x) =  \int H(x,y,t) \phi(y) e (y) dvol(y).  \] Then\[
{\bf e}_t   = {\bf b}_{+, t} + {\bf b}_{-, t}. \]
Following \cite{TiZh13}, we have following estimates for the approximates, which play important role in the Cheeger-Colding-Naber local theory for Gromov-Hausdorff limits.

\begin{theorem}
There exists $C =C(n,p, \delta)$ such that for all $0 < \epsilon \le \bar{\epsilon}(n,p,\delta)$,   any $x \in A_{\frac{\delta}{2}, 4}$ with $e(x) \le \epsilon^2 d_0$ and any $\epsilon$-geodesic $\sigma$ connecting $y_-, y_+$, there exists $r \in[\frac 12,2]$ with

1. $\left|{\bf b}_{\pm, r\epsilon^2 d_0^2} - b_{\pm} \right| \le  C d_0 ( \epsilon^2+ \epsilon^{2-\frac{n}{2p}})$.

2. $\fint_{B_{\epsilon d_0}(x)} \left| | \nabla {\bf b}_{\pm, r\epsilon^2 d_0^2} |^2 - 1 \right| \le C (\epsilon + \epsilon^{1-\frac{n}{2p}})$.

3. $\fint_{\delta d_0}^{(1-\delta)d_0}  \fint_{B_{\epsilon d_0}(\sigma(s))} \left| | \nabla {\bf b}_{\pm, r\epsilon^2 d_0^2} |^2 - 1 \right| \le C( \epsilon^2+ \epsilon^{2-\frac{n}{p}})$.

4. $\fint_{\delta d_0}^{(1-\delta)d_0}  \fint_{B_{\epsilon d_0}(\sigma(s))} \left| \Hess {\bf b}_{\pm, r\epsilon^2 d_0^2} \right|^2 \le \frac{C(1+\epsilon^{-\frac{n}{p}})}{d_0^2}.$
\end{theorem}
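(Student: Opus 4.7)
The strategy is to adapt the Colding--Naber heat regularization of the Busemann functions, following the noncollapsed treatment in \cite{TiZh13}, substituting for each pointwise use of $\Ric$ the integral counterparts now at our disposal: the normalized local Sobolev constant from Theorem~\ref{local-iso}, the mean value inequality of Proposition~\ref{p-mean}, the two-sided heat kernel bounds of Theorem~\ref{heat-bounds}, and the integral Laplacian comparison~(\ref{Laplacian-com-average-norm}). Throughout, we work on the annulus $A_{\delta/4, 8}$, where $\phi \equiv 1$ and $d(\cdot, y_\pm)$ is comparable to $d_0$, and write $u_t := {\bf b}_{\pm, t}$, so that $\partial_t u_t = \Delta u_t$ with $u_0 = \phi b_\pm$.

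For (1), start from the Duhamel representation
\[ u_t(x) - \phi(x) b_\pm(x) = \int_0^t \int_M H(x,y,s)\, \Delta(\phi b_\pm)(y)\, dvol(y)\, ds, \]
where $\Delta(\phi b_\pm)$ is understood distributionally. On $\mathrm{supp}(\phi)$ one has
\[ \Delta(\phi b_\pm) = \phi \Delta b_\pm + 2\langle \nabla \phi, \nabla b_\pm\rangle + b_\pm \Delta \phi \le C(\delta) d_0^{-1} + \psi_\pm, \]
using $\Delta b_\pm \le (n-1)/d(\cdot, y_\pm) + \psi_\pm$ and the bounds on $\phi$ from Lemma~\ref{cut-off} rescaled to the annulus. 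Combining the Gaussian heat kernel upper bound, H\"older's inequality and (\ref{Laplacian-com-average-norm}) turns the contribution of $\psi_\pm$ into at most $C(\delta) d_0\, \epsilon^{2-n/(2p)}$ when $t = r\epsilon^2 d_0^2$, giving the one-sided inequality $u_t - b_\pm \le C(\delta) d_0(\epsilon^2 + \epsilon^{2-n/(2p)})$. For the reverse inequality, apply the same Duhamel argument to the nonnegative $\phi e$, using $\Delta(\phi e) \le C(\delta) d_0^{-1} + \psi_+ + \psi_-$, to obtain ${\bf e}_t(x) \le e(x) + C(\delta) d_0(\epsilon^2 + \epsilon^{2-n/(2p)})$ for $x$ on the $\epsilon$-geodesic $\sigma$ (where $e(x) \le \epsilon^2 d_0$ by definition); since $({\bf b}_{+,t}-b_+) + ({\bf b}_{-,t}-b_-) = {\bf e}_t - e$, the matching lower bound on each $u_t - b_\pm$ follows from the one-sided upper bound on its partner. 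Averaging over $r \in [1/2, 2]$ selects the good $r$, which is fixed for parts (2)--(4).

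For (2) and (3), Bochner gives
\[ (\partial_t - \Delta)|\nabla u_t|^2 = -2|\Hess u_t|^2 - 2\Ric(\nabla u_t, \nabla u_t), \]
from which Theorem~\ref{gradient}, applied with the initial bound $|\nabla(\phi b_\pm)| \le 1 + C(\delta)$ on $\mathrm{supp}(\phi)$, yields $\|\nabla u_t\|_\infty^2 \le 1 + C(\delta)(\epsilon + \epsilon^{1-n/(2p)})$. For the $L^1$ average in (2), write
\[ |\nabla u_t|^2 - 1 = \tfrac12 \Delta(u_t^2 - b_\pm^2) - \bigl(u_t \Delta u_t - b_\pm \Delta b_\pm\bigr), \]
integrate against a cutoff $\chi$ equal to $1$ on $B_{\epsilon d_0}(x)$, and estimate each term: the Laplacian term is controlled by (1) applied to $u_t^2 - b_\pm^2$ after integration by parts moves $\Delta$ onto $\chi$; the $u_t \Delta u_t$ piece is handled by the $L^\infty$ gradient bound together with $\Delta u_t = \partial_t u_t$ evaluated from the Duhamel identity of (1); the $b_\pm \Delta b_\pm$ piece is controlled by (\ref{Laplacian-com-average-norm}). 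Part (3) follows by covering $\sigma$ by $\lesssim 1/\epsilon$ disjoint balls of radius $\epsilon d_0$ via the Vitali lemma and summing, the improved exponent $2 - n/p$ arising because one factor of $\epsilon$ is gained from the one-dimensional average along $\sigma$ (correspondingly passing from the $L^1$ to the $L^2$ norm of $\psi_\pm$).

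For (4), integrate the Bochner equality against a space-time cutoff $\eta$ equal to $1$ on a thin tube around $\sigma$ over a short time interval around $r\epsilon^2 d_0^2$:
\[ 2\iint \eta \,|\Hess u_t|^2 = -\iint \eta\, (\partial_t -\Delta)|\nabla u_t|^2 - 2\iint \eta\, \Ric(\nabla u_t, \nabla u_t). \]
Integration by parts in $t$ and $x$ converts the first term on the right into boundary contributions controlled by (3) at two nearby times $t_1, t_2$ together with the $L^\infty$ gradient bound; the Ricci term is handled by H\"older with $\|\Ric_-\|^*_p \le \varepsilon$ and the gradient bound, producing the $\epsilon^{-n/p}$ loss. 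The main obstacle throughout is to track the several error exponents produced by successive applications of the heat kernel upper bound, the Sobolev inequality and the $L^{2p}$ control of $\psi_\pm$, and to arrange them so that the final statement is genuinely independent of any local volume lower bound; it is precisely here that the normalized forms of (\ref{L2-Sobolev-r}), (\ref{Laplacian-com-average-norm}) and Proposition~\ref{p-mean} are decisive.
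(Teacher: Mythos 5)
The paper does not actually prove this theorem in full; it states that the estimates follow by adapting \cite{TiZh13} and \cite{CN12} to the integral-curvature setting and only proves a single illustrative lemma (the bound $\Delta\,{\bf b}_{\pm,t}\le C(d_0^{-1}+t^{-n/4p})$ coming from the heat-kernel upper bound, H\"older's inequality and the Laplacian comparison~(\ref{Laplacian-com-average-norm})). Your sketch for part~(1) is in the same spirit as the intended adaptation: the Duhamel representation, the distributional bound $\Delta(\phi b_\pm)\le Cd_0^{-1}+\psi_\pm$, and the Colding--Naber trick of playing ${\bf b}_{+,t}$ against ${\bf b}_{-,t}$ via $e=b_++b_-$ and $e(x)\le\epsilon^2 d_0$ are exactly the right moves, and the $\epsilon^{2-n/(2p)}$ error is correctly traced to the $t^{-n/(4p)}$ tail of $\|H(x,\cdot,t)\|_{2p/(2p-1)}$ via~(\ref{Laplacian-com-average-norm}).

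However, your argument for part~(2) has a genuine gap. You propose writing
\begin{equation*}
|\nabla u_t|^2-1=\tfrac12\Delta\bigl(u_t^2-b_\pm^2\bigr)-\bigl(u_t\Delta u_t-b_\pm\Delta b_\pm\bigr)
\end{equation*}
and moving $\Delta$ onto a cutoff $\chi$ supported in $B_{2\epsilon d_0}(x)$. But $|\Delta\chi|\sim(\epsilon d_0)^{-2}$ while part~(1) gives only $|u_t^2-b_\pm^2|\le|u_t-b_\pm|\,|u_t+b_\pm|\le C\,d_0^2(\epsilon^2+\epsilon^{2-n/2p})$, so the boundary term after integration by parts is of size $C(1+\epsilon^{-n/2p})$, which loses a full factor of $\epsilon$ against the claimed bound $C(\epsilon+\epsilon^{1-n/2p})$. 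Moreover the identity bounds only the signed average $\fint\chi(|\nabla u_t|^2-1)$, not the absolute value demanded by the statement. The argument that actually closes in the Colding--Naber scheme is different: one first proves a one-sided pointwise bound $|\nabla u_t|^2\le 1+\mathrm{err}$ by feeding the Bochner inequality $(\partial_t-\Delta)|\nabla u_t|^2\le 2|\Ric_-|\,|\nabla u_t|^2$ into the heat-kernel representation (the initial datum $|\nabla(\phi b_\pm)|^2$ equals $1$ where $\phi\equiv1$), so $(|\nabla u_t|^2-1)_+$ is pointwise small; and then derives a lower bound $\fint_{B_{\epsilon d_0}(x)}|\nabla u_t|\ge 1-\mathrm{err}$ directionally, by using the $C^0$ bound of part~(1) on pairs of points of the ball joined by geodesics and invoking the segment inequality (available under volume doubling). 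Combining the two yields the two-sided $L^1$ control. A related issue: invoking Theorem~\ref{gradient} to get $\|\nabla u_t\|_\infty^2\le 1+C(\epsilon+\epsilon^{1-n/2p})$ is not correct either, since that gradient estimate produces a bound of order $(\epsilon d_0)^{-2}\|u\|_2^{*2}$ with no mechanism for extracting the leading constant $1$; the $1+\mathrm{err}$ sup bound has to come from the heat-equation/Bochner mechanism just described, and in the integral-curvature setting that in turn leans on the mean value inequality (Proposition~\ref{p-mean}) with the Ricci term as the forcing $f$.

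Finally, your treatments of parts~(3) and~(4) are too schematic to check: covering $\sigma$ by $\sim 1/\epsilon$ balls and summing the estimate of part~(2) does not by itself improve $\epsilon$ to $\epsilon^2$ (a straightforward sum reproduces the order-$\epsilon$ bound), and the claim that "one factor of $\epsilon$ is gained from the one-dimensional average" needs to be substantiated by the actual telescoping/Fubini argument along the geodesic that Colding--Naber and Tian--Zhang use; and in part~(4) the space-time integration by parts of Bochner must be coupled with the precise one-sided bound of part~(2) at two time slices and the $L^p$ bound on $\Ric_-$ in the way \cite{TiZh13} carries out, which you only gesture at. You should either supply these details, replacing the $\Delta(u_t^2-b_\pm^2)$ decomposition in (2) with the sup-bound-plus-directional argument, or explicitly reduce to the corresponding steps of \cite{TiZh13}/\cite{CN12}, pointing to where the integral Laplacian comparison and heat-kernel bounds replace the pointwise ones (which is exactly what the paper does).
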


We will only show the first lemma here to indicate the difference.
\begin{lemma}
There exists a constant $C = C(n,p,\delta)$ such that
\[
\Delta {\bf b}_{\pm, t}, \Delta  {\bf e}_t  \le C \left( \frac{1}{d_0} +  t^{-\frac{n}{4p}} \right)
\]
for $t < 1$.
\end{lemma}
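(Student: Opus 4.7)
The plan is to exploit the symmetry $\Delta_x H(x,y,t) = \Delta_y H(x,y,t)$ and to move the Laplacian onto the compactly supported function $\phi\, b_\pm$ by integration by parts, giving the identity
\[
\Delta {\bf b}_{\pm,t}(x) \;=\; \int_M H(x,y,t)\,\Delta_y\bigl(\phi\, b_\pm\bigr)(y)\,dv(y),
\]
where $\Delta b_\pm$ is interpreted in the barrier/distributional sense of Petersen--Wei so that the Laplacian comparison $\Delta b_\pm \le \tfrac{n-1}{d(\cdot,y_\pm)} + \psi_\pm$ applies when tested against the smooth nonnegative weight $\phi H(x,\cdot,t)$. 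Expanding
\[
\Delta(\phi b_\pm) \;=\; \phi\,\Delta b_\pm + 2\langle\nabla\phi,\nabla b_\pm\rangle + b_\pm\,\Delta\phi
\]
reduces the problem to estimating four integrals against the (sub-)probability measure $H(x,\cdot,t)\,dv$.

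Three of these are immediate. On $\mathrm{supp}(\phi)$ one has $d(y,y_\pm)\ge(\delta/16)d_0$ and $|b_\pm|\le C d_0$, $|\nabla b_\pm|\le 1$; meanwhile the annular analogue of Lemma~\ref{cut-off} gives $|\nabla\phi|\le C(n,\delta)/d_0$ and $|\Delta\phi|\le C(n,p,\delta)/d_0^{2}$. Combined with $\int_M H(x,\cdot,t)\,dv\le 1$, each of these three pieces contributes at most $C(n,p,\delta)/d_0$.

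The entire difficulty is therefore in estimating
\[
I(x,t) \;:=\; \int_M H(x,y,t)\,\phi(y)\,\psi_\pm(y)\,dv(y).
\]
I would H\"older with exponents $\bigl(2p,\tfrac{2p}{2p-1}\bigr)$ on a ball $B = B_R(x)$ (any fixed $R\asymp 1$ containing $\mathrm{supp}(\phi)$). The Laplacian comparison (\ref{Laplacian-com}) combined with $k(x,p,R)\le C\varepsilon$ (Remark~\ref{all-r-small}) yields
\[
\|\psi_\pm\|_{2p,B} \;\le\; C(n,p)\,\varepsilon^{1/2}\,\vol(B)^{1/(2p)},
\]
while the Gaussian upper bound from Theorem~\ref{heat-bounds} together with $\int H\,dv\le 1$ gives
\[
\|H(x,\cdot,t)\|_{\frac{2p}{2p-1}}^{\frac{2p}{2p-1}} \;\le\; \|H\|_\infty^{\frac{1}{2p-1}}\!\int_M\! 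H\,dv \;\le\; \bigl(C/\vol B_{\sqrt t}(x)\bigr)^{\frac{1}{2p-1}}.
\]
Multiplying and invoking volume doubling (\ref{volume doubling}) in the form $\vol(B)/\vol B_{\sqrt t}(x)\le C\,t^{-n/2}$,
\[
I(x,t) \;\le\; C\,\varepsilon^{1/2}\bigl(\vol(B)/\vol B_{\sqrt t}(x)\bigr)^{1/(2p)} \;\le\; C(n,p,\delta)\,t^{-n/(4p)}.
\]
Summing the four contributions proves the bound for $\Delta{\bf b}_{\pm,t}$, and the statement for $\Delta{\bf e}_t$ follows by linearity. The main obstacle is the $t^{-n/(4p)}$ rate: in the noncollapsed setting a uniform lower bound on $\vol B_1(x)$ makes this essentially automatic, whereas in the collapsed case one must carefully pair the peak bound $\|H\|_\infty \sim \vol B_{\sqrt t}(x)^{-1}$ with the \emph{unnormalized} $L^{2p}$ control on $\psi_\pm$ (which carries the factor $\vol(B)^{1/(2p)}$), so that volume doubling alone supplies the compensating power $t^{-n/2}$ with precisely the right exponent.
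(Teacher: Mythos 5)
Your argument is correct and follows essentially the same route as the paper: move $\Delta_x$ to $\Delta_y$ by heat-kernel symmetry, integrate by parts onto $\phi b_\pm$, absorb the $\Delta\phi$, $\nabla\phi$ and $\tfrac{n-1}{d}$ terms into $C/d_0$, and bound the remaining $\int H\,\psi_\pm$ by H\"older with exponents $\bigl(2p,\tfrac{2p}{2p-1}\bigr)$, the Gaussian upper bound on $H$, the Laplacian comparison (\ref{Laplacian-com}), and volume doubling, yielding the $t^{-n/(4p)}$ term. You simply spell out the step the paper compresses into ``argue as in Proposition~\ref{p-mean}'' (pairing the unnormalized $L^{2p}$ bound on $\psi_\pm$ with $\|H\|_{\frac{2p}{2p-1}}\lesssim \vol B_{\sqrt t}(x)^{-1/(2p)}$), which is exactly how the paper's constant arises.
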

\begin{proof}
Since, for $x \in A_{\frac{\delta}{16}, 16}$,
\[
\Delta(\phi b_+) = b_+  \Delta \phi + 2 \lp \nabla \phi, \nabla b_+ \rp + \phi \Delta b_+ \le C d_0^{-1} + \psi_+,\]
we have
\ban
\Delta {\bf b}_{+, t} (x) & = & \int_{ A_{\frac{\delta}{16}, 16}}  \Delta_x H(x,y,t) \phi(y) b_{+}(y)  dvol(y)  \\
& = &  \int_{ A_{\frac{\delta}{16}, 16}}  \Delta_y H(x,y,t) \phi(y) b_{+}(y)  dvol(y)  \\
& = &  \int_{ A_{\frac{\delta}{16}, 16}}  H(x,y,t)  \Delta_y( \phi(y)  b_{+}(y) )  dvol(y)  \\
& \le & \frac{C}{d_0} + \int_{ A_{\frac{\delta}{16}, 16}}  H(x,y,t) \psi_+  dvol(y) .
\ean
Using the upper bound for $H(x,y,t)$ and argue as in Proposition~\ref{p-mean}, we have
\ban
 \int_{ A_{\frac{\delta}{16}, 16}}  H(x,y,t) \psi_+  dvol(y)  &  \le &  \|\psi_+\|_{2p} \| H(x,y,t)\|_{\frac{2p}{2p-1}} \\
& \le & C(n,p) \|\psi_+\|_{2p}^*   t^{-\frac{n}{4p}}.
\ean
These give the estimate; the other terms are exactly the same.

\end{proof}

\section{Necessity of smallness of integral Ricci}
\label{smallness}

By exploring Yang's counter-exmaple \cite{Ya92}, we point out that the smallness of integral Ricci curvature, (\ref{Lp Ricci: average}), is a critical condition in order to get the $L^p$ version of Cheeger-Colding theory.

For any $k> 1$, let $M=(-1,1)\times T^{n-1}$ be a portion of a complete manifold with a family of warped product metric
\begin{equation}
g_\epsilon=dr^2+(\epsilon^2+r^2)^{k}g_F
\end{equation}
where $T$ is a compact torus with flat metric $g_F$ and $\epsilon>0$ is the parameter. A direct calculation gives the sectional curvature
\begin{align*}
  K(\frac{\partial}{\partial x^i},\frac{\partial}{\partial x^j}) & = -k^2r^2(\epsilon^2+r^2)^{-2}, \\
  K(\frac{\partial}{\partial x^i},\frac{\partial}{\partial r}) & = -k(\epsilon^2+r^2)^{-1}-k(k-2)r^2(\epsilon^2+r^2)^{-2},
\end{align*}
and the Ricci curvature
\begin{align*}
  Ric(\frac{\partial}{\partial x^i},\frac{\partial}{\partial x^i}) & =-\big[(n-2)k^2r^2
  +k(k-2)r^2+k(\epsilon^2+r^2)\big](\epsilon^2+r^2)^{-2}, \\
  Ric(\frac{\partial}{\partial r},\frac{\partial}{\partial r}) & = -(n-1)\big[k(k-2)r^2+k(\epsilon^2+r^2)\big](\epsilon^2+r^2)^{-2},\\
  Ric(\frac{\partial}{\partial x^i},\frac{\partial}{\partial r}) & = 0,
\end{align*}
where $(x^1,\cdots,x^{n-1})$ is the local normal coordinate of $T^{n-1}$.

Fix one $p\in(\frac{n}{2},\frac{k(n-1)+1}{2})$. In the following calculation, $\approx$ means equivalence up to a multiplication by a constant depending only on $n$ and $p$. The first observation is
\begin{equation}
|Rm_{g_\epsilon}| \approx |Ric_{g_\epsilon}| \approx k^2r^2(\epsilon^2+r^2)^{-2}+k(\epsilon^2+r^2)^{-1}.
\end{equation}
Put
\begin{equation}
B^*_r=\{(t,x)\in M|-r<t<r\}.
\end{equation}
Then, for any function $f$,
$$\int_{B_r^*}fdv_{g_\epsilon}=\int_{-r}^r\int_Tf(x,t)(\epsilon^2+t^2)^{\frac{(n-1)k}{2}}dv_{g_F}dt.$$
Applying to the curvature function we have, whenever $\epsilon<<r$,
\begin{eqnarray}
\int_{B_r^*}|Rm_{g_\epsilon}|^p&\approx& k^{2p}\int_0^r\int_T(\epsilon^2+t^2)^{\frac{(n-1)k}{2}-p}dv_{g_F}dt\nonumber\\
&\approx& \frac{k^{2p}}{k(n-1)+1-2p}\vol(T)r^{k(n-1)+1-2p}\nonumber\\
&\approx& k^{2p-1}\vol(T)r^{k(n-1)+1-2p},
\end{eqnarray}
whenever $k>>1$ is sufficiently large. Then notice that
$$\vol(B_r^*)\approx \frac{1}{k(n-1)+1}\vol(T)r^{k(n-1)+1}\approx k^{-1}\vol(T)r^{k(n-1)+1}.$$
Thus we have, whenever $\epsilon<<r$,
\begin{equation}
r^2\bigg(\fint_{B_r^*}|Rm_{g_\epsilon}|^p\bigg)^{\frac{1}{2p}}\approx k^2.
\end{equation}
In particular, the formula remains hold on the limit space,
\begin{equation}
r^2\bigg(\fint_{B_r(o)}|Rm_{g_0}|^p\bigg)^{\frac{1}{2p}}\approx k^2,\,\forall r>0.
\end{equation}

The family of manifolds $(M,g_\epsilon)$ has polynomial volume growth. Hence a generalized local Sobolev constant and Poincar\'e constant are still bounded\footnote{It should be understood as the $[k(n-1)+1]$-Sobolev inequality, but not the $n$-Sobolev inequality we have proved. Actually, the calculation shows the n-Sobolev constant is not bounded in Yang's example.}. However, since the limit collapse at the origin, the volume comparison of geodesic spheres fails. Furthermore, to guarantee the splitting property on the tangent cones of the limit space we eventually need that
$$r^2\bigg(\fint_{B_r(o)}|Ric_{g_0}|^p\bigg)^{\frac{1}{2p}}\rightarrow 0$$
as $r\rightarrow 0$, which never hold. Therefore, the boundedness of $L^p$ norm of curvature tensor is not sufficient to extend Cheeger-Colding theory.

\end{document}